\newtheorem{theorem}{Theorem}[section]
\newtheorem{prop}[theorem]{Proposition}
\newtheorem{lemma}[theorem]{Lemma}
\newtheorem{cor}[theorem]{Corollary}
\newtheorem{cor*}{Corollary}
\newtheorem{prop*}{Proposition}
\theoremstyle{definition}
\newtheorem{definition}[theorem]{Definition}
\newtheorem{defn}[theorem]{Definition}
\newtheorem{rem}[theorem]{Remark}
\newtheorem{example}[theorem]{Example}
\newcommand{\Mod}{\mathrm{Mod}}
\newcommand{\Aut}{\mathrm{Aut}_k}
\newcommand{\LMod}{\mathrm{LMod}}
\newcommand{\SMod}{\mathrm{SMod}}
\newcommand{\Homeo}{\mathrm{Homeo}}
\newcommand{\lcm}{\mathrm{lcm}}
\newcommand{\lb}{\llbracket}
\newcommand{\rb}{\rrbracket}
\renewcommand{\P}{\mathcal{P}}
\newcommand{\Z}{\mathbb{Z}}
\renewcommand{\O}{\mathcal{O}}
\newcommand{\F}{\mathcal{F}}
\newcommand{\G}{\mathcal{G}}
\newcommand{\D}{\mathcal{D}}
\newcommand{\orb}{\mathrm{orb}}
\begin{document} 

\title[Split metacyclic actions on surfaces]{Split metacyclic actions on surfaces}

%    Information for author 1
\author[N. K. Dhanwani]{Neeraj K. Dhanwani}
\address{Department of Mathematics\\
Indian Institute of Science Education and Research Mohali\\
Knowledge city, Sector 81, Manauli, PO, Sahibzada Ajit Singh Nagar\\
Mohali 140306, Punjab\\
India}
\email{neerajk.dhanwani@gmail.com}

%    Information for author 2
\author[K. Rajeevsarathy]{Kashyap Rajeevsarathy}
\address{Department of Mathematics\\
Indian Institute of Science Education and Research Bhopal\\
Bhopal Bypass Road, Bhauri \\
Bhopal 462 066, Madhya Pradesh\\
India}
\email{kashyap@iiserb.ac.in}
\urladdr{https://home.iiserb.ac.in/$_{\widetilde{\phantom{n}}}$kashyap/}

%    Information for author 3
\author[A. Sanghi]{Apeksha Sanghi}
\address{Department of Mathematics\\
Indian Institute of Science Education and Research Bhopal\\
Bhopal Bypass Road, Bhauri \\
Bhopal 462 066, Madhya Pradesh\\
India}
\email{apeksha16@iiserb.ac.in}

\subjclass[2020]{Primary 57K20; Secondary 57M60}

\keywords{surface; mapping class; finite order maps; metacyclic subgroups}

\maketitle

\begin{abstract}
Let $\text{Mod}(S_g)$ be the mapping class group of the closed orientable surface $S_g$ of genus $g\geq 2$. In this paper, we derive necessary and sufficient conditions under which two torsion elements in $\Mod(S_g)$ will have conjugates that generate a finite split non-abelian metacyclic subgroup of $\Mod(S_g)$. As applications of the main result, we give a complete characterization of the finite dihedral and the generalized quaternionic subgroups of $\Mod(S_g)$ up to a certain equivalence that we will call weak conjugacy. Furthermore, we show that any finite-order mapping class whose corresponding orbifold is a sphere, has a conjugate that lifts under certain finite-sheeted regular cyclic covers of $S_g$. Moreover, for $g \geq 5$, we show the existence of an infinite dihedral subgroup of $\Mod(S_g)$ that is generated by an involution and a root of a bounding pair map of degree $3$. Finally, we provide a complete classification of the weak conjugacy classes of the non-abelian finite split metacyclic subgroups of $\Mod(S_3)$ and $\Mod(S_5)$. We also describe nontrivial geometric realizations of some of these actions.
\end{abstract}

\section{Introduction}
\label{sec:intro}
Let $S_g$ be the closed orientable surface of genus $g \geq 0$, $\Homeo^+(S_g)$ be the group of orientation-preserving homeomorphisms on $S_g$, and let $\Mod(S_g)$ be the mapping class group of $S_g$. Given $F,G \in \Mod(S_g)$ of finite order, a pair of conjugates $F',G'$ (of $F,G$ resp.) may (or may not) generate a subgroup isomorphic to $\langle F, G \rangle$.  For example, consider the periodic mapping classes $F,G \in \Mod(S_7)$ represented by homeomorphisms $\F,\G \in \Homeo^+(S_g)$ (see~\cite{PKS} for details), as shown in the first subfigure of Figure~\ref{fig:motivation} below. 
\begin{figure}[htbp]
\centering
	\labellist
	\tiny
	\pinlabel $\F$ at 43 186
	\pinlabel $\G_3$ at 171 170
	\pinlabel $\pi$ at 182 152
    \pinlabel $\frac{2\pi}{4}$ at 53 170
	\pinlabel $\F_2$ at 300 186
	\pinlabel $\frac{2\pi}{4}$ at 312 170
	\pinlabel $\frac{2\pi}{4}$ at 300 145
	\pinlabel $\frac{6\pi}{4}$ at 300 95
	\pinlabel $\frac{2\pi}{4}$ at 300 60
	\pinlabel $\frac{6\pi}{4}$ at 300 11
	\pinlabel $\G$ at 102 83
	\pinlabel $\pi$ at 90 72
	\pinlabel $\G_1$ at 251 84
	\pinlabel $\pi$ at 235 72
	\pinlabel $\G_2$ at 369 82
	\pinlabel $\pi$ at 358 73
	\pinlabel $\F_1$ at 205 160
     \pinlabel $\frac{2\pi}{4}$ at 152 123
     \pinlabel $\frac{6 \pi}{4}$ at 196 9
     \pinlabel $\frac{6 \pi}{4}$ at 149 36
     \pinlabel $\frac{2 \pi}{4}$ at 197 150
	\endlabellist
	\includegraphics[width=55ex]{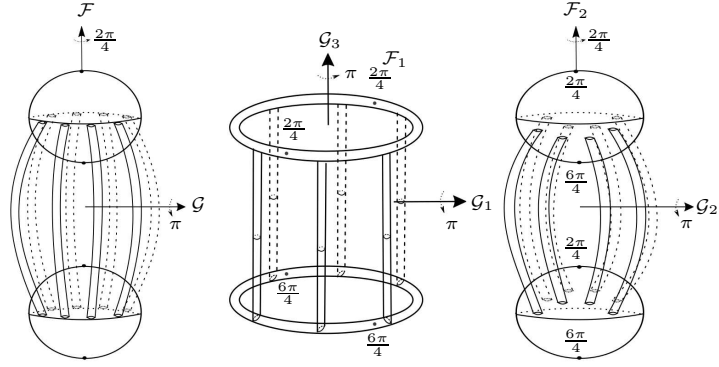}
	\caption{Split metacyclic subgroups of $\Mod(S_7)$ with conjugate generators.}
	\label{fig:motivation}
\end{figure}
\noindent From Figure~\ref{fig:motivation}, it is apparent that $\langle F, G \rangle \cong D_8$ (i.e. the dihedral group of order $8$). For $1 \leq i \leq 3$, we consider the conjugates $G_i$ of $G$, represented by the $\G_i \in \Homeo^+(S_7)$ and for $1 \leq j \leq 2$, we consider the conjugates $\F_j$ of $\F$ indicated in the (second and third) subfigures. In the second subfigure, we have marked the fixed points of a conjugate $\F_1$ of $\F$ (with the same local rotation angles as $\F$). Also, note that the third subfigure is different from the first (as an imbedding $S_7 \to \mathbb{R}^3$), since it has four pairs of tubes connecting the spheres, where in each pair, the tubes are aligned one behind the other. As it turns out, $\langle F_1 , G_1\rangle \cong \langle F_2 , G_2 \rangle \cong D_8$, but since $\F_1$  and $\G_3$ commute, we have $\langle F_1 , G_3 \rangle \cong \Z_4 \times \Z_2.$ Considering that the finite abelian subgroups of $\Mod(S_g)$ have been extensively studied~\cite{BW,DR,H1,M2}, this example motivates the following natural question:  Given $F',G' \in \Mod(S_g)$ of orders $n,m$ respectively, can one derive equivalent conditions under which there exist conjugates $F,G$ (of $F',G'$ resp.) such that $\langle F, G\rangle$ is a finite non-abelian \textit{split metacyclic subgroup of order $m\cdot n$ and twist factor $k$} admitting the presentation
$$\langle F,G \,|\, F^n = G^m = 1, G^{-1}FG = F^k \rangle  \cong \Z_n \rtimes_k \Z_m?$$ 
The main result in this paper answers this question in the affirmative for $k \neq 1$ (see Theorem~\ref{thm:main}). This result is a generalization of an analogous result from~\cite{DR} for two-generator finite abelian subgroups.

%Given a  finite split (non-abelian) metacyclic subgroup $H = \langle F, G \rangle$ of $\Mod(S_g)$ as above, the Nielsen realization theorem~\cite{SK,JN} asserts that we may also view $H$ as a subgroup of $\Homeo^+(S_g)$ with an \textit{associated} $H$-action on $S_g$ inducing the branched cover $S_g \to S_g /H$.  In order to establish the main result, we develop a notion of equivalence involving the finite split (non-abelian) metacyclic subgroups of $\Mod(S_g)$ called \textit{weak conjugacy}. Two isomorphic finite split metacyclic subgroups $H_1= \langle F_1,G_1 \rangle$ and $H_2 = \langle F_2,G_2 \rangle$ of $\Mod(S_g)$ are said to be \textit{weakly conjugate} if $(F_1,G_1)$ is pairwise conjugate to $(F_2,G_2)$, and the associated $H_i$-actions on $S_g$ satisfy a certain equivalence (see Definition~\ref{defn:weak_conj} for a rigorous definition). Even though this notion of equivalence of subgroups is (in general) weaker than conjugacy, it does provide us with an effective way of differentiating between isomorphic split metacyclic subgroups of $\Mod(S_g)$ by virtue of their topological realizations (i.e. by understanding how the associated topological actions on $S_g$ differ from each other.) 

Given a  finite split (non-abelian) metacyclic subgroup $H = \langle F, G \rangle$ of $\Mod(S_g)$ as above, the Nielsen realization theorem~\cite{SK,JN} asserts that we may also view $H$ as a subgroup of $\Homeo^+(S_g)$ with an \textit{associated} $H$-action on $S_g$ inducing the branched cover $S_g \to S_g /H$. Given a branched cover $S_g \to S_g/\langle \F \rangle (= X)$ and a $\bar{G} \in \Mod(X)$ that lifts under this cover to a $G \in \Mod(S_g)$, it follows from Birman-Hilden theory~\cite{JB,BH1,BH2,BH3} that there is an exact sequence: 
\[1 \to \langle F \rangle \to \langle F, G \rangle \to \langle \bar{G} \rangle \to 1. \tag{$\dagger$} \]
\noindent A key ingredient in the proof of the main result is the derivation of elementary number-theoretic conditions under which such a $\bar{G}$ will have a conjugate that lifts so that the sequence ($\dagger$) splits (see Section~\ref{sec:main}). The proof integrates ideas from the theory of group actions on surfaces~\cite{SK,M1} with elements of Thurston's orbifold theory~\cite[Chapter 13]{WT}. In view of the Nielsen realization theorem, consider representatives $\F,\G \in \Homeo^+(S_g)$ of $F,G \in \Mod(S_g)$ (resp.) with the same orders. Another crucial aspect of the proof (of the main result) is the analysis of the geometric properties of the automorphism $\bar{\G}$ induced by $\G$ in $S_g/\langle \F \rangle$.

In Section~\ref{sec:appl}, we provide several applications of our main theorem. The first application concerns the finite dihedral subgroups of $\Mod(S_g)$. Let $D_{2n} = \Z_n \rtimes_{-1} \Z_2$ be the dihedral group of order $2n$. We derive the following characterization of dihedral subgroups of $\Mod(S_g)$ in Subsection~\ref{subsec:dihed}.
\begin{prop*}
Let $F \in \Mod(S_g)$ be of order $n$. Then there exists an involution $G \in \Mod(S_g)$ such that $\langle F,G \rangle \cong D_{2n}$ if and only if $F$ and $F^{-1}$ are conjugate in $\Mod(S_g)$. 
\end{prop*}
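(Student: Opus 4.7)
The forward implication is immediate: if $\langle F,G\rangle \cong D_{2n}$ with $G^2 = 1$, the dihedral relation gives $GFG^{-1} = GFG = F^{-1}$, so $G$ itself conjugates $F$ to $F^{-1}$ in $\Mod(S_g)$.

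For the converse, the plan is to invoke Theorem~\ref{thm:main} with parameters $(m,k) = (2,-1)$, since $D_{2n} \cong \Z_n \rtimes_{-1} \Z_2$. I would first dispose of the small cases: for $n = 1$ any involution of $\Mod(S_g)$ works (e.g.\ a hyperelliptic involution, which exists for every $g \geq 2$), and for $n = 2$ the target $D_4 \cong \Z_2 \times \Z_2$ is abelian, so the existence of a commuting distinct involution follows from the two-generator finite abelian characterization in~\cite{DR} (the hypothesis $F \sim F^{-1}$ being automatic here). Henceforth assume $n \geq 3$.

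Fix any involution $F' \in \Mod(S_g)$ and apply Theorem~\ref{thm:main} to the pair $(F, F')$ with target group $\Z_n \rtimes_{-1} \Z_2$. The crux is translating the hypothesis $F \sim F^{-1}$ into the conditions of that theorem. Writing $\F \in \Homeo^+(S_g)$ for a finite-order realization of $F$ and $X = S_g/\langle \F\rangle$ for the quotient orbifold, the standard classification of conjugacy classes of finite-order mapping classes by orbifold signature data tells us that $F \sim F^{-1}$ is equivalent to the existence of an orientation-preserving orbifold homeomorphism of $X$ that inverts the local rotation angle at every cone point. This is precisely the compatibility required for the induced generator $\bar{G}$ in the sequence $(\dagger)$ to act on $\langle F\rangle$ by inversion, yielding twist factor $k = -1$. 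Theorem~\ref{thm:main} then produces conjugates $\tilde F = hFh^{-1}$ of $F$ and $\tilde G$ of $F'$ with $\langle \tilde F,\tilde G\rangle \cong D_{2n}$; setting $G := h^{-1}\tilde G h$ gives an involution with $\langle F, G\rangle = h^{-1}\langle \tilde F,\tilde G\rangle h \cong D_{2n}$, as required.

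The main obstacle is the splitting step, which I expect to be absorbed by Theorem~\ref{thm:main}: one must upgrade the orbifold symmetry above to an involution of $X$ (noting that its square preserves the signed signature exactly and can be cancelled against an orbifold automorphism), and then lift it to a homeomorphism of $S_g$ of order exactly $2$ rather than of order $2d$ for some $d\mid n$, so that the resulting subgroup is genuinely $D_{2n}$ and not a larger split metacyclic group. This is exactly the content of Theorem~\ref{thm:main} for $k = -1 \neq 1$, which is why the result can be read off as a direct application.
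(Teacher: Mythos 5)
Your forward implication is fine and matches the paper. The converse, however, has a genuine gap at exactly the point you flag and then wave away. Theorem~\ref{thm:main} does not take as input an orbifold symmetry of $X=S_g/\langle\F\rangle$ that inverts the local rotation data; it is an equivalence with the \emph{existence of a split metacyclic data set} $\D$ (Definition~\ref{defn:meta_cyc_dataset}) satisfying $\D_1=D_F$ and $\D_2=D_G$, and the whole content of the hard direction is to exhibit such a $\D$ and verify its conditions, in particular the surjectivity/splitting conditions (v)--(vi). An angle-inverting orbifold automorphism $\bar G$ only tells you that some extension $1\to\langle F\rangle\to H\to\Z_2\to 1$ with twist $-1$ may exist; it does not guarantee that the lift can be chosen of order exactly $2$. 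The generalized quaternion groups in the paper (Proposition~\ref{prop:quat_action}) are precisely the failure mode: $\Z_{2^n}$ is normal in $Q_{2^{n+1}}$ with the quotient involution acting by inversion on the orbifold level, yet every lift has order $4$, so the extension does not split. So the sentence ``this is exactly the content of Theorem~\ref{thm:main}'' is not a proof step; it is the statement of the problem.

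Concretely, the paper's proof does the work you skip: it writes down an explicit candidate
$\D=((2\cdot n,-1),0;[(1,2),(0,1),2],\ldots,[(1,2),(c_{(t-1)2},n_{(t-1)2}),2],[(1,2),(c_{t2},n_{t2}),2],[(0,1),(c_1,n_1),n_1],\ldots)$,
with the two correction tuples chosen (differently according to whether $g_0=0$ or $g_0>0$) so that the long-relation condition (iv) holds, and then verifies condition (v) by exhibiting explicit vectors $(p_1,\ldots)$ and $(q_1,\ldots)$, using $\lcm(n_1,\ldots,n_s)=n$ when $g_0=0$. Only after that does Theorem~\ref{thm:main} apply. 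Note also that your ``fix any involution $F'$'' is backwards: the theorem requires $\D_2=D_{F'}$ for that particular conjugacy class, and an arbitrary involution class need not pair with $F$ (the involution produced by the paper has a specific data set dictated by the construction). Finally, the hypothesis you should be converting is that $F\sim F^{-1}$ forces $D_F$ to be negation-symmetric, i.e.\ of the paired form $(n,g_0,r;(c_1,n_1),(-c_1,n_1),\ldots)$; that combinatorial normal form, not a homeomorphism of $X$, is what feeds the construction of $\D$. Without constructing $\D$ and checking Definition~\ref{defn:meta_cyc_dataset}(i)--(vi), the splitting step remains unproved.
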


\noindent It is worth mentioning here that dihedral actions on Riemann surfaces have been classified in~\cite{BCGG}. 

For $n \geq 2$, the generalized quaternion group $Q_{2^{n+1}}$ is a metacyclic group of order $2^{n+1} $ that admits the presentation
$$\langle F,G \, | \, F^{2^n} = G^4 =1, \,  F^{2^{n-1}} = G^2, \, G^{-1}FG = F^{-1}\rangle.$$ 
\noindent In Subsection~\ref{subsec:quater}, we obtain the following characterization of generalized quaternionic actions on $S_g$ (see Proposition~\ref{prop:quat_action}). 
\begin{prop*}
For $g \geq 2$, $F \in \Mod(S_g)$ be of order $2^n$. Then there exists a $G \in \Mod(S_g)$ such that 
$\langle F,G \rangle \cong Q_{2^{n+1}}$ if and only if the $\langle \F, \G \rangle$-action on $S_g$ lifts to a $(\langle \tilde{\F} , \tilde{\G} \rangle \cong) \, \Z_{2^n} \rtimes_{-1} \Z_4$-action on $S_{2g-1}$ under the $2$-sheeted regular cyclic cover $S_{2g-1} \to S_g$ with deck  transformation group $\langle \tilde{\G}^2 \tilde{\F}^{2^{n-1}} \rangle$.
\end{prop*}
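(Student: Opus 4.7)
The crux of the proof is the algebraic identification $(\Z_{2^n}\rtimes_{-1}\Z_4)/\langle z\rangle \cong Q_{2^{n+1}}$, where $z := \tilde{\G}^2\tilde{\F}^{2^{n-1}}$ is a central element of order~$2$: the relation $z=1$, i.e.\ $\tilde{\G}^2=\tilde{\F}^{2^{n-1}}$ (using $\tilde{\F}^{-2^{n-1}}=\tilde{\F}^{2^{n-1}}$), is precisely the extra identification that converts the split metacyclic presentation into the generalized quaternion one. The sufficient direction then follows directly: using $\tilde{\G}\tilde{\F}\tilde{\G}^{-1}=\tilde{\F}^{-1}$ one verifies that $z$ commutes with $\tilde{\F},\tilde{\G}$ and satisfies $z^2=1$, so the normal deck subgroup $\langle z\rangle$ acts freely; passing to the quotient gives a $Q_{2^{n+1}}$-action on $S_{2g-1}/\langle z\rangle\cong S_g$, and the images of $\tilde{\F},\tilde{\G}$ in $\Mod(S_g)$ furnish the required $F,G$.

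For the necessary direction, realize $\langle F,G\rangle\cong Q_{2^{n+1}}$ by $\F,\G\in\Homeo^+(S_g)$ via Nielsen realization and set $\O=S_g/\langle\F,\G\rangle$, with defining surjection $\rho\colon\pi_1^{\mathrm{orb}}(\O)\twoheadrightarrow Q_{2^{n+1}}$ having kernel $\pi_1(S_g)$. I would reduce the task to producing a surjection $\tilde{\rho}\colon\pi_1^{\mathrm{orb}}(\O)\twoheadrightarrow\Z_{2^n}\rtimes_{-1}\Z_4$ that lifts $\rho$ along the central quotient and has torsion-free kernel; given such a $\tilde{\rho}$, the associated regular orbifold cover is a closed surface of Euler characteristic $2\chi(S_g)$ by Riemann-Hurwitz, hence $S_{2g-1}$, and the induced unbranched double cover $S_{2g-1}\to S_g$ has deck group precisely $\langle z\rangle$, as required.

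To construct $\tilde{\rho}$, I use the standard orbifold presentation of $\pi_1^{\mathrm{orb}}(\O)$ with $g'$ surface pairs $a_i,b_i$, cone-point generators $x_1,\dots,x_r$ of orders $m_j$ (each a power of $2$, since all cyclic subgroups of $Q_{2^{n+1}}$ have $2$-power order), and the single relation $\prod_i[a_i,b_i]\prod_j x_j=1$. Every cyclic subgroup of $Q_{2^{n+1}}$ is conjugate to $\langle F^{2^k}\rangle$ or to some $\langle F^\ell G\rangle$ of order $4$; these lift to same-order cyclic subgroups $\langle\tilde{\F}^{2^k}\rangle$ and $\langle\tilde{\F}^\ell\tilde{\G}\rangle$ in $\Z_{2^n}\rtimes_{-1}\Z_4$ (the latter being of order $4$ since $(\tilde{\F}^\ell\tilde{\G})^2=\tilde{\G}^2\neq 1$). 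Thus each $\tilde{x}_j$ can be chosen of order exactly $m_j$, guaranteeing torsion-freeness of $\ker\tilde{\rho}$. After lifting $a_i,b_i$ arbitrarily, the residual obstruction $w:=\tilde{\rho}(\prod_i[a_i,b_i]\prod_j x_j)\in\langle z\rangle$; if $r\geq 1$ and $w=z$, I replace some $\tilde{x}_j$ by $\tilde{x}_j z$ --- of the same order since $m_j$ is even --- absorbing the excess factor of $z$ by centrality.

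The main obstacle is the free action case $r=0$, where no cone-point lift is available to correct the obstruction. I would handle it by noting that the derived subgroup of $\Z_{2^n}\rtimes_{-1}\Z_4$ is $\langle\tilde{\F}^2\rangle$, so $w=\prod_i[\tilde{a}_i,\tilde{b}_i]\in\langle\tilde{\F}^2\rangle\cap\langle z\rangle$; this intersection is trivial because $z=\tilde{\G}^2\tilde{\F}^{2^{n-1}}\notin\langle\tilde{\F}\rangle$ (it has a nontrivial $\tilde{\G}$-component), so $w=1$ automatically.
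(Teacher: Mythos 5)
Your proposal follows essentially the same route as the paper: both exploit the central extension $1\to\langle z\rangle\to\Z_{2^n}\rtimes_{-1}\Z_4\xrightarrow{q}Q_{2^{n+1}}\to 1$ with $z=\tilde{\G}^{2}\tilde{\F}^{2^{n-1}}$, lift the surface-kernel epimorphism of the $Q_{2^{n+1}}$-action through $q$ by choosing order-preserving preimages of the cone generators (the two choices differing by the central $z$), and use that $\langle z\rangle$ then acts freely to get the unbranched double cover $S_{2g-1}\to S_g$, the converse being composition with $q$ exactly as in the paper's proof of Proposition~\ref{prop:quat_action}. The only point left tacit (in the paper as well) is surjectivity of the lifted homomorphism, but it is automatic: a subgroup of $\Z_{2^n}\rtimes_{-1}\Z_4$ mapping onto $Q_{2^{n+1}}$ while missing $z$ would force $\tilde{\G}^{2}=\tilde{\F}^{2^{n-1}}$, which fails in the split group; granting this, your handling of the lifting obstruction, including the cone-point-free case via the commutator subgroup, is sound.
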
 

For a periodic mapping class $F \in \Mod(S_g)$, the corresponding orbifold $\O_{\langle \F  \rangle} := S_g/\langle \F  \rangle \approx S_{g_0,r}$, where $S_{g_0,r}$ is the surface of genus $g_0 \geq 0$ with $r \geq 0$ marked points. It is known~\cite{G1} that $F$ is irreducible if and only if $\O_{\langle \F \rangle} \approx S_{0,3}$. In Subsection~\ref{subsec:liftable}, we provide a characterization of the split metacyclic subgroups $\langle F,G \rangle$ of $\Mod(S_g)$ when $F$ is irreducible (see Corollary~\ref{cor:irred_F}). Let $\LMod_p(S_g)$ (resp. $\SMod_p(S_{n(g-1)+1})$) be the liftable (resp. symmetric) mapping class groups of a finite $n$-sheeted regular cyclic cover $p : S_{n(g-1)+1} \rightarrow S_g$. In this context, we have the following result. 

\begin{prop*}
For $g,n \geq 2$, let $p : S_{n(g-1)+1} \rightarrow S_g$ be a regular cover with deck transformation group $\Z_n= \langle \F \rangle$. Then any involution $G' \in \Mod(S_g)$ has a conjugate $G \in \LMod_p(S_g)$ with a lift $\tilde{G} \in \SMod_p(S_{n(g-1)+1})$ such that $\langle F, \tilde{G} \rangle \cong D_{2n}$.
\end{prop*}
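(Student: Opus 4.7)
The plan is to convert the lifting requirement into an eigenspace condition on $H^1(S_g; \Z_n)$, and then exploit the transitivity of $\Mod(S_g)$ on primitive classes.

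By Nielsen realization, I represent $G'$ by an involution $\G' \in \Homeo^+(S_g)$ and encode $p$ by a primitive class $\alpha \in H^1(S_g; \Z_n)$. A direct calculation shows that a conjugate $G = H^{-1}G'H$ lies in $\LMod_p(S_g)$ with a lift $\tilde G \in \SMod_p(S_{n(g-1)+1})$ satisfying $\tilde G F \tilde G^{-1} = F^{-1}$ if and only if the twisted class $\beta := \alpha \circ H_*^{-1}$ lies in the $(-1)$-eigenspace $V_- \subset H^1(S_g; \Z_n)$ of $\G'^*$. Applying the Lefschetz fixed-point formula to the orientation-preserving involution $\G'$ gives $\mathrm{tr}(\G'^*|_{H^1(S_g; \mathbb{Q})}) \leq 2$, so $\dim_{\mathbb{Q}} V_-^{\mathbb{Q}} \geq g - 1 \geq 1$. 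The integral lattice $V_-^\Z := \ker(\G'^* + \mathrm{id}) \subset H^1(S_g; \Z)$ is saturated, whence a primitive element of $V_-^\Z$ remains primitive in $H^1(S_g; \Z)$; its mod-$n$ reduction provides the desired primitive class in $V_-$. As $\Mod(S_g)$ acts transitively on primitive classes in $H^1(S_g; \Z_n)$ via the surjection to $\mathrm{Sp}(2g, \Z_n)$, a suitable $H$ exists.

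To conclude $\langle F, \tilde G \rangle \cong D_{2n}$, I must ensure that $\tilde G^2 = 1$ in $\Mod(S_{n(g-1)+1})$, rather than landing in a generalized dicyclic extension. Writing $\tilde G^2 = F^j$, the relation $\tilde G F \tilde G^{-1} = F^{-1}$ forces $2j \equiv 0 \pmod n$, so $j \in \{0, n/2\}$, with $j = 0$ automatic when $n$ is odd. For $n$ even, I would use the following geometric argument: if $\G$ (the Nielsen realization of $G$) has a fixed point $x \in S_g$, then for any preimage $\tilde x \in p^{-1}(x)$ one has $\tilde \G(\tilde x) = \F^k \tilde x$ for some $k$, and the relation $\tilde \G \F^k = \F^{-k} \tilde \G$ gives $\tilde \G^2(\tilde x) = \tilde x$, so the freeness of $\F$ forces $\tilde \G^2 = \mathrm{id}$. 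Since $S_g$ admits free involutions only for $g$ odd, the sole residual subcase is $n$ even with $G'$ free on odd-genus $S_g$; here $\dim V_-^{\mathbb{Q}} \geq g - 1 \geq 2$ provides enough freedom in choosing $\beta$ to kill the $H^2(\Z_2; \Z_n) \cong \Z_2$ obstruction separating the dihedral from the dicyclic extension, and the required $D_{2n}$-realization is furnished by Theorem~\ref{thm:main} applied to $(F, \tilde G) \in \Mod(S_{n(g-1)+1})$.

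The principal obstacle is this residual subcase, where the lifting criterion alone does not distinguish $D_{2n}$ from the generalized dicyclic extension. Controlling the interplay between the homological condition on $\beta$ and the extension class in $H^2(\Z_2; \Z_n)$ is precisely what the main theorem is designed to do.
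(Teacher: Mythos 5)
Your cohomological route — encoding $p$ by a primitive class $\alpha \in H^1(S_g;\Z_n)$, characterizing the desired conjugate by the condition $\beta \in \ker(\G'^*+\mathrm{id})$, producing such a primitive $\beta$ via Lefschetz plus saturation of the integral $(-1)$-eigenlattice, moving $\beta$ to $\alpha$ by surjectivity of $\Mod(S_g)\to\mathrm{Sp}(2g,\Z_n)$, and then forcing $\tilde\G^2=\mathrm{id}$ by the fixed-point argument — is genuinely different from the paper, which instead writes down explicit dihedral data sets matching $D_{G'}=(2,g_0;((1,2),t))$ and verifies conditions (i)--(vi) of Definition~\ref{defn:meta_cyc_dataset} (the free case being handled by the empty data set $((2\cdot n,-1),g_0;)$ with $g_0\geq 2$). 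Your argument is correct and complete whenever $n$ is odd or the involution $\G$ has a fixed point, modulo the routine remark that the finite group $\langle \F,\tilde\G\rangle<\Homeo^+(S_{n(g-1)+1})$ injects into $\Mod(S_{n(g-1)+1})$.

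However, the residual case you isolate ($G'$ a free involution, so $g$ odd, and $n$ even — e.g.\ a free involution on $S_3$ with $n=2$) is a genuine gap, not a removable technicality. Since every lift of $\G$ has the same square ($(\F^j\tilde\G)^2=\tilde\G^2$), the extension class in $H^2(\Z_2;\Z_n)\cong\Z_2$ can only be altered by changing $\beta$, and you give no mechanism relating the choice of a primitive $\beta$ in $V_-$ to that class; the assertion that $\dim V_-^{\mathbb{Q}}\geq 2$ ``provides enough freedom to kill the obstruction'' is exactly the statement to be proved. The fallback appeal to Theorem~\ref{thm:main} is circular: to invoke it you must exhibit a dihedral data set of genus $n(g-1)+1$ whose cyclic factors are the free $\Z_n$-action and the involution, which is precisely the content of the paper's proof and is nowhere carried out in your argument. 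The gap can be closed without the data-set machinery: writing $S_g/\G'\approx S_{g_0}$ with defining epimorphism $\pi:\pi_1(S_{g_0})\to\Z_2$ and $g_0\geq 2$, the map sending $a_1\mapsto$ a reflection, $a_2\mapsto$ a rotation of order $n$, and all other standard generators to $1$ is a surjection $\pi_1(S_{g_0})\to D_{2n}$ lifting $\pi$; its restriction to $\pi_1(S_g)=\ker\pi$ is a primitive class $\beta\in V_-$ for which the resulting extension is split, and this is the missing step your proposal needs.
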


\noindent Moreover, we provide  sufficient conditions for the liftability of a periodic mapping class (under $p$) whose corresponding orbifold is a sphere (see Propositions~\ref{prop:lift_sph_ordern_cpt} - \ref{prop:lift_sph_no_ordern_cpt}). As a consequence, we obtain the following corollary.
\begin{cor*}
For $g\geq 2$ and prime $n$, let $p : S_{n(g-1)+1} \rightarrow S_g$ be a regular $n$-sheeted cover with deck transformation group $\langle \F \rangle \cong \Z_n$. Let $G' \in \Mod(S_g)$ be of order $m$ such that the genus of $\O_{\langle \G' \rangle}$ is zero. Then $G'$ has a conjugate $G \in \LMod_{p}(S_g)$ with a lift  $\tilde{G} \in \SMod_p(S_{n(g-1)+1})$ such that $\langle F, \tilde{G} \rangle \cong \Z_n \rtimes_k \Z_m$ if there exists $k \in \Z_n^{\times}$ such that $|k| =m$.
\end{cor*}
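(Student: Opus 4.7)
The plan is to combine the liftability results in Propositions~\ref{prop:lift_sph_ordern_cpt}--\ref{prop:lift_sph_no_ordern_cpt} with the main theorem, Theorem~\ref{thm:main}. Since $n$ is prime, $\Z_n^{\times}$ is cyclic of order $n-1$, so the hypothesis $|k| = m$ forces $m \mid n-1$; in particular $\gcd(m,n) = 1$. This coprimality, together with the fact that the isomorphism type of $\Z_n \rtimes_k \Z_m$ depends only on $|k|$ when $n$ is prime, will be the main algebraic lever.

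I would first apply the appropriate proposition from~\ref{prop:lift_sph_ordern_cpt}--\ref{prop:lift_sph_no_ordern_cpt} to the genus-zero-quotient $G'$ to produce a conjugate $G \in \LMod_p(S_g)$ together with a lift $\tilde G \in \SMod_p(S_{n(g-1)+1})$. The Birman--Hilden sequence~$(\dagger)$ then reads
\[
1 \to \langle F \rangle \to \langle F, \tilde G \rangle \to \langle G \rangle \to 1,
\]
so $\langle F, \tilde G \rangle$ is an extension of order $nm$.

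The bulk of the remaining work is to identify this extension with $\Z_n \rtimes_k \Z_m$. My approach is to invoke Theorem~\ref{thm:main} for the pair $(F, \tilde G)$ in $\Mod(S_{n(g-1)+1})$: its number-theoretic hypothesis is precisely the assumed existence of $k \in \Z_n^{\times}$ with $|k| = m$, while its orbifold-theoretic hypotheses should be satisfied thanks to the free $\Z_n$-action of $\F$ on $S_{n(g-1)+1}$ (which has quotient $S_g$ by Riemann--Hurwitz) together with the sphere orbifold of $\G$. Verifying the precise orbifold conditions of Theorem~\ref{thm:main} under these constraints, and in particular ruling out the abelian case in which $\tilde G$ commutes with $F$, is where I expect the main work to lie. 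Once this is done, Theorem~\ref{thm:main} yields conjugates $F_1$ of $F$ and $\tilde G_1$ of $\tilde G$ with $\langle F_1, \tilde G_1 \rangle \cong \Z_n \rtimes_k \Z_m$.

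The final step is to transfer this realization back to the original deck transformation $F$. Because $\langle F_1 \rangle$ is the (unique) normal Sylow $n$-subgroup of $\langle F_1, \tilde G_1 \rangle$, any conjugating element $\gamma$ with $\gamma^{-1} F_1 \gamma = F$ lies in the normalizer of $\langle F \rangle$ in $\Mod(S_{n(g-1)+1})$, which coincides with $\SMod_p(S_{n(g-1)+1})$. Hence $\gamma^{-1} \tilde G_1 \gamma \in \SMod_p$ projects to a conjugate of $G'$ in $\LMod_p(S_g)$ and, together with $F$, generates the required $\Z_n \rtimes_k \Z_m$.
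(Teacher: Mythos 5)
Your proposal has the logical order inverted, and in doing so it skips the step that is the entire content of the corollary. Propositions~\ref{prop:lift_sph_ordern_cpt} and~\ref{prop:lift_sph_no_ordern_cpt} are conditional statements: they already deliver a conjugate $G \in \LMod_p(S_g)$ with a lift $\tilde G$ satisfying $\langle F, \tilde G\rangle \cong \Z_n \rtimes_k \Z_m$, but only \emph{provided} their hypotheses hold, namely the existence of integers $a_1,\ldots$ satisfying the congruence $\sum_i a_i (k^{c_i m/m_i}-1)\prod_{s>i} k^{c_s m/m_s} \equiv 0 \pmod n$ together with the lcm condition (in Prop.~\ref{prop:lift_sph_ordern_cpt}) or condition (v)(b) of Definition~\ref{defn:meta_cyc_dataset} (in Prop.~\ref{prop:lift_sph_no_ordern_cpt}). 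You ``apply the appropriate proposition'' at the outset without verifying any of this, and then defer the verification to a later invocation of Theorem~\ref{thm:main}, whose hypothesis you misstate: it is not the existence of $k$ with $|k|=m$, but the existence of a full split metacyclic data set $\D$ with $\D_1 = D_F$ and $\D_2 = D_G$ --- precisely the kind of explicit construction you never carry out. The paper's proof consists exactly of this verification: writing $D_{G'} = (m,0;(c_1,m_1),\ldots,(c_\ell,m_\ell))$, choosing the explicit tuple $(a_1,\ldots,a_{\ell-1}) = (0,\ldots,0,1,-(k^{c_{\ell-2}m/m_{\ell-2}}-1)k^{c_{\ell-1}m/m_{\ell-1}}(k^{c_{\ell-1}m/m_{\ell-1}}-1)^{-1})$ (and the analogous choice in the second case), and using that $n$ is prime and $|k|=m$ to see that the relevant quantities $k^{c_i m/m_i}-1$ are units mod $n$, which simultaneously makes the inverse well defined, the congruence hold, and the lcm condition satisfied. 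None of this appears in your write-up; saying ``this is where I expect the main work to lie'' is an acknowledgement of the gap, not a proof.

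The remaining steps you add are superfluous and partly circular: once the hypotheses of Propositions~\ref{prop:lift_sph_ordern_cpt}--\ref{prop:lift_sph_no_ordern_cpt} are checked, their conclusion already involves the fixed deck transformation $F$, so there is nothing to ``transfer back'' and no need for the normalizer argument (whose identification of the normalizer of $\langle F\rangle$ with $\SMod_p(S_{n(g-1)+1})$ is in any case asserted rather than justified from anything in the paper). The useful observations you do make --- that $m \mid n-1$ so $\gcd(m,n)=1$, and that $\Z_n$ is a field for $n$ prime --- are indeed the levers the paper uses, but they must be applied to produce the explicit coefficients $a_i$ rather than invoked abstractly.
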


Consider an \textit{infinite metacyclic group}~\cite{CEH} that admits a presentation of the form 
$$\langle x,y \,|\, y^{2m} = 1, y^{-1}xy = x^{-1} \rangle.$$ 
When $m = 1,$ we call such a group an \textit{infinite dihedral group.} By a \textit{root of a mapping class $F \in\Mod(S_g)$ of degree $n$}, we mean a $G \in \Mod(S_g)$ such that $G^n = F$. In Subsection~\ref{subsec:inf_meta}, we use the theory developed in~\cite{KR2,KP}, to construct roots of multitwists (i.e. products of powers of commuting Dehn twists) in $\Mod(S_g)$ which together with certain mapping classes of order $2m$ generate infinite split metacyclic subgroups of $\Mod(S_g)$ (of the form described above) for $g \geq 5$ (see Proposition~\ref{prop:inf_split_meta}). In particular, for $m = 1$, we have the following corollary. 
\begin{cor*}
For $g \geq 5$, there exists an infinite dihedral subgroup of $\Mod(S_g)$ that is generated by an involution and a root of a bounding pair map of degree $3$.
\end{cor*}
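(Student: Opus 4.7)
The plan is to specialize Proposition~\ref{prop:inf_split_meta} to the case $m = 1$. The presentation
$$\langle x, y \mid y^{2} = 1,\ yxy^{-1} = x^{-1}\rangle$$
is, by definition, that of the infinite dihedral group, so the corollary reduces to exhibiting a single instance of the construction in that proposition in which the multitwist is a bounding pair map and the root $F$ has degree exactly $3$. No new group-theoretic input is required; only the correct geometric choices need to be fed into the statement of Proposition~\ref{prop:inf_split_meta}.

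Concretely, for $g \geq 5$, I would fix a bounding pair $\{a,b\}$ on $S_g$ and embed $S_g$ in $\mathbb{R}^3$ so that a tubular neighborhood of $a \cup b$ together with one of the complementary subsurfaces admits a homeomorphism $\F$ of order $3$ whose mapping class $F$ satisfies $F^{3} = T_a T_b^{-1}$. The existence of such a degree-$3$ root of the bounding pair map is guaranteed by the root-construction machinery of~\cite{KR2,KP}; the hypothesis $g \geq 5$ ensures the complementary subsurface carries enough topology to support the required $\Z_3$-symmetric piece. Next, I would produce an orientation-preserving involution $\G$ of $S_g$, realized in the same model as a reflection about a plane transverse to the rotation axis of $\F$, so that $\G(a) = b$, $\G(b) = a$, and $\G\F\G^{-1} = \F^{-1}$. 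Passing to mapping classes,
$$G(T_aT_b^{-1})G^{-1} = T_b T_a^{-1} = (T_aT_b^{-1})^{-1}, \qquad GFG^{-1} = F^{-1}.$$
Since $T_aT_b^{-1}$ has infinite order in $\Mod(S_g)$, so does $F$, and the relations $G^{2}=1$ and $GFG^{-1}=F^{-1}$ then force $\langle F, G\rangle$ to be isomorphic to the infinite dihedral group.

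The main obstacle is this last geometric compatibility. An involution that merely swaps $a$ and $b$ inverts $F^{3}$ automatically, but forcing the same involution to conjugate the specific cube root $F$ to $F^{-1}$—rather than to some other cube root of $F^{-3}$—requires the reflection plane and the rotation axis to be positioned so that the axis is reversed by the reflection. Arranging this placement is exactly what the proof of Proposition~\ref{prop:inf_split_meta} does for general $m$; specializing its verification to $m=1$ with the multitwist $T_aT_b^{-1}$ yields the desired infinite dihedral subgroup of $\Mod(S_g)$.
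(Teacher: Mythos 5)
Your overall strategy is the paper's: the corollary is obtained by specializing Proposition~\ref{prop:inf_split_meta} to $m=1$, $n=3$ (the explicit model being Example~\ref{eg:inf_dihed}), and your closing group-theoretic step --- $F$ of infinite order, $G^2=1$, $GFG^{-1}=F^{-1}$ forces $\langle F,G\rangle$ to be infinite dihedral --- is fine. However, the independent geometric justification you interpose contains two genuine errors. First, there is no ``homeomorphism $\F$ of order $3$ whose mapping class $F$ satisfies $F^3=T_aT_b^{-1}$'': a finite-order homeomorphism represents a finite-order mapping class, while $T_aT_b^{-1}$ has infinite order. In the actual construction the order-$3$ behavior lives only on the complementary pieces; the root is assembled by gluing two periodic pieces along two annuli carrying $\pm 1/3$ fractional twists (as in Example~\ref{eg:inf_dihed}), so $\F$ itself has infinite order and is not a rigid rotation --- in particular the ``rotation axis of $\F$'' on which your placement argument rests is only heuristic. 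Second, an involution ``realized as a reflection about a plane'' is orientation-reversing on the embedded surface (its fixed-point set is one-dimensional), hence does not give an element of $\Mod(S_g)$ as used here; the involution in the paper's construction is the hyperelliptic involution, i.e.\ a rotation by $\pi$ about an axis, which swaps the two annuli and conjugates the fractional twists correctly.

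In addition, ``exhibiting a single instance'' does not prove the statement, which asserts existence for every $g\ge 5$. The paper obtains the full range from Proposition~\ref{prop:inf_split_meta} by choosing, for each $g$, two $\Z_3\rtimes_{-1}\Z_2$-actions on surfaces of genus $g_1,g_2$ with $g_1+g_2+1=g$ whose split metacyclic data sets contain a tuple of the form $[(0,1),(a_i,3),3]$ (the case $g_1=g_2=2$ giving $g=5$), and the existence of such input actions in the needed genera is part of what must be checked. Since you ultimately defer the crucial compatibility $GFG^{-1}=F^{-1}$ to Proposition~\ref{prop:inf_split_meta}, your outline does coincide with the paper's proof; but as written, the standalone construction you describe fails at the two points above and does not address the dependence on $g$.
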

 In Section~\ref{sec:hyp_str}, we classify the finite non-abelian split metacyclic subgroups of $\Mod(S_3)$ and $\Mod(S_5)$ up to a certain weaker notion of conjugacy that we call \textit{weak conjugacy} (see Definition~\ref{defn:weak_conj}), which arises naturally in our setting. It may be noted that similar classifications for $2 \leq g \leq 4$ can also be obtained through the techniques developed in~\cite{OVB,AB,HK}. Finally, we apply the results in~\cite{PKS} to provide an algorithm for determining the hyperbolic structures that realize split metacyclic subgroups as groups of isometries. We conclude the paper by giving nontrivial geometric realizations of some finite split metacyclic subgroups of $\Mod(S_3)$ and $\Mod(S_5)$.

\section{Preliminaries}
\label{sec:prelims}
 
\subsection{Fuchsian groups} Let $\Homeo^+(S_g)$ denote the group of orientation-preserving homeomorphisms on $S_g$, and let $H < \Homeo^+(S_g)$ be a finite group. A faithful and properly discontinuous $H$-action on $S_g$ induces a branched covering $$S_g \to \mathcal{O}_H := S_g/H$$ with $\ell$ cone points $x_1,\ldots ,x_{\ell}$ on the quotient orbifold $\mathcal{O}_H \approx S_{g_0}$ (which we will call the \textit{corresponding orbifold}) of orders $n_1, \ldots ,n_{\ell}$, respectively. Then the orbifold fundamental group $\pi_1^{\orb}(\O_H)$ of $\O_H$ has a presentation given by
\begin{equation}
\label{eqn:orb-pres}
\left\langle \alpha_1,\beta_1,\dots,\alpha_{g_0},\beta_{g_0}, \xi_1,\dots,\xi_{\ell} \, |\, \xi_1^{n_1},\dots,\xi_\ell^{n_{\ell}},\,\prod_{j=1}^{\ell} \xi_j \prod_{i=1}^{g_0}[\alpha_i,\beta_i]\right\rangle.
\end{equation}
In classical parlance, $\pi_1^{\orb}(\O_H)$ is also known as a \textit{Fuchsian group}~\cite{SK1, M1} with signature
$$\Gamma(\O_H) := (g_0;n_1,\ldots,n_{\ell}),$$ and the relation $\prod_{j=1}^{\ell} \xi_j \prod_{i=1}^{g_0}[\alpha_i,\beta_i]$ appearing in its presentation is called the \textit{long relation}.
\noindent From Thurston's orbifold theory~\cite[Chapter 13]{WT}, we obtain exact sequence
\begin{equation}
\label{eq:surf_kern}
1 \rightarrow \pi_1(S_g) \rightarrow \pi_1^{\orb}(\O_H) \xrightarrow{\phi_H}  H \rightarrow 1.
\end{equation}
\noindent In this context, we will require the following result due to Harvey~\cite{H1}.
 
\begin{lemma}
\label{Harvey Condition} 
A finite group $H$ acts faithfully on $S_g$ with $\Gamma(\O_H) = (g_0;n_1,\dots,n_{\ell})$ if and only if it satisfies the following two conditions: 
\begin{enumerate}[(i)]
\item $\displaystyle \frac{2g-2}{|H|}=2g_0-2+\sum_{i=1}^{\ell}\left(1-\frac{1}{n_i}\right)$, and 
\item  there exists a surjective homomorphism $\phi_H:\pi_1^{\orb}(\O_H) \to H$ that preserves the orders of all torsion elements of $\pi_1^{\orb}(\O_H)$.
\end{enumerate}
\end{lemma}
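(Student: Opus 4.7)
My plan is to prove the two directions of the equivalence separately, using the correspondence between faithful $H$-actions on $S_g$ and surjections from Fuchsian groups with torsion-free kernel.

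For necessity, I would begin with a faithful $H$-action on $S_g$ with signature $(g_0;n_1,\dots,n_\ell)$ and produce the exact sequence~\eqref{eq:surf_kern} directly from Thurston's orbifold theory: the covering $S_g \to \O_H$ is a regular orbifold cover with deck group $H$, and $\pi_1(S_g)$ is the subgroup of $\pi_1^{\orb}(\O_H)$ corresponding to this cover. The map $\phi_H$ in~\eqref{eq:surf_kern} is surjective by construction, and it preserves the orders of torsion elements because each torsion element $\xi_i$ in the presentation~\eqref{eqn:orb-pres} is represented by a small loop around the cone point $x_i$, whose stabilizer under the $H$-action is cyclic of order exactly $n_i$; any $\xi_i^{j}$ with $1\le j<n_i$ would map to a nontrivial element of that stabilizer, ruling out a drop in order. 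For condition~(i), I would compute the orbifold Euler characteristic $\chi^{\orb}(\O_H) = 2-2g_0-\sum_{i=1}^{\ell}\left(1-\frac{1}{n_i}\right)$ and invoke the multiplicativity $\chi(S_g)=|H|\cdot \chi^{\orb}(\O_H)$ of Euler characteristics under orbifold covers, which rearranges precisely to~(i) (the Riemann-Hurwitz formula).

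For sufficiency, assume~(i) and~(ii). From~(ii), the kernel $K := \ker \phi_H$ is a subgroup of $\pi_1^{\orb}(\O_H)$, viewed as a Fuchsian group acting on $\h$ with quotient orbifold $\O_H$. The crucial point is that $K$ is torsion-free: every nontrivial torsion element of $\pi_1^{\orb}(\O_H)$ is conjugate to some power $\xi_i^{j}$ with $1\le j<n_i$, and the order-preserving hypothesis forces $\phi_H(\xi_i^{j})$ to have order $n_i/\gcd(n_i,j)>1$, so such an element cannot lie in $K$. Hence $K$ acts freely on $\h$, and the quotient $\h/K$ is a closed orientable surface $S_{g'}$ for some $g'$. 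The deck group of the regular cover $\h/K\to\O_H$ is $\pi_1^{\orb}(\O_H)/K \cong H$, which gives a faithful properly discontinuous $H$-action on $S_{g'}$ with quotient orbifold $\O_H$. Finally, Euler characteristic multiplicativity combined with~(i) forces $g'=g$, completing the construction.

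The main subtlety is verifying that $K$ is genuinely torsion-free from the order-preserving hypothesis, since a priori one only knows that $\phi_H(\xi_i)$ has order $n_i$, and one must rule out that some nontrivial power or conjugate lies in $K$. The standard fact (essentially due to the structure of Fuchsian groups) that every torsion element of $\pi_1^{\orb}(\O_H)$ is conjugate to a power of some $\xi_i$ is what makes the order-preserving condition on the generators $\xi_i$ sufficient to guarantee torsion-freeness of $K$; I would cite this as a standard property of Fuchsian groups rather than reprove it. The remaining verification, identifying the signature of $\O_H = \h/\pi_1^{\orb}(\O_H)$ with the given data and pinning down $g'=g$ via Riemann-Hurwitz, is then a routine computation.
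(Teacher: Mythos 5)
Your proposal is correct, but note that the paper does not prove this lemma at all: it is quoted as a known result of Harvey (reference [H1]), so there is no internal proof to compare against. What you have written is essentially the classical proof of Harvey's criterion, and it is sound: necessity follows from the exact sequence $1 \to \pi_1(S_g) \to \pi_1^{\orb}(\O_H) \to H \to 1$ (order preservation being forced by torsion-freeness of $\pi_1(S_g)$, equivalently by your stabilizer argument) together with multiplicativity of the orbifold Euler characteristic; sufficiency follows by realizing $\pi_1^{\orb}(\O_H)$ as a cocompact Fuchsian group, observing that $K=\ker\phi_H$ is torsion-free, and identifying $\h/K$ with $S_g$ via Riemann--Hurwitz. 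Two small remarks. First, since condition (ii) as stated concerns \emph{all} torsion elements (not just the generators $\xi_i$), torsion-freeness of $K$ is immediate: a nontrivial torsion element in the kernel would have its order collapse to $1$. Your appeal to the fact that every torsion element of a Fuchsian group is conjugate to a power of some $\xi_i$ is only needed for the sharper, commonly used form of Harvey's condition in which order preservation is imposed only on the images $\phi_H(\xi_i)$; it does no harm here, and it is the right fact to cite if one wants that stronger formulation. Second, your realization of $\pi_1^{\orb}(\O_H)$ as a group of isometries of $\h$ implicitly requires the signature to have negative orbifold Euler characteristic; this is guaranteed in the paper's setting since condition (i) gives $\chi^{\orb}(\O_H) = -(2g-2)/|H| < 0$ for $g \geq 2$, and orientability and closedness of $\h/K$ follow from $K$ being a torsion-free, finite-index (hence cocompact) subgroup of a Fuchsian group of orientation-preserving isometries. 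With those points made explicit, the argument is complete.
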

 
\subsection{Cyclic actions on surfaces } For $g \geq 1$, let $F \in \Mod(S_g)$ be of order $n$. The Nielsen-Kerckhoff theorem~\cite{SK,JN} asserts that $F$ is represented by a \textit{standard representative} $\F \in \Homeo^+(S_g)$ of the same order. We refer to both $\F$ and the group it generates, interchangeably, as a \textit{$\Z_n$-action on $S_g$}.  Each cone point $x_i \in \O_{\langle \F \rangle}$ lifts to an orbit of size $n/n_i$ on $S_g$, and the local rotation induced by $\F$ around the points in each orbit is given by $2 \pi c_i^{-1}/n_i$, where $\gcd(c_i,n_i)=1$ and $c_i c_i^{-1} \equiv 1 \pmod{n_i}$. Further, it is known (see~\cite{H1} and the references therein) that the exact sequence in \ref{eq:surf_kern} takes the following form 
\begin{equation*}
\label{eq:surf_kern}
1 \rightarrow \pi_1(S_g) \rightarrow \pi_1^{\orb}(\O_{\langle \F \rangle}) \xrightarrow{\phi_{\langle \F \rangle}}  \langle \F \rangle \rightarrow 1, 
\end{equation*}
\noindent where $\phi_{\langle \F \rangle} (\xi _i) = \F^{(n/n_i)c_i}$, for $1 \leq i \leq \ell$. We will now introduce a tuple of integers that encodes the conjugacy class of a $\Z_n$-action on $S_g$.  

\begin{definition}\label{defn:data_set}
A \textit{data set of degree $n$} is a tuple
$$
D = (n,g_0, r; (c_1,n_1),\ldots, (c_{\ell},n_{\ell})),
$$
where $n\geq 2$, $g_0 \geq 0$, and $0 \leq r \leq n-1$ are integers, and each $c_i \in \Z_{n_i}^\times$ such that:
\begin{enumerate}[(i)]
\item $r > 0$ if and only if $\ell = 0$ and $\gcd(r,n) = 1$, whenever $r >0$,
% check above change.... 
\item each $n_i\mid n$,
%\item for each $i$, $\gcd(c_i,n_i) = 1$, 
\item $\lcm(n_1,\ldots ,\widehat{n_i}, \ldots,n_{\ell}) = N$, for $1 \leq i \leq \ell$, where $N = n$, if $g_0 = 0$,  and
\item $\displaystyle \sum_{j=1}^{\ell} \frac{n}{n_j}c_j \equiv 0\pmod{n}$.
\end{enumerate}
The number $g$ determined by the Riemann-Hurwitz equation
\begin{equation}\label{eqn:riemann_hurwitz}
\frac{2-2g}{n} = 2-2g_0 + \sum_{j=1}^{\ell} \left(\frac{1}{n_j} - 1 \right) 
\end{equation}
is called the {genus} of the data set, denoted by $g(D)$.
\end{definition}

\noindent Note that quantity $r$ (in Definition~\ref{defn:data_set}) will be non-zero if and only if $D$ represents a free rotation of $S_g$ by $2\pi r/n$, in which case, $D$ will take the form $(n,g_0,r;)$. We will not include $r$ in the notation of a data set, whenever $r = 0$.  

By the Nielsen-Kerckhoff theorem, the canonical projection $\Homeo^+(S_g)\to \Mod(S_g)$ induces a bijective correspondence between the conjugacy classes of finite-order maps in $\Homeo^+(S_g)$ and the conjugacy classes of finite-order mapping classes in $\Mod(S_g)$. This leads us to the following lemma (that follows from~\cite[Theorem 3.8]{KP} and~\cite{H1}), which allows us to use data sets to describe the conjugacy classes of cyclic actions on $S_g$. 

\begin{lemma}\label{prop:ds-action}
For $g \geq 1$ and $n \geq 2$, data sets of degree $n$ and genus $g$ correspond to conjugacy classes of $\Z_n$-actions on $S_g$. 
\end{lemma}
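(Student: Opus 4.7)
The plan is to invoke the Nielsen--Kerckhoff theorem to replace the mapping class group problem by its lift to $\Homeo^+(S_g)$: conjugacy classes of order-$n$ elements in $\Mod(S_g)$ correspond bijectively to conjugacy classes of $\Z_n$-actions on $S_g$. Hence it suffices to match these actions to data sets, and I would accomplish this by combining Harvey's criterion (Lemma~\ref{Harvey Condition}) with the local rotation data at the branch points of the quotient map.

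For the forward direction, given a $\Z_n$-action $\langle \F \rangle$ on $S_g$, I would read off the data set directly from the quotient orbifold $\O_{\langle \F \rangle} \approx S_{g_0}$: the cone points have orders $n_1, \ldots, n_\ell$ and local rotation angles $2\pi c_i^{-1}/n_i$, which supply the tuples $(c_i, n_i)$ with $c_i \in \Z_{n_i}^\times$. Condition (ii) of Definition~\ref{defn:data_set} holds because each $n_i$ is the order of a point-stabilizer in $\Z_n$; condition (iv) is the image of the long relation from~\eqref{eqn:orb-pres} under the surface-kernel homomorphism $\phi_{\langle \F \rangle}$; and condition (iii) is the cyclic incarnation of Harvey's surjectivity and order-preservation requirements, with the stronger form $N = n$ when $g_0 = 0$ arising because there are no hyperbolic generators available to supply missing elements. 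Condition (i) is just the free-action convention, and the Riemann--Hurwitz formula forces $g(D) = g$.

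Conversely, given a data set $D$, I would define $\phi : \pi_1^{\orb}(S_{g_0}; n_1, \ldots, n_\ell) \to \Z_n$ on the generators in~\eqref{eqn:orb-pres} by $\xi_i \mapsto (n/n_i)c_i$, extending to the hyperbolic generators $\alpha_j, \beta_j$ (when $g_0 > 0$) so as to make the image all of $\Z_n$; in the free case $(n, g_0, r;)$ one instead sends a suitable $\alpha_j$ to $r$. Condition (iv) makes $\phi$ well defined on the long relation; condition (ii) together with $\gcd(c_i, n_i) = 1$ guarantees that each $\phi(\xi_i)$ has order exactly $n_i$; and condition (iii) ensures surjectivity. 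Lemma~\ref{Harvey Condition} then produces a $\Z_n$-action on $S_g$ realizing the prescribed combinatorial data.

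The main obstacle lies in the uniqueness direction: two $\Z_n$-actions producing the same data set must be conjugate in $\Homeo^+(S_g)$. The essential content here is that the pair consisting of the signature and the local rotation data is a complete topological invariant of a cyclic action on a closed orientable surface, a classical fact due to Nielsen that is restated in our present language in~\cite[Theorem 3.8]{KP}; I would cite this directly rather than reprove it. Combined with the Nielsen--Kerckhoff bijection, this yields the required one-to-one correspondence between data sets of degree $n$ and genus $g$ and conjugacy classes of $\Z_n$-actions on $S_g$.
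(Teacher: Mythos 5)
Your proposal is correct and follows essentially the same route as the paper, which derives this lemma by combining the Nielsen--Kerckhoff correspondence with Harvey's criterion and the completeness of the signature-plus-rotation-data invariant cited from~\cite[Theorem 3.8]{KP}; your write-up simply makes explicit the verification of the data-set conditions and the construction of the order-preserving epimorphism $\xi_i \mapsto (n/n_i)c_i$ that the paper leaves to those references. No gaps.
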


\noindent We will denote the data set corresponding to the conjugacy class of a periodic mapping class $F$ by $D_F$. For compactness of notation, we also write a data set $D$ (as in Definition~\ref{defn:data_set}) as
$$D = (n,g_0,r; ((d_1,m_1),\alpha_1),\ldots,((d_{\ell'},m_{\ell'}),\alpha_{\ell'})),$$
where $(d_i,m_i)$ are the distinct pairs in the multiset $S = \{(c_1,n_1),\ldots,(c_{\ell},n_{\ell})\}$, and the $\alpha_i$ denote the multiplicity of the pair $(d_i,m_i)$ in the multiset $S = \{(c_1,n_1),\ldots,(c_{\ell},n_{\ell})\}$. Further, we note that every cone point $[x] \in \mathcal{O}_{\langle \F \rangle}$ corresponds to a unique pair in the multiset $S$ appearing in $D_F$, which we denote by $\P_x := (c_{x},n_{x})$. 

Given $u \in \Z_m^{\times}$ and $\G \in H \leq \Homeo^+(S_g)$ be of order $m$, let $\mathbb{F}_{\G} (u,m)$ denote the set of fixed points of $\G$ with induced rotation angle $2\pi u^{-1}/m$. Let $C_H(\G)$ be the centralizer of $\G \in H$ and $\sim$ denote the conjugation relation between any two elements in $H$. We conclude this subsection by stating the following result from the theory of Riemann surfaces~\cite{TB}, which we will use in the proof of our main theorem.

\begin{lemma}
\label{lem:riem_surf_fix}
Let $H < \Homeo^+(S_g)$ of finite order with $\Gamma(\O_H)  = (g_0; n_1,\ldots,n_{\ell})$, and let $\G \in H$ be of order $m$. Then for $u \in \Z_m ^{\times}$, we have
\[\displaystyle |\mathbb{F}_{\G} (u,m)| = |C_H(\G)| \cdot \sum_{\displaystyle \substack{1 \leq i \leq \ell \\ m \mid n_i \\ \G \sim \phi_H(\xi_i)^{n_i u/m}}} \frac{1}{n_i}.\]
\end{lemma}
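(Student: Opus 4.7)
The plan is to count $\mathbb{F}_{\G}(u,m)$ by descending through the branched covering $\pi\colon S_g\to\O_H$. Since the $H$-action is free on the complement of $\pi^{-1}\{x_1,\dots,x_\ell\}$, every fixed point of $\G$ lies above one of the cone points $x_i$. Accordingly, the first step is to fix an index $i$, choose a preimage $y_i\in\pi^{-1}(x_i)$ normalized so that $\stab_H(y_i)=\langle\phi_H(\xi_i)\rangle$, and count the fixed points of $\G$ in the orbit $H\cdot y_i$. Identifying $\pi^{-1}(x_i)$ with the coset space $H/\langle\phi_H(\xi_i)\rangle$, the task reduces to counting cosets $h\langle\phi_H(\xi_i)\rangle$ for which $h\cdot y_i$ is fixed by $\G$ with induced rotation angle $2\pi u^{-1}/m$.

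Next, $\G$ fixes $h\cdot y_i$ precisely when $h^{-1}\G h\in\stab_H(y_i)=\langle\phi_H(\xi_i)\rangle$, and since every element of $H$ is orientation-preserving, the local rotation of $\G$ at $h\cdot y_i$ agrees with that of $h^{-1}\G h$ at $y_i$. Reading $\phi_H(\xi_i)$ as the local generator acting by the standard positive rotation at $y_i$, the order condition forces $m\mid n_i$, and matching the rotation angle pins down the power uniquely: $h^{-1}\G h=\phi_H(\xi_i)^{n_i u/m}$. Consequently, the $i$-th cone point contributes nothing unless $\G\sim\phi_H(\xi_i)^{n_i u/m}$ in $H$, in which case the set of $h\in H$ effecting this conjugation is a single coset of $C_H(\G)$ and therefore has cardinality $|C_H(\G)|$. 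Dividing by the $n_i$-fold overcount arising from right multiplication by $\stab_H(y_i)$ gives exactly $|C_H(\G)|/n_i$ fixed points of $\G$ above $x_i$, and summing over the admissible indices produces the stated formula.

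The main obstacle, once this organizational framework is fixed, is the rotational bookkeeping: one must verify that the identification $\stab_H(y_i)=\langle\phi_H(\xi_i)\rangle$ is compatible with the presentation \eqref{eqn:orb-pres} of $\pi_1^{\orb}(\O_H)$, and that conjugation by an orientation-preserving $h$ transports local rotations in the expected way, so that the exponent appearing in the conjugacy condition is exactly $n_i u/m$ rather than an inverse variant. A useful sanity check is to specialize to $\G=\phi_H(\xi_j)^{n_j u/m}$ for some $j$, where the formula should correctly reproduce $|C_H(\G)|/n_j$ as the contribution from the $j$-th cone point; once this convention-matching is secured, the double-counting argument above is routine.
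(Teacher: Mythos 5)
The paper offers no proof of this lemma at all --- it is quoted from the Riemann-surface literature (the reference [TB]) --- so there is no argument of the authors to compare yours with; what you propose is the standard orbit-counting proof, and its combinatorial core is sound: all fixed points lie over cone points, the fiber over $x_i$ is $H/\langle\phi_H(\xi_i)\rangle$, the solutions $h$ of $h^{-1}\G h=w$ for a fixed $w$ form a coset of $C_H(\G)$, and the $n_i$-fold overcount by $\stab_H(y_i)$ gives the factor $1/n_i$. The genuine gap is exactly the step you postpone as ``rotational bookkeeping,'' because that is where all the content of the lemma sits. If you normalize $\phi_H(\xi_i)$ to act at $y_i$ as the standard rotation by $2\pi/n_i$ (which you should actually justify: $\xi_i$ fixes a point of $\h$, and since $\ker\phi_H$ is torsion-free the projection $\h\to S_g$ is an unbranched local homeomorphism there, so $\phi_H(\xi_i)$ rotates at $y_i$ by the same angle as $\xi_i$), then an element $\phi_H(\xi_i)^{(n_i/m)s}$ rotates by $2\pi s/m$; hence a fixed point of $\G$ with induced rotation angle $2\pi u^{-1}/m$, which is the paper's definition of membership in $\mathbb{F}_{\G}(u,m)$, forces $h^{-1}\G h=\phi_H(\xi_i)^{n_iu^{-1}/m}$, not $\phi_H(\xi_i)^{n_iu/m}$ as you assert. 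These two conjugacy conditions are genuinely different (already for $H$ cyclic they differ unless $u\equiv u^{-1}$), so the exponent cannot be ``pinned down'' to $n_iu/m$ without reconciling the labeling convention --- compare the cyclic case in the paper, where the pair $(c_i,n_i)$ with rotation angle $2\pi c_i^{-1}/n_i$ corresponds to $\phi_{\langle\F\rangle}(\xi_i)=\F^{(n/n_i)c_i}$, i.e.\ $\F$ is the $c_i^{-1}$-th power of the distinguished generator. Either you carry this inversion through explicitly (so that the count you produce is indexed consistently with the definition of $\mathbb{F}_{\G}(u,m)$, possibly after a $u\leftrightarrow u^{-1}$ reindexing of the statement), or the proof is incomplete at its only delicate point.

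Note also that your proposed sanity check does not secure the convention; it exposes the issue. Taking $\G=\phi_H(\xi_j)^{n_ju/m}$, the point $y_j$ is indeed a fixed point contributing $|C_H(\G)|/n_j$ on the right-hand side, but its honest rotation angle is $2\pi u/m$, so under the paper's definition it belongs to $\mathbb{F}_{\G}(u^{-1},m)$ rather than $\mathbb{F}_{\G}(u,m)$. Carrying out this check carefully is precisely what forces you to decide between the exponent $n_iu/m$ and $n_iu^{-1}/m$, and that decision is the missing piece of your argument.
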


\subsection{Hyperbolic structures realizing cyclic actions}
\label{subsec:hyp_str}
 Given a finite subgroup $H < \Mod(S_g)$, let $\text{Fix}(H)$ denote the subspace of fixed points in the Teich\"{m}uller space $\text{Teich}(S_g)$ under the action of $H$. When $H$ is cyclic, a method for constructing the hyperbolic metrics representing the points in $\text{Fix}(H)$ was described in~\cite{BPR} and~\cite{PKS}, thereby yielding explicit solutions to the Nielsen realization problem~\cite{SK,JN}. This method involved the construction of an arbitrary periodic element in $\Mod(S_g)$ (that is not realizable as a rotation of $S_g$) by the ``compatibilities" of irreducible Type 1 components, which are uniquely realized as rotations of certain special hyperbolic polygons with side-pairings. 

A mapping class that is not reducible is called \textit{irreducible}. Let $F \in \Mod(S_g)$ be of order $n$. Gilman~\cite{G1} showed that $F$ is irreducible if and only if $\Gamma(\O_{\langle \F\rangle})$ has the form $(0;n_1,n_2,n_3)$ (i.e. the quotient orbifold $\O_{\langle \F\rangle}$ is a sphere with three cone points.) Following the nomenclature in~\cite{BPR,PKS}, $F$ is \textit{rotational} if $\F$ is either of order $2$, or $\F$ has at most $2$ fixed points. A non-rotational $F$ is said to be of \textit{Type 1} if $\Gamma(\O_{\langle \F\rangle})=(g_0;n_1,n_2,n)$, otherwise, it is called a \textit{Type 2} action. The following result describes the unique hyperbolic structure that realizes an irreducible Type 1 action.

\begin{theorem}\label{res:1}
For $g \geq 2$, consider a irreducible Type 1 action $F \in {\Mod}(S_g)$ with $$D_F = (n,0; (c_1,n_1),\linebreak (c_2,n_2), (c_3,n)).$$ Then $F$ can be realized explicitly as the rotation $\displaystyle \theta_F = \frac{2 \pi c_3^{-1}}{n}$ of a hyperbolic polygon $\P_F$ with a suitable side-pairing $W(\P_F)$, where $\P_F$ is a hyperbolic  $k(F)$-gon with
$$ k(F) := \begin{cases}
2n, & \text { if } n_1,n_2 \neq 2, \text{ and } \\
n, & \text{otherwise, }
\end{cases}$$
and for $0 \leq m\leq n-1$, 
$$ 
W(\P_F) =
\begin{cases}
\displaystyle  
  \prod_{i=1}^{n} a_{2i-1} a_{2i} \text{ with } a_{2m+1}^{-1}\sim a_{2z}, & \text{if } k(F) = 2n, \text{ and } \\
\displaystyle
 \prod_{i=1}^{n} a_{i} \text{ with } a_{m+1}^{-1}\sim a_{z}, & \text{otherwise,}
\end{cases}$$
where $\displaystyle z \equiv m+qj \pmod{n}$ with $q= (n/n_2)c_3^{-1}$ and $j=n_{2}-c_{2}$.
\end{theorem}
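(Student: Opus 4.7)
The plan is to prove this by an explicit hyperbolic construction: exhibit $\P_F$ as a fundamental polygon for $\pi_1(S_g)$ acting on $\h$, viewed as a subgroup of a Fuchsian triangle group, and then check that a rotation of $\h$ about its center by $\theta_F$ descends under side identifications to a representative of $F$ on $S_g$.

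\textbf{Setup.} Since $g \geq 2$ and $D_F = (n,0;(c_1,n_1),(c_2,n_2),(c_3,n))$, the Riemann-Hurwitz equation \eqref{eqn:riemann_hurwitz} forces $\tfrac{1}{n_1}+\tfrac{1}{n_2}+\tfrac{1}{n} < 1$, so $\O_{\langle \F \rangle}$ is a hyperbolic triangle orbifold uniformized as $\h/\Gamma$, where $\Gamma = \pi_1^{\orb}(\O_{\langle \F \rangle})$ is generated by rotations $\xi_1,\xi_2,\xi_3$ of orders $n_1,n_2,n$ about the vertices $v_1,v_2,v_3$ of a hyperbolic triangle $T$ with angles $\pi/n_1,\pi/n_2,\pi/n$, subject to $\xi_1\xi_2\xi_3 = 1$. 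Then $K := \ker \phi_{\langle \F \rangle}$ is a torsion-free subgroup of index $n$ in $\Gamma$ with $K \cong \pi_1(S_g)$ and $\Gamma/K \cong \langle \F \rangle$, so that $S_g = \h/K \to \O_{\langle \F \rangle}$ is a regular $n$-sheeted covering.

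\textbf{Construction of $\P_F$.} Form $\P_F$ by assembling the $2n$ triangles in the $\Gamma$-orbit of $T$ that meet at $v_3$ into a single polygon centered at that vertex. Each triangle contributes one copy of the edge $v_1v_2$ to the boundary; when $n_1,n_2 \neq 2$ these $2n$ edges remain distinct, producing a hyperbolic $2n$-gon, while if $n_1 = 2$ (resp.\ $n_2 = 2$) adjacent pairs of edges become colinear and merge, reducing $\P_F$ to an $n$-gon. Label the boundary edges cyclically $a_1,\dots,a_{k(F)}$. The Euclidean-style rotation of $\h$ by $2\pi/n$ about $v_3$ is induced by $\xi_3$, whose image under $\phi_{\langle \F \rangle}$ is $\F^{c_3}$; hence the rotation by $\theta_F = 2\pi c_3^{-1}/n$ projects exactly to $\F$, and on the boundary of the $2n$-gon it shifts indices by $a_i \mapsto a_{i+2c_3^{-1}}$ modulo $2n$.

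\textbf{The side-pairings.} A pairing $a_{2m+1}^{-1}\sim a_{2z}$ corresponds to an element $\gamma \in \Gamma$ carrying one edge to the other with reversed orientation; by construction, $\gamma$ factors as a power of $\xi_2$ (which inverts an edge at $v_2$) followed by a rotation $\xi_3^{s}$ about the center. The pairing must lie in $K = \ker\phi_{\langle \F \rangle}$, which translates to $\phi_{\langle \F\rangle}(\xi_2^{n_2-c_2}\,\xi_3^{s}) = 1$, i.e.\ $(n/n_2)(n_2-c_2)c_2 + s\,c_3 \equiv 0 \pmod n$. Solving for the rotational shift $2(z-m) \equiv 2s\,c_3^{-1} \pmod{2n}$ yields $z - m \equiv (n/n_2)c_3^{-1}(n_2 - c_2) \pmod n$, which is precisely $z \equiv m + qj \pmod n$ with $q = (n/n_2)c_3^{-1}$ and $j = n_2 - c_2$. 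The $n$-gon case follows by re-indexing after the collapse described above.

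\textbf{Verification and main obstacle.} It remains to verify that (i) the paired $\P_F$ closes up to a closed surface of Euler characteristic $2-2g$ (by~\eqref{eqn:riemann_hurwitz} together with a vertex-orbit count on $\partial \P_F$); (ii) the orbits of boundary vertices under the pairing collapse to exactly two cone points, of orders $n_1$ and $n_2$, while the central $v_3$ has order $n$, matching $D_F$; and (iii) $\theta_F$ preserves the pairing relation (under $m \mapsto m + c_3^{-1}$ one computes $z \mapsto z + c_3^{-1}$), so it descends to a self-homeomorphism of $\h/K \cong S_g$ realizing $F$. The principal obstacle is justifying the exponent $j = n_2-c_2$ in step (iii): the orientation reversal built into $a_{2m+1}^{-1}\sim a_{2z}$ must be tracked as an inversion of the local rotation data at $v_2$, producing the replacement $c_2 \mapsto n_2 - c_2$ in the congruence, and one must check uniqueness of this choice by showing any other consistent pairing differs by an element of the centralizer of $\langle \F \rangle$ in $\Gamma$ and so yields an isometric polygon. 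Uniqueness of the hyperbolic structure realizing $F$ then follows from the rigidity of the triangle group $\Gamma$ together with the Nielsen realization theorem.
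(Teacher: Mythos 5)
Two preliminary remarks: the paper itself does not prove Theorem~\ref{res:1} --- it is imported from~\cite{BPR,PKS} --- and the strategy you outline (uniformize $\O_{\langle \F \rangle}$ by the $(n_1,n_2,n)$-triangle group $\Gamma$, set $K=\ker\phi_{\langle \F \rangle}\cong\pi_1(S_g)$, take for $\P_F$ the union of the $2n$ triangles surrounding the order-$n$ vertex $v_3$, and note that $\xi_3^{c_3^{-1}}$ descends to $F$ on $\h/K$) is exactly the construction underlying the cited result. Up to that point your sketch is sound, modulo the small slip that those $2n$ triangles form the orbit of $T$ under the full reflection triangle group (alternating mirror images), not under the orientation-preserving group $\Gamma$.

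The genuine gap is in the derivation of the side-pairing law, which is the actual content of the theorem. You posit that the pairing element has the form $\xi_2^{\,n_2-c_2}\xi_3^{\,s}$ and impose membership in $K$, getting $(n/n_2)(n_2-c_2)c_2+s\,c_3\equiv 0\pmod n$, i.e.\ $s\equiv (n/n_2)c_2^{\,2}c_3^{-1}\pmod n$; but the index shift this produces is $(n/n_2)c_2^{\,2}c_3^{-2}$ (or $(n/n_2)c_2^{\,2}c_3^{-1}$, depending on whether the shift is $s$ or $sc_3^{-1}$), and neither equals the asserted $qj=(n/n_2)c_3^{-1}(n_2-c_2)\equiv-(n/n_2)c_2c_3^{-1}\pmod n$ unless $c_2\equiv-c_3\pmod{n_2}$, which is not implied by the data-set conditions. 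So the congruence you write does not yield the formula you claim, and the exponent $n_2-c_2$ on $\xi_2$ is never justified --- you flag it yourself as the ``principal obstacle'' and leave it unresolved. In the correct computation that exponent never appears on $\xi_2$ at all: since the reflection triangle group acts simply transitively on triangles, the element of $K$ carrying $\P_F$ to the adjacent copy across the edge $v_1v_2$ is forced to be of the form $\xi_3^{\,i}\xi_2^{\pm1}$ (one rotation step about the $v_2$-type endpoint composed with a central rotation), and $\xi_3^{\,i}\xi_2^{\pm1}\in K$ reads $i\,c_3\pm(n/n_2)c_2\equiv 0\pmod n$, giving the shift $i\equiv\mp(n/n_2)c_2c_3^{-1}\pmod n$; the factor $n_2-c_2$ in the statement is merely the rewriting $-(n/n_2)c_2\equiv(n/n_2)(n_2-c_2)\pmod n$, not an ``inversion of the local rotation data at $v_2$.'' Without this step the explicit formula for $W(\P_F)$ --- hence the theorem --- is not established; the $n$-gon case ($n_1$ or $n_2$ equal to $2$) and the check that the vertex cycles close up with total angle $2\pi$, producing orbits of sizes $n/n_1$ and $n/n_2$ consistent with $D_F$, also depend on having the correct pairing in hand.
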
 

Further, it was shown~\cite{PKS} that the process of realizing an arbitrary non-rotational action $F$ of order $n$ using these unique hyperbolic structures realizing irreducible Type 1 components involved two broad types of processes. 
\begin{enumerate}[(a)]
\item \textit{$k$-compatibility.} In this process, for $i =1,2$, we take a pair of irreducible Type 1 mapping classes $F_i \in \Mod(S_{g_i})$ such that the $\langle \F_i \rangle$-action on $S_{g_i}$ induces a pairs of \textit{compatible orbits} of size $k$ (where the induced local rotation angles add upto 0 modulo $2\pi$). We remove (cyclically permuted) $\langle \F_i \rangle$-invariant disks around points in the compatible orbits and then identify the resulting boundary components realizing a periodic mapping class $F \in \Mod(S_{g_1+g_2+k-1})$. An analogous construction can also be performed using a pair of orbits induced by a single $\langle \F' \rangle$-action on $S_g$ to realize a periodic mapping class $F \in \Mod(S_{g+k})$.
\item \textit{Permutation additions and deletions.} The \textit{addition of a permutation component} involves the removal of (cyclically permuted) invariant disks around points in an orbit of size $n$ induced by an $\langle \F \rangle$-action on $S_g$ and then pasting $n$ copies of $S_{g'}^1$ (i.e. $S_{g'}$ with one boundary component) to the resultant boundary components. This realizes a action on $S_{g+ng'}$ with the same fixed point and orbit data as $F$. The reversal of this process is called a \textit{permutation deletion. }
 \end{enumerate}
 
\noindent Thus, in summary, we have the following: 
\begin{theorem}\cite[Theorem 2.24]{PKS}\label{res:2}
For $g \geq 2$, a non-rotational periodic mapping class in $\Mod(S_g)$ can be realized through finitely many $k$-compatibilities, permutation additions, and permutation deletions on the unique structures of type $\P_F$ realizing irreducible Type 1 mapping classes. 
\end{theorem}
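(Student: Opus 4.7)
The approach is to induct on the complexity $(g_0, \ell)$ of the signature $\Gamma(\O_{\langle \F\rangle}) = (g_0; n_1, \ldots, n_\ell)$, ordered lexicographically, reducing any non-rotational action to a combination of irreducible Type 1 building blocks via the three listed operations.

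For the \emph{base case} $(g_0, \ell) = (0, 3)$, the action $F$ is irreducible. Non-rotationality forces $\F$ to have at least three fixed points, and since these correspond bijectively to cone points of order $n$, we must have $n_1 = n_2 = n_3 = n$. Hence $\Gamma = (0; n, n, n)$ is of Type 1, and Theorem \ref{res:1} realizes $F$ uniquely as a rotation of the prescribed hyperbolic polygon $\P_F$ with side-pairing $W(\P_F)$.

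For the \emph{inductive step}, assume the result for all non-rotational actions of smaller complexity. If $g_0 \geq 1$, I would identify a non-separating simple closed curve $\gamma \subset \O_{\langle \F\rangle}$ disjoint from all cone points whose preimage in $S_g$ consists of $n$ cyclically permuted circles bounding $n$ cyclically permuted genus-$1$ handles. Excising these handles and capping off with invariant disks yields a smaller action $F_0$ of signature $(g_0 - 1; n_1, \ldots, n_\ell)$, from which $F$ is recovered by a permutation addition with $g' = 1$. Since $F_0$ shares the fixed-point data of $F$, it remains non-rotational and the inductive hypothesis applies. If instead $g_0 = 0$ and $\ell > 3$, non-rotationality guarantees at least three $n_i$ equal $n$. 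Enclose two such cone points $x_{i_1}, x_{i_2}$ by a small SCC $\gamma$ on $\O_{\langle \F\rangle}$; the long relation forces its preimage in $S_g$ to be a single $F$-orbit of $k$ circles for a controlled $k \mid n$. Cutting along this orbit decomposes $F$ as the $k$-compatibility of an irreducible Type 1 piece $F_1$ of signature $(0; n, n, n/k)$ (realized by $\P_{F_1}$) and a residual action $F_2$ of signature $(0; n/k, n_{i_3}, \ldots, n_{i_\ell})$ of strictly smaller complexity, to which the inductive hypothesis applies.

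The main obstacle is verifying that the residual piece $F_2$ remains non-rotational: the decomposition removes two order-$n$ cone points and introduces a single one of order $n/k$, potentially dropping $F_2$'s fixed-point count below three. I would handle this by choosing the enclosed pair $x_{i_1}, x_{i_2}$ strategically when $F$ has at least five cone points of order $n$, so that $F_2$ retains at least three; the delicate low-count cases (three or four order-$n$ cone points in $F$) would be handled separately by a self-$k$-compatibility that simultaneously peels off two irreducible Type 1 pieces, guaranteeing a non-rotational residual. In every case, validity of the decomposition is certified by checking Harvey's conditions (Lemma \ref{Harvey Condition}) and invoking the data-set correspondence of Lemma \ref{prop:ds-action} to confirm that each resulting signature genuinely arises from a $\Z_n$-action on the appropriate surface.
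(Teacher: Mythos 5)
This statement is quoted verbatim from \cite[Theorem 2.24]{PKS}; the present paper offers no proof of it, so your proposal can only be judged on its own terms, and as written it has genuine gaps. The most serious one is the inductive step for $g_0 \geq 1$: excising a genus-one handle of $\O_{\langle \F \rangle}$ and recovering $F$ by a permutation addition with $g' = 1$ requires the preimage of that handle to be $n$ disjoint, cyclically permuted copies of $S_1^1$, which happens exactly when the handle generators $\alpha_1,\beta_1$ have trivial image under the surface-kernel map $\phi_{\langle \F \rangle}$. In general they do not, and a nonseparating curve with trivial holonomy does not ``bound a handle'' at all; to make this step work you would need a normalization statement (that, when the torsion images already generate $\Z_n$, the epimorphism can be put in a form with all handle generators mapping to $0$, or else you must use the reverse of a self-$k$-compatibility, which cuts along the orbit of a nonseparating curve with \emph{arbitrary} holonomy and adds two cone points of order $\ord(\phi(\gamma))$). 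Neither is argued, so the reduction in quotient genus is not established.

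The $g_0=0$ step also does not close the induction. You correctly flag that the residual piece $F_2$ may fail to be non-rotational: if $F$ has exactly three cone points of order $n$ and $\ell>3$, enclosing two of them leaves $F_2$ with a single fixed point, and your inductive hypothesis simply does not apply to it; the proposed remedy (``a self-$k$-compatibility that simultaneously peels off two irreducible Type 1 pieces'') is asserted rather than constructed, and it is not clear what it means on a genus-zero quotient. You also leave untreated the degenerate choice where $\phi(\xi_{i_1}\xi_{i_2})=0$, in which case the enclosed piece caps off to a sphere with two cone points (a genus-zero rotation, not an irreducible Type 1 action realizable by some $\P_F$); this can be avoided by choosing the pair so that the holonomy around $\gamma$ is nontrivial (possible since $n\ge 3$), but that needs to be said and used. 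Finally, your base case forces the signature $(0;n,n,n)$ by reading ``rotational'' literally as ``at most two fixed points of $\F$''; this is internally consistent with the definitions as restated here, but note that Theorem~\ref{res:1} covers all irreducible Type 1 data $(n,0;(c_1,n_1),(c_2,n_2),(c_3,n))$, and the actual argument in \cite{PKS} organizes the reduction around Type 1 versus Type 2 actions and compatibilities between their orbits rather than around a fixed-point count, so the skeleton of your induction would need these repairs before it matches a correct proof.
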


\noindent A final, but yet vital ingredient in the realization of split metacyclic actions is the following elementary lemma, which is a direct generalization of \cite[Lemma 6.1]{DR}. 

\begin{lemma}
\label{lem:fix_metacyclic}
Let $H  = \langle F, G \rangle$ be a finite metacyclic subgroup of $\Mod(S_g)$. Then $$\mathrm{Fix}(H)=\mathrm{Fix}(\langle F\rangle)\cap\mathrm{Fix}(\langle G\rangle).$$
\end{lemma}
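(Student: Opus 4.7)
The plan is to observe that this identity is purely set-theoretic: it concerns the subspace of points in $\mathrm{Teich}(S_g)$ left pointwise invariant by a group action, and the argument does not really use the metacyclic hypothesis. I will establish the two inclusions separately.

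For any $x \in \mathrm{Teich}(S_g)$ and any subgroup $K \leq \Mod(S_g)$, the point $x$ lies in $\mathrm{Fix}(K)$ precisely when $k \cdot x = x$ for every $k \in K$. Applied to the cyclic subgroup $\langle F \rangle$, this is equivalent to $F \cdot x = x$, since the powers of $F$ automatically fix $x$ once $F$ does; likewise for $\langle G \rangle$. So the statement reduces to proving
\begin{equation*}
\mathrm{Fix}(H) \;=\; \mathrm{Fix}(F) \cap \mathrm{Fix}(G).
\end{equation*}

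First, I would observe that $\mathrm{Fix}(H) \subseteq \mathrm{Fix}(F) \cap \mathrm{Fix}(G)$ is immediate: both $F$ and $G$ belong to $H$, so any point fixed by every element of $H$ is in particular fixed by $F$ and by $G$. For the reverse inclusion, suppose $x \in \mathrm{Fix}(F) \cap \mathrm{Fix}(G)$. Since $H = \langle F, G \rangle$, every element $h \in H$ is expressible as a finite word $w(F^{\pm 1}, G^{\pm 1})$ in the generators. A straightforward induction on the length of such a word, using that both $F$ and $G$ fix $x$, shows that $h \cdot x = x$ for every $h \in H$; hence $x \in \mathrm{Fix}(H)$.

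There is no real obstacle in the argument, as it is essentially formal once one unpacks the definition of $\mathrm{Fix}$ and uses that $F$ and $G$ generate $H$. This is precisely why the statement is a direct generalization of \cite[Lemma 6.1]{DR}, where the same reasoning is applied in the two-generator abelian setting; the finite metacyclic hypothesis on $H$ plays no role beyond providing the context in which the lemma will be invoked.
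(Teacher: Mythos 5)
Your proof is correct and is exactly the elementary argument the paper has in mind: the lemma is stated without a written proof (it is flagged as elementary and a direct generalization of \cite[Lemma 6.1]{DR}), and the intended justification is precisely your two-inclusion argument that a point of $\mathrm{Teich}(S_g)$ fixed by the generators $F$ and $G$ is fixed by every word in them, hence by all of $H$. Your observation that finiteness and the metacyclic structure play no role in the set identity itself is also accurate; they matter only for the context in which the lemma is applied.
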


\subsection{Split metacyclic actions on surfaces} Given integers $m,n \geq 2$, and $k \in \Z_n^{\times}$ such that $k^m \equiv 1 \pmod{n}$, a \textit{finite split metacyclic action of order $mn$} (written as $m \cdot n$) on $S_g$ is a tuple $(H, (\G,\F))$, where 
$H < \Homeo^+(S_g)$, and $$H = \langle \F,\G \,|\, \F^n = \G^m = 1, \G^{-1}\F\G = \F^k \rangle.$$ We will call the multiplicative class $k$ the \textit{twist factor} of the split metacyclic action $(H, (\G,\F))$. As we are only interested in  non-abelian split metacyclic subgroups, we will assume from here on that $k \neq 1$. Note that in classical notation $H \cong \Z_n \rtimes_k \Z_m$. As $\langle \F \rangle \lhd H$, it is known~\cite{TB,TWT} that $\G$ would induce a $\bar{\G} \in \Homeo^+(\O_{\langle \F \rangle})$ that preserves the set of cone points in $\O_{\langle \F \rangle}$ along with their orders. We will call $\bar{\G}$, \textit{the induced automorphism on $\O_{\langle \F \rangle}$ by $\G$,} and we formalize this notion in the following definition. 

\begin{definition}
\label{defn:ind_auto}
Let $H < \Homeo^+(S_g)$  be a finite cyclic group with $|H| = n$. We say an $\bar{\F} \in \Homeo^+(\O_H)$  is an \textit{automorphism of $\O_H$} if for $[x],[y] \in \O_H$, $k \in \Z_n^{\times}$ and $\bar{\F}([x]) = [y]$, we have: 
\begin{enumerate}[(i)]
\item $n_x = n_y$, and 
\item $k c_x = c_y$. 
\end{enumerate} 
We denote the group of automorphisms of $\O_H$ by $\Aut(\O_H)$.
\end{definition}
\noindent  We note that the concept of an induced orbifold automorphism in Definition~\ref{defn:ind_auto} is more general than the one that was used in the abelian case~(\cite{DR}), which required a more rigid condition that $c_x = c_y$. The following lemma, which provides some basic properties of the induced map $\bar{\G}$, is a split metacyclic analog of~\cite[Lemma 3.1]{DR}. 

\begin{lemma}
\label{lem:ind_aut}
Let $\G,\F \in {\Homeo}^+(S_g)$ be maps of orders $m,n$, respectively, such that $\G^{-1} \F \G = \F^k$, and let $H = \langle \F\rangle$. Then:
\begin{enumerate}[(i)]
\item  $\G$ induces a $\bar{\G}\in \Aut(\O_H)$ such that $$\O_H/\langle \bar{\G}\rangle = S_g/\langle \F,\G\rangle,$$ 
\item $|\bar{\G}| \text{ divides } |\G|$, and 
\item $|\bar{\G}| < m$ if and only if $\F^{l}=\G^u$, for some $0< l<n$ and $0< u< m$.
\end{enumerate}
\end{lemma}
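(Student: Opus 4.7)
The plan is to establish the three items in order, beginning with the construction of $\bar\G$ and verifying that it satisfies Definition~\ref{defn:ind_auto}. Since the relation $\G^{-1}\F\G=\F^k$ shows that $\G$ normalizes $H=\langle\F\rangle$, the map $\G$ permutes the $H$-orbits on $S_g$ and therefore descends to a homeomorphism $\bar\G\in\Homeo^+(\O_H)$. To check the two conditions of Definition~\ref{defn:ind_auto}, I would fix a cone point $[x]\in\O_H$ with preimage $x\in S_g$, set $y=\G(x)$, and observe that the normality of $H$ in $\langle \F,\G\rangle$ gives $\stab_H(y)=\G\,\stab_H(x)\,\G^{-1}=\G\langle\F^{n/n_x}\rangle\G^{-1}$. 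Since $\gcd(k,n)=1$, this conjugate coincides with $\langle\F^{n/n_x}\rangle$, so $n_y=n_x$. For the rotation data, the identity $\F^{n/n_y}=\G\F^{k(n/n_y)}\G^{-1}$ combined with the fact that conjugation by an orientation-preserving homeomorphism preserves local rotation angles implies that the rotation of $\F^{n/n_y}$ at $y$ equals that of $\F^{k(n/n_y)}$ at $x$, which is $k$ times the rotation of $\F^{n/n_x}$ at $x$. Reading off $c$-parameters yields $c_y\equiv k^{-1}c_x\pmod{n_y}$, exhibiting the uniform multiplicative constant $k^{-1}\in\Z_n^\times$ required by Definition~\ref{defn:ind_auto}.

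The quotient identification $\O_H/\langle\bar\G\rangle=S_g/\langle\F,\G\rangle$ in (i) is the standard iterated-quotient argument, since $\G$ is a lift of $\bar\G$ and $\langle H,\G\rangle=\langle\F,\G\rangle$. Part (ii) is immediate from $\bar\G^m=\overline{\G^m}=\bar 1$. For (iii), the condition $|\bar\G|<m$ is equivalent to $\bar\G^u=\bar 1$ for some $0<u<m$, i.e., $\G^u\in\langle\F\rangle$; since $\G^u\neq 1$ when $0<u<m=|\G|$, this is in turn equivalent to the existence of integers $0<u<m$ and $0<l<n$ with $\G^u=\F^l$, and the converse follows by reversing this chain of equivalences.

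The main obstacle is the rotation-data computation in (i): ensuring that one and the same multiplicative factor simultaneously relates $c_x$ and $c_y$ for every pair of cone points identified by $\bar\G$. This is precisely where the split metacyclic case diverges from the abelian setting of~\cite{DR}, where the analogous induced-automorphism notion required the rigid condition $c_x=c_y$. The relation $\G^{-1}\F\G=\F^k$ supplies a single global twist on $\langle\F\rangle$, and the technical content lies in tracking how this algebraic twist descends, via reduction modulo the divisors $n_x\mid n$, to a coherent multiplicative constant on the $c$-parameters of all cone points of $\O_H$.
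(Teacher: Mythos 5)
Your argument is correct and is essentially the proof the paper has in mind --- the paper in fact states Lemma~\ref{lem:ind_aut} without proof, as the split metacyclic analogue of~\cite[Lemma 3.1]{DR}, and your verification of Definition~\ref{defn:ind_auto} via $\stab_H(\G(x))=\G\,\stab_H(x)\,\G^{-1}$ together with the conjugation-invariance of local rotation angles (yielding the uniform multiplier $k^{-1}\in\Z_n^{\times}$) is the standard route, as is the iterated-quotient identification in (i). The only step you assert rather than justify is the implication $\bar{\G}^u=\bar{1}\Rightarrow\G^u\in\langle\F\rangle$ in (iii); this requires the faithfulness of the induced action of $\langle\F,\G\rangle/\langle\F\rangle$ on $\O_H$, which follows in one line by evaluating at a point of $S_g$ with trivial stabilizer (such points exist because the nontrivial elements of the finite group $\langle\F,\G\rangle$ have only finitely many fixed points), so this is a minor omission rather than a genuine gap.
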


We will now formalize the notion of weak conjugacy from Section~\ref{sec:intro}. 

\begin{defn}
\label{defn:weak_conj}
Two finite split metacyclic actions $(H_1, (\G_1, \F_1))$ and $(H_2, (\G_2, \F_2))$ of order $m \cdot n$ and twist factor $k$ are said to be \textit{weakly conjugate} if there exists an isomorphism, $\psi: \pi_1^{\orb}(\O_{H_1}) \cong \pi_1^{\orb}(\O_{H_2})$ and an isomorphism $\chi : H_1 \to H_2$ such that the following conditions hold. 
\begin{enumerate}[(i)]
\item $\chi((\G_1,\F_1)) = (\G_2,\F_2)$.
\item For $i = 1,2$, let $\phi_i: \pi_1^{\orb}(\O_{H_i}) \to H_i$ be the surface kernel (appearing in the exact sequence~(\ref{eq:surf_kern}) in Section~\ref{sec:prelims}). Then $(\chi \circ \phi_{H_1})(g) = (\phi_{H_2} \circ \psi)(g), \text{ whenever } g \in \pi_1^{\orb}(\O_{H_1})$ is of finite order.
\item The pair $(\G_1, \F_1)$ is conjugate (component-wise) to the pair $(\G_2, \F_2)$ in $\Homeo^+(S_g).$
\end{enumerate}
\noindent The notion of weak conjugacy defines an equivalence relation on split metacyclic actions on $S_g$ and the equivalence classes thus obtained will be called \textit{weak conjugacy classes}. 
\end{defn}

\begin{rem}
\label{rem:ext_weak_conj}
By virtue of the Nielsen-Kerckhoff theorem, the notion of weak conjugacy in Definition~\ref{defn:weak_conj} naturally extends to an analogous notion in $\Mod(S_g)$ via the natural association 
$$(\langle \F, \G \rangle,(\G,\F)) \leftrightarrow  (\langle F, G \rangle,(G,F)).$$ 
\end{rem}

%\label{lem:corres}
%There exists a correspondence:
%$$\left \{\begin{array}{c}
%\text{Weak conjugacy classes} \\
%\text{of finite split metacyclic} \\
%\text{actions on } S_g
%\end{array}         
%\right \} \longleftrightarrow
%\left \{ \begin{array}{c} 
%\text{Weak conjugacy classes} \\
%\text{of finite split metacyclic}\\
%\text{subgroups of } \Mod(S_g)
%\end{array}
%\right \}
%$$ 
%via the natural association 
%$$(\langle \F, \G \rangle,(\G,\F)) \leftrightarrow  (\langle F, G \rangle,(G,F)).$$
%\end{lemma}

\noindent For simplicity, we will now introduce the following notation. 
\begin{defn}
Let $F,G \in \Mod(S_g)$ be a finite order map of orders $n,m$ respectively. Then for some $k \in \Z_n^\times \setminus \{1 \}$, we say (in symbols) that $\lb F,G \rb_k = 1$ if there exists conjugates $F',G'$ (of $F,G$ resp.) such that $(G')^{-1} F' G' = (F')^{k}$.
\end{defn}

\noindent We conclude this subsection with the following  crucial remark. 

\begin{rem}
\label{rem:eq_class_part}
Let $H <\Mod(S_g)$ be a finite split metacyclic subgroup, and let $I(H)$ denote the isomorphism class of $H$ (in $\Mod(S_g)$). By Remark~\ref{rem:ext_weak_conj}, we have 
\begin{gather*}
I(H) = \{H' : H' \cong H \text{ and } (H',(G',F')) \text{ represents a weak conjugacy class} \\  \text{for some } F',G' \in H' \text{ such that } H' = \langle F', G' \rangle\}.
\end{gather*}
\noindent Consequently, periodic mapping classes $F,G \in \Mod(S_g)$ satisfy $\lb F,G \rb_k = 1$ if and only if there exists conjugates $F',G'$ (of $F,G$ resp.) such that the triple $(\langle F',G' \rangle, (G',F'))$ represents a weak conjugacy class associated with a finite split metacyclic subgroup (of twist factor $k$) of $\Mod(S_g)$.
\end{rem}

\section{Main theorem}
\label{sec:main}
In this section, we establish the main result of the paper by deriving equivalent conditions under which torsion elements $F,G \in \Mod(S_g)$ would satisfy $\lb F, G \rb_k = 1$. We will introduce an abstract tuple of integers that will capture each weak conjugacy class associated with a finite split metacyclic subgroup of $\Mod(S_g)$.
\begin{defn}
\label{defn:meta_cyc_dataset}
	A \textit{split metacyclic data set of degree $m \cdot n$, twist factor $k$, and genus $g \geq 2$} is a tuple
$$((m \cdot n,k),g_0;[(c_{11},n_{11}),(c_{12},n_{12}),n_1], \cdots , [(c_{\ell 1},n_{\ell 1}),(c_{\ell 2},n_{\ell 2}),n_{\ell} ]),$$
	where $m,n \geq 2$, the $n_{ij}$ are positive integers for $1 \leq i \leq \ell, ~ 1 \leq j \leq 2$, and $k \in \Z_n^{\times}$ such that $k^m \equiv 1 \pmod{n}$, satisfying the following conditions.
	
		\begin{enumerate}[(i)]
		\item $\displaystyle	\frac{2g-2}{mn} = 2g_0 - 2 + \sum_{i=1}^{\ell} \left( 1 - \frac{1}{n_i} \right) .$
		\item \begin{enumerate}
			\item For each $i,j$, $n_{i1} \mid m$, $n_{i2} \mid n$, either $\text{gcd}(c_{ij},n_{ij}) = 1$ or $c_{ij}=0$, and $c_{ij}=0$ if and only if $n_{ij} = 1$.
			\item For each $i$, $n_i = n_{i1} \cdot \beta_i$, where $\beta_i$ is least positive integer such that $$\displaystyle c_{i2} \frac{n}{n_{i2}} \left( \sum_{i' = 0}^{n_{i1} \beta_i -1} k^{c_{i1} \frac{m}{n_{i1}} i'} \right) \equiv 0 \pmod{n}.$$
			 
		\end{enumerate}
	\item  $\displaystyle \sum_{i = 1}^{\ell} c_{i1} \frac{m}{n_{i1}} \equiv 0 \pmod{m}$. 
	\item Defining  $\displaystyle A := \sum_{i=1}^{\ell} c_{i2} \frac{n}{n_{i2}} \prod_{s = i+1}^{\ell} k^{c_{s1} \frac{m}{n_{s1}}}$ and $d:= \gcd(n,k-1)$, we have
	$$A \equiv \begin{cases}
	0 \pmod{n}, & \text{if } g_0=0, \text{ and} \\
	d\theta \pmod{n}, \text{ for } \theta \in \Z_n, & \text{if } g_0 \geq 1.
	\end{cases}$$
	\item If  $g_0 = 0$, there exists $(p_1, \ldots , p_{\ell v}), (q_1, \ldots, q_{\ell v}) \in \Z^{\ell v}$ and $a,v \in \mathbb{N}$ such that the following conditions hold. 
	\begin{enumerate}
	\item $ \displaystyle \sum_{i'=1}^{\ell v} p_{i'} c_{i1} \frac{m}{n_{i1}} \equiv 1 \pmod{m}$ and $$\sum_{i'=1}^{\ell v} c_{i2} \frac{n}{n_{i2}} \left(\sum_{s=1}^{p_{i'}} k^{c_{i1} \frac{m}{n_{i1}}(p_{i'} -s)}\right) \left(\prod_{t' = i'+1}^{\ell v} k^{c_{t1} \frac{m}{n_{t1}}}\right) \equiv 0 \pmod{n}.$$
	\item $\displaystyle \sum_{i'=1}^{\ell v} q_{i'} c_{i1} \frac{m}{n_{i1}} \equiv 0 \pmod{m}$ and 
	$$\sum_{i'=1}^{\ell v} c_{i2} \frac{n}{n_{i2}} \left(\sum_{s=1}^{q_{i'}} k^{c_{i1} \frac{m}{n_{i1}}(q_{i'} -s)}\right) \left(\prod_{t' = i'+1}^{\ell v} k^{c_{t1} \frac{m}{n_{t1}}}\right) \equiv 1 \pmod{n}, \text{ where } $$ 
$$i \equiv
	\begin{cases} 
	i' \pmod{\ell}, & \text{if } i' \neq a \ell, \\
	\ell & \text{otherwise,}\\
	 \end{cases} 
\,\,t \equiv
	\begin{cases} 
	t' \pmod{\ell}, & \text{if } t' \neq a \ell, \text{ and}\\
	\ell, & \text{otherwise.}\\
	 \end{cases}$$
	\end{enumerate}
	\item If  $g_0 = 1$, there exists $(p_1, \ldots , p_{\ell v}), (q_1, \ldots, q_{\ell v}) \in \Z^{\ell v}$ and $m',n' \in \Z, ~ a,v \in \mathbb{N}$ such that $m' \mid m$ and $n' \mid n$, satisfying the following conditions.
	\begin{enumerate}
	\item $ \displaystyle \sum_{i'=1}^{\ell v} p_{i'} c_{i1} \frac{m}{n_{i1}} \equiv m' \pmod{m}$ and $$\sum_{i'=1}^{\ell v} c_{i2} \frac{n}{n_{i2}} \left(\sum_{s=1}^{p_{i'}} k^{c_{i1} \frac{m}{n_{i1}}(p_{i'} -s)}\right) \left(\prod_{t' = i'+1}^{\ell v} k^{c_{t1} \frac{m}{n_{t1}}}\right) \equiv 0 \pmod{n}.$$
	\item $\displaystyle \sum_{i'=1}^{\ell v} q_{i'} c_{i1} \frac{m}{n_{i1}} \equiv 0 \pmod{m},$ 
	$$\sum_{i'=1}^{\ell v} c_{i2} \frac{n}{n_{i2}} \left(\sum_{s=1}^{q_{i'}} k^{c_{i1} \frac{m}{n_{i1}}(q_{i'} -s)}\right) \left(\prod_{t' = i'+1}^{\ell v} k^{c_{t1} \frac{m}{n_{t1}}}\right) \equiv n' \pmod{n},$$ 
	where $$i \equiv 
	\begin{cases} 
	i' \pmod{\ell}, & \text{if } i' \neq a \ell, \\
	\ell, & \text{otherwise,}
	 \end{cases}
	\,\, t \equiv \begin{cases} 
	t' \pmod{\ell} & \text{if } t' \neq a \ell, \text{ and}\\
	\ell & \text{otherwise.}
	 \end{cases}$$
	\item $A \equiv -\beta k^{\alpha} + \beta \pmod{n}$, where $$\displaystyle \lcm \left( \frac{m}{m'},\frac{m}{\gcd(m,\alpha)} \right) = m \text{ and } \lcm \left( \frac{n}{n'},\frac{n}{\gcd(n,\beta)} \right) = n.$$ Furthermore, we set $\alpha = 1$, when $m'=0$, and $\beta =1$, when $n'=0$.
	\end{enumerate}
	\end{enumerate}	 		
\end{defn}

\noindent We will now show that the split metacyclic data sets of genus $g$ are in one-to-one correspondence with the weak conjugacy classes of split metacyclic subgroups of $\Mod(S_g)$.

\begin{prop}
\label{prop:main}
For integers $n,m,g \geq 2$, the split metacyclic data sets of degree $m \cdot n$ with twist factor $k$ and genus $g$ correspond to the weak conjugacy classes of $\Z_n \rtimes_k \Z_m$-actions on $S_g$.
\end{prop}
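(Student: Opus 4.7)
The plan is to extend the strategy used in Lemma~\ref{prop:ds-action} for cyclic actions, now combined with Harvey's criterion (Lemma~\ref{Harvey Condition}) applied to the semidirect product $H = \Z_n \rtimes_k \Z_m$. Fix orbifold generators $\xi_i,\alpha_j,\beta_j$ as in~\eqref{eqn:orb-pres}. The twist relation $\G^{-1}\F\G = \F^k$ in $H$ forces, by induction,
\[
\prod_{i=1}^{\ell}\bigl(\G^{a_i} \F^{b_i}\bigr) = \G^{\sum_i a_i}\, \F^{\sum_{i=1}^{\ell} b_i \prod_{j=i+1}^{\ell} k^{a_j}},
\]
and one checks that the commutator subgroup satisfies $[H,H]=\langle \F^{k-1}\rangle=\langle \F^{d}\rangle$, where $d = \gcd(n,k-1)$. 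These two computations are the engine behind every numerical condition in Definition~\ref{defn:meta_cyc_dataset}.

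For the forward direction, let $\phi_H : \pi_1^{\orb}(\O_H) \to H$ be the surface-kernel epimorphism of an action realizing $(H,(\G,\F))$. I write $\phi_H(\xi_i) = \G^{c_{i1}m/n_{i1}}\F^{c_{i2}n/n_{i2}}$, where $n_{i1}$ is the order of the image of $\phi_H(\xi_i)$ in $H/\langle \F\rangle$ and $(c_{i1},n_{i1})$ encodes the rotation of $\bar{\G}$ at the corresponding point of $\O_{\langle \F\rangle}/\langle \bar{\G}\rangle = \O_H$ via Lemma~\ref{lem:ind_aut}. The integer $\beta_i$ in (ii)(b) is then forced, as it is the order of the $\F$-part of $\phi_H(\xi_i)^{n_{i1}}$; condition (i) is Riemann--Hurwitz; condition (iii) is the vanishing of the $\G$-component of the long relation; and condition (iv) follows by evaluating $\phi_H$ on the long relation and noting that $\phi_H(\prod_j [\alpha_j,\beta_j]) \in [H,H]=\langle \F^d\rangle$, which must vanish when $g_0 = 0$.

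For the reverse direction, given a valid data set, define $\phi_H$ on the $\xi_i$ by the prescribed formula; condition (ii)(b) ensures $|\phi_H(\xi_i)|=n_i$, so torsion orders are preserved, while (iii) and (iv) ensure the long relation is respected. When $g_0 \geq 2$, surjectivity is immediate by setting $\phi_H(\alpha_1) = \G$, $\phi_H(\beta_1) = \F$, with remaining $\alpha_j,\beta_j$ chosen to balance the long relation. For $g_0 = 0$, surjectivity must be witnessed entirely by products of the $\phi_H(\xi_i)$: condition (v)(a) expresses $\G$ and (v)(b) expresses $\F$ as such products, the auxiliary $\ell v$ copies allowing traversal of the generator cycle $v$ times, while the twisted exponent sums $\sum_s k^{c_{i1}(m/n_{i1})(p_{i'}-s)}$ record how each $\F$-exponent is conjugated past the preceding $\G$-powers. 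Condition (vi) handles the transitional $g_0 = 1$ case, where the commutator $[\alpha_1,\beta_1]$ contributes a controlled element $\F^{d\theta}$ to the long relation and the parameters $m',n',\alpha,\beta$ record the coset representatives that $\phi_H(\alpha_1),\phi_H(\beta_1)$ must supply to close up both surjectivity and the long relation. Harvey's lemma then yields a faithful $H$-action on $S_g$, which Nielsen--Kerckhoff lifts uniquely (up to conjugacy in $\Homeo^+(S_g)$) to a mapping-class realization. The induced map on weak conjugacy classes is a bijection because the triple $[(c_{i1},n_{i1}),(c_{i2},n_{i2}),n_i]$ is manifestly invariant under the isomorphisms $\psi$ and $\chi$ of Definition~\ref{defn:weak_conj}, and two data-set-equivalent actions produce surface-kernel epimorphisms that differ precisely by such a pair.

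The main obstacle is proving that conditions (v) and (vi) encode exactly the surjectivity of $\phi_H$ in low genus. For $g_0 = 0$ the orbifold group has no free generators, so every element of the non-abelian group $H$ must be realized as a product of powers of the $\phi_H(\xi_i)$; the non-commutativity $\F\G = \G\F^{k}$ then forces the twisted exponent sums into the congruences. Showing that the existence of solutions $(p_{i'}),(q_{i'})$ is both necessary and sufficient --- and likewise for $g_0 = 1$, where the commutator $[\alpha_1,\beta_1]$ couples the surjectivity conditions to the long relation --- is the most delicate computational step. In particular, one must argue that bounding $v$ and choosing $m',n',\alpha,\beta$ compatibly with the divisibility data of $[H,H]=\langle \F^d\rangle$ is not only consistent but also achievable within the abstract data-set language.
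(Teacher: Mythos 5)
Your overall strategy is the same as the paper's: send $\xi_i \mapsto \G^{c_{i1}\frac{m}{n_{i1}}}\F^{c_{i2}\frac{n}{n_{i2}}}$, read conditions (i)--(iv) of Definition~\ref{defn:meta_cyc_dataset} off Riemann--Hurwitz, order preservation and the long relation, handle $g_0\geq 2$ by $\alpha_1\mapsto\G$, $\beta_1\mapsto\F$, let (v)/(vi) govern surjectivity when $g_0\in\{0,1\}$, and then invoke Harvey's criterion together with Nielsen--Kerckhoff. Your twisted product formula and the identification $[H,H]=\langle\F^{k-1}\rangle=\langle\F^{d}\rangle$ are correct and are indeed the algebra behind conditions (iii)--(vi).

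There is, however, a genuine gap. A weak conjugacy class is more than an equivalence class of surface-kernel epimorphisms: condition (iii) of Definition~\ref{defn:weak_conj} demands that the pair $(\G,\F)$ be determined up to component-wise conjugacy in $\Homeo^+(S_g)$. You dispose of this with the remark that the triples $[(c_{i1},n_{i1}),(c_{i2},n_{i2}),n_i]$ are ``manifestly invariant'' under $\psi$ and $\chi$; that only shows weakly conjugate actions give the same data set, not the converse direction that the data set pins down the conjugacy classes of $\F$ and $\G$. The paper proves this by an explicit computation: from $\D$ it extracts $D_{\bar G}$, then the signature of $\O_{\langle\F\rangle}$ via Lemma~\ref{lem:ind_aut} and hence $D_F$ (with explicit congruences for the resulting rotation data), and finally $D_G$ via the fixed-point count of Lemma~\ref{lem:riem_surf_fix}. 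This step is not optional ornamentation: it is precisely what defines $\D_1$ and $\D_2$, without which Theorem~\ref{thm:main} (the statement $\D_1=D_F$, $\D_2=D_G$) has no content. Separately, you flag but do not resolve the equivalence between surjectivity of $\phi_H$ and conditions (v)/(vi) for $g_0\in\{0,1\}$, calling it the most delicate computational step; the paper is admittedly terse here as well, but since you present it as an open obstacle rather than an argument, your text is an outline of the paper's proof rather than a complete proof.
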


\begin{proof}
Let $\D$ be a split metacyclic data set of degree $m \cdot n$ with twist factor $k$ and genus $g$ (as in Definition~\ref{defn:meta_cyc_dataset} above). We need to show that $\D$ corresponds to the weak conjugacy class of a $\Z_n \rtimes_k \Z_m$-action on $S_g$ represented by $(H,(\G,\F))$. To this effect, we first establish the existence of an epimorphism $\phi_H : \pi_1^{orb} (\O_H) \rightarrow H$ which preserves the order of torsion elements. Let the presentations of $H$ and $\Gamma(\O_H)$ be given by
\begin{gather*} 
H \cong \Z_n \rtimes_k \Z_m = \langle \F,\G\, | \,\F^n = \G^m = 1, \G^{-1}\F\G = \F^{k} \rangle \text{ and} \\
\langle \alpha_1, \beta_1 , \cdots , \alpha_{g_0} , \beta_{g_0}, \xi_1 , \cdots , \xi_{\ell} | \xi_1^{n_1} = \cdots = \xi_{\ell}^{n_\ell} = \prod_{j=1}^{\ell} \xi_j \prod_{i=1}^{g_0}[\alpha_i , \beta_i] = 1 \rangle, 
\end{gather*} respectively.
	We consider the map
	$$\displaystyle \xi_i \xmapsto{\phi_H}  \G^{c_{i1} \frac{m}{n_{i1}}} \F^{c_{i2} \frac{n}{n_{i2}}}, \text{ for } 1 \leq i \leq \ell.$$
	As $|\G^{c_{i1} \frac{m}{n_{i1}}}| = n_{i1}$ and $|\F^{c_{i2} \frac{n}{n_{i2}}}| = n_{i2},$ condition (ii) of Definition~\ref{defn:meta_cyc_dataset} would imply that $\phi_H$ is a map which preserves the order of torsion elements. 
For clarity, we break the argument for the surjectivity of $\phi_H$ into three cases. 

First, we consider the case when $g_0 = 0$. Conditions (iii) and (iv) show that $\phi_H$ satisfies the long relation $\prod_{i=1}^{\ell} \xi_i = 1$ and the surjectivity of $\phi_H$ follows from condition (v). 
	
	When $g_0 \geq 2$, $\pi_1^{orb} (\O_H)$ has additional hyperbolic generators (viewing them as isometries of the hyperbolic plane), namely the $\alpha_i$ and the $\beta_i$. Extending $\phi_H$ by mapping $\alpha_1 \xmapsto{\phi_H} \G, \beta_1 \xmapsto{\phi_H} \F$ yields an epimorphism. Moreover, by carefully choosing the $\alpha_i$ and the $\beta_i$, for $i \geq 2$, conditions (iii) and (iv) would together ensure that the long relation $\prod_{j=1}^{\ell} \xi_j \prod_{i=1}^{g_0}[\alpha_i ,\beta_i] = 1$ is satisfied. 
	
	When $g_0 = 1$, $\pi_1^{orb} (\O_H)$ has two additional hyperbolic generators, namely the $\alpha_1$ and the $\beta_1$. We extend $\phi_H$ by defining $\alpha_1 \xmapsto{\phi_H} \G^{\alpha}$ and $\beta_1 \xmapsto{\phi_H} \F^{\beta}$, and apply conditions (iv) and (vi) to obtain the desired epimorphism.
	
	It remains to show that $\D$ determines $\F,\G \in \Homeo^{+}(S_g)$ up to conjugacy (i.e. condition (iii) of Definition~\ref{defn:weak_conj}). Let $D_{\bar{G}} = (m,g_0;(c_{11},n_{11}), \hdots, (c_{\ell 1},n_{\ell 1}))$
represent the conjugacy class of the action $\bar{\G}$ induced on the orbifold $\O_{\langle \F \rangle}$ by the action $\G \in \Homeo^+(S_g)$. We note that by Lemma~\ref{lem:ind_aut}, $\Gamma(\O_{\langle \F \rangle})$ has the form
$$(g_1;\underbrace{\frac{n_1}{n_{11}}, \hdots \frac{n_1}{n_{11}}}_{\frac{m}{n_{11}} \text{ times}}, \hdots, \underbrace{\frac{n_{\ell}}{n_{\ell 1}}, \hdots, \frac{n_\ell}{n_{\ell1}}}_{\frac{m}{n_{\ell1}} \text{ times}}),$$
where if $n_i/n_{i1} = 1$, for some $1 \leq i \leq \ell$, then we exclude it from the signature, and $g_1 = g(D_{\bar{G}})$ is determined by Equation~(\ref{eqn:riemann_hurwitz}) of Definition~\ref{defn:data_set}. So, we get
\begin{gather*}
\displaystyle D_F = (n,g_1; (d_{11},\frac{n_1}{n_{11}}), \ldots, (d_{1\frac{m}{n_{11}}},\frac{n_1}{n_{11}}) , \hdots ,  \\
(d_{\ell 1},\frac{n_\ell}{n_{\ell1}}), \ldots, (d_{\ell \frac{m}{n_{\ell 1}}},\frac{n_\ell}{n_{\ell 1}}))
\end{gather*}
where $$d_{i1}n_{i1} \equiv c_{i2}\frac{n_i}{n_{i2}} \sum_{j'=1}^{n_{i1}}k^{c_{i1}\frac{m}{n_{i1}}(j' -1)} \pmod{n_i},$$
$$d_{ij_i} \equiv d_{i1} k^{\gamma(j_i -1)} \pmod{\frac{n_i}{n_{i1}}} ~ 1 \leq i \leq l, ~ 1 \leq j_i \leq \frac{m}{n_{i1}},$$
and $\gamma$ is the least positive integer such that $|k| \mid \gamma \frac{m}{n_{i1}}$. Moreover, by applying Lemma~\ref{lem:riem_surf_fix}, we see that 
\begin{gather*}\displaystyle D_G = (m,g_2;((u_{ij},m_i),\frac{m_i |\mathbb{f}_{\G^{\frac{m}{m_i}}}(u_{ij},m_i)|}{m}) : u_{ij} \in \mathbb{Z}_{m_i}^{\times}, \, m_i \mid m, \\  \mathrm{ and} \,\gcd(u_{ij},m_i) = 1),
\end{gather*}
where $$\displaystyle |\mathbb{f}_{\G^{\frac{m}{m_i}}}(u_{ij},m_i)|= |\mathbb{F}_{\G^{\frac{m}{m_i}}}(u_{ij},m_i)| - \sum_{\substack{m_{i'} \in \mathbb{N} \\ m_{i'} \neq m_i \\ m_i | m_{i'} | m}} \sum_{\substack{ (u_{i'j'},m_{i'}) = 1 \\  u_{ij} \equiv u_{i'j'} (\text{mod} \, m_i)}} |\mathbb{f}_{\G^{\frac{m}{m_{i'}}}}(u_{i'j'},m_{i'})|$$ and $g_2$ is determined by Equation~(\ref{eqn:riemann_hurwitz}) of Definition~\ref{defn:data_set}.
	
	Conversely, consider the weak conjugacy class of $\Z_n \rtimes_k \Z_m$-actions on $S_g$ represented by $(H, (\G,\F))$, where $H = \langle \F,\G \rangle$. So, Lemma~\ref{Harvey Condition} would imply that there exists a surjective homomorphism $$\phi_H : \pi_1^{orb}(\O_H) \to H : \displaystyle \xi_i \xmapsto{\phi_H} \G^{c_{i1} \frac{m}{n_{i1}}} \F^{c_{i2} \frac{n}{n_{i2}}}, \text{ for } 1 \leq i \leq \ell, $$
which is order-preserving on the torsion elements. This yields a split metacyclic data set of degree $m \cdot n$ with twist factor $k$ and genus $g$ as in Definition~\ref{defn:meta_cyc_dataset}. By Lemma~\ref{Harvey Condition}, this tuple satisfies condition (i) of Definition~\ref{defn:meta_cyc_dataset}, while condition (ii) follows from the fact that $\phi_H$ is order-preserving on torsion elements. Conditions (iii)-(iv) follow from the long relation satisfied by $\pi_1^{orb}(\O_H)$, and condition (v)-(vi) are implied by the surjectivity of $\phi_H$. Thus, we obtain the split metacyclic data set of degree $m \cdot n$ with twist factor $k$ and genus $g$, and the result follows.
\end{proof}

\noindent We denote the data sets $D_F$ and $D_G$ (representing the cyclic factors of $H$) derived from the split metacyclic data set $\D$ appearing in the proof of Proposition~\ref{prop:main} by $\D_1$ and $\D_2$, respectively. Thus, our main theorem will now follow from Remark~\ref{rem:eq_class_part} and Proposition~\ref{prop:main}.

\begin{theorem}[Main theorem]
\label{thm:main}
Let $F,G \in \Mod(S_g)$ be of orders $n,m$, respectively. Then $\lb F,G \rb_k = 1$ if and only if there exists a split metacyclic data set $\D$ of degree $m \cdot n$, twist factor $k$, and genus $g$ such that $\D_1 = D_F$ and $\D_2 = D_G$.
\end{theorem}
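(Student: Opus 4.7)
The strategy is to deduce Theorem~\ref{thm:main} as a direct translation of Proposition~\ref{prop:main} through the lens of Remark~\ref{rem:eq_class_part}, since most of the substantive work (the surface-kernel construction, the compatibility of the six numerical conditions in Definition~\ref{defn:meta_cyc_dataset} with Harvey's criterion, and the extraction of the cyclic-factor data sets $\D_1, \D_2$ from $\D$) has already been absorbed into that proposition. What remains is the routine bookkeeping identifying the extracted data sets with the given $D_F$ and $D_G$.

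For the forward direction, suppose $\lb F,G\rb_k = 1$. By Remark~\ref{rem:eq_class_part}, there exist conjugates $F',G'$ of $F,G$ such that $(\langle F',G'\rangle,(G',F'))$ represents a weak conjugacy class of a finite split metacyclic subgroup of $\Mod(S_g)$ of order $m\cdot n$ and twist factor $k$. Via the Nielsen-Kerckhoff theorem, we lift these to representatives $\F',\G'\in\Homeo^+(S_g)$ of the same orders. Applying Proposition~\ref{prop:main} to the action $(\langle\F',\G'\rangle,(\G',\F'))$ yields a split metacyclic data set $\D$ of degree $m\cdot n$, twist factor $k$, and genus $g$. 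Because conjugate finite-order mapping classes share the same data set (Lemma~\ref{prop:ds-action}), the cyclic-factor data sets constructed in the first half of the proof of Proposition~\ref{prop:main} satisfy $\D_1 = D_{F'} = D_F$ and $\D_2 = D_{G'} = D_G$.

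For the converse, assume that a split metacyclic data set $\D$ satisfies $\D_1 = D_F$ and $\D_2 = D_G$. The second half of the proof of Proposition~\ref{prop:main} realizes $\D$ as a weak conjugacy class of an action $(\langle\F,\G\rangle,(\G,\F))$ on $S_g$ with $\G^{-1}\F\G = \F^k$, and by construction the cyclic-factor data sets derived from this action are precisely $\D_1$ and $\D_2$. Since $\D_1 = D_F$ and $\D_2 = D_G$, Lemma~\ref{prop:ds-action} combined with the Nielsen-Kerckhoff theorem forces the existence of conjugates $F',G'$ of $F,G$ in $\Mod(S_g)$ that are represented by the same $\F$ and $\G$. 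The relation $\G^{-1}\F\G = \F^k$ then descends to $(G')^{-1}F'G' = (F')^k$, yielding $\lb F,G\rb_k = 1$.

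The only real subtlety---and the place requiring care---is ensuring in the converse that the two conjugates $F'$ and $G'$ live simultaneously inside the single group $\langle\F,\G\rangle$, so that the twist-factor relation is inherited by the pair $(F',G')$ rather than by two unrelated periodic mapping classes. This is guaranteed by the construction in Proposition~\ref{prop:main}, where the normal cyclic subgroup is identified via $\D_1$ and its complementary cyclic factor via $\D_2$ within the same split metacyclic action, so no independent choice of conjugating homeomorphisms is made for $\F$ and $\G$.
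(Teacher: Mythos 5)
Your proposal is correct and follows essentially the same route as the paper: the theorem is deduced directly from Remark~\ref{rem:eq_class_part} together with Proposition~\ref{prop:main}, with $\D_1,\D_2$ identified with $D_F,D_G$ via the conjugacy-invariance of data sets (Lemma~\ref{prop:ds-action}) and the Nielsen--Kerckhoff correspondence. The paper records this deduction in a single sentence, and your write-up simply makes the same bookkeeping explicit, including the point that both conjugates arise from one realized action so the twist relation is inherited by the pair.
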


%\begin{cor} 
%\label{cor:conj_gens}
%For $g \geq 2$, let $\langle F, G \rangle$ be a finite split metacyclic subgroup of $\Mod(S_g)$ with presentation $$\langle F,G\, | \, F^n = G^m = 1, G^{-1}FG = F^{k} \rangle$$ Let the weak conjugacy class of $(\langle F, G\rangle, (G,F))$ be represented by a split metacyclic data set $\D$ as in Definition~\ref{defn:meta_cyc_dataset}. Then $D_F = \D_1$ and $D_G = \D_2$.
%\end{cor}

\noindent We conclude this section with an example of a split metacyclic action of order $16$ on $S_5$. 

\begin{example}
\label{exm:Z4_Z4_S3}
The split metacyclic data set $\D = ((4 \cdot 4,-1),1;[(0,1),(1,2),2])$ encodes the weak conjugacy class of a $\Z_4 \rtimes_{-1} \Z_4$-action on $S_5$ represented by $(\langle \F, \G\rangle, (\G,\F))$, where $$D_F = (4,1;(1,2),(1,2),(1,2),(1,2)) \text{ and }D_G = (4,2,1;).$$
The geometric realization of this action is illustrated in Figure~\ref{fig:Z4Z4_S5} below. 
 \begin{figure}[H]
	\centering
	\labellist
	\tiny
	\pinlabel $(1,2)$ at 115 75
	\pinlabel $(1,2)$ at 115 11
	\pinlabel $(1,2)$ at 115 140
	\pinlabel $(1,2)$ at 115 225
	\pinlabel $\G$ at 218 195
	\pinlabel $\frac{\pi}{2}$ at 210 177
	\pinlabel $(1,2)$ at 88 105
	\pinlabel $(1,2)$ at -15 105
	\pinlabel $(1,2)$ at 140 100
	\pinlabel $(1,2)$ at 240 100
	\pinlabel $(1,4)$ at 180 200
	\pinlabel $(1,4)$ at 50 205
	\pinlabel $(1,4)$ at 180 10
	\pinlabel $(1,4)$ at 50 10
	\pinlabel $(3,4)$ at 5 55
	\pinlabel $(3,4)$ at 220 50
	\pinlabel $(3,4)$ at 5 155
	\pinlabel $(3,4)$ at 220 150
	\endlabellist
	\includegraphics[width=33ex]{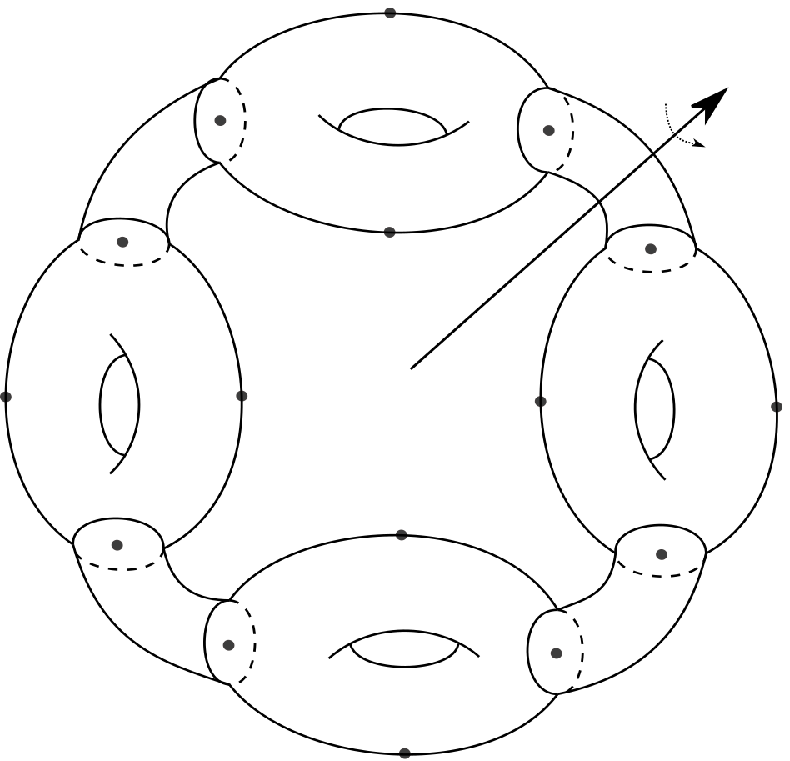}
	\caption{Realization of a $\mathbb{Z}_4 \rtimes_{-1} \mathbb{Z}_4$-action in $\Mod(S_5)$.}
	\label{fig:Z4Z4_S5}
	\end{figure}
\end{example}
\noindent Note that the pairs of integers appearing in Figure~\ref{fig:Z4Z4_S5} represent the compatible orbits involved in the realization of $\F$. Here, the action $\F$ is realized via two 1-compatibilities  between the action $\F'$ on two copies of $S_2$ with $D_{F'} = (4,0;((1,2),2),(1,4),(3,4))$. Furthermore, the action $\F'$ is realized by a 1-compatibility between the action $\F''$ on two copies of $S_1$ with $D_{F''} = (4,0;(1,2),(1,4),(3,4))$.
\section{Applications}

\label{sec:appl}

\subsection{Dihedral groups}
\label{subsec:dihed}
Let $D_{2n} = \Z_n \rtimes_{-1} \Z_2$ be the dihedral group of order $2n$. We will call a split metacyclic data set of degree $2\cdot n$ and twist factor $-1$ a \textit{dihedral data set}. A simple computation reveals that a dihedral data set $$((2 \cdot n,-1),g_0;[(c_{11},n_{11}),(c_{12},n_{12}),n_1], \cdots , [(c_{\ell 1},n_{\ell 1}),(c_{\ell 2},n_{\ell 2}),n_{\ell} ]),$$ would have the property that $(c_{j1},n_{j1}) \in \{(0,1),(1,2)\}$, for $1 \leq j \leq \ell$. The following is an immediate consequence of Proposition~\ref{prop:main}. 

\begin{cor}
For $g \geq 2$ and $n \geq 3$, dihedral data sets of degree $2\cdot n$ and genus $g$ correspond to the weak conjugacy classes of $D_{2n}$-actions on $S_g$.
\end{cor}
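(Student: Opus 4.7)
The plan is to specialize Proposition~\ref{prop:main} to the case $m = 2$ and $k = -1$. First I would verify that this choice of parameters satisfies the standing hypotheses of Definition~\ref{defn:meta_cyc_dataset}: for every $n \geq 2$, one has $-1 \in \Z_n^{\times}$, and $(-1)^2 \equiv 1 \pmod{n}$, so the twist factor $k = -1$ is admissible. The restriction $n \geq 3$ in the hypothesis is precisely what guarantees $-1 \not\equiv 1 \pmod{n}$, placing us in the non-abelian regime assumed throughout Section~\ref{sec:main}, so that $\Z_n \rtimes_{-1} \Z_2 \cong D_{2n}$.

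Next I would unpack the constraints on the entries $(c_{j1}, n_{j1})$ of a split metacyclic data set when $m = 2$. Condition~(ii)(a) of Definition~\ref{defn:meta_cyc_dataset} requires $n_{j1} \mid m = 2$, so $n_{j1} \in \{1,2\}$. If $n_{j1} = 1$, the same condition forces $c_{j1} = 0$; if $n_{j1} = 2$, then $c_{j1} \in \Z_2^{\times}$, hence $c_{j1} = 1$. Thus $(c_{j1}, n_{j1}) \in \{(0,1),(1,2)\}$ in every entry, which is exactly the restriction noted immediately before the statement and which justifies the terminology \emph{dihedral data set}.

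With these specializations in place, the correspondence asserted in the corollary is now a direct instance of Proposition~\ref{prop:main}: split metacyclic data sets of degree $2 \cdot n$, twist factor $-1$, and genus $g$ are in bijection with weak conjugacy classes of $(\Z_n \rtimes_{-1} \Z_2)$-actions on $S_g$, i.e. $D_{2n}$-actions on $S_g$. The only subtlety worth checking in the plan is the internal consistency of conditions~(v) and~(vi) of Definition~\ref{defn:meta_cyc_dataset} under $m = 2$ and $k = -1$ (in particular the arithmetic of $d = \gcd(n, k-1) = \gcd(n,2)$ in condition~(iv)), but no new content enters: these constraints are inherited verbatim from Proposition~\ref{prop:main}. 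No separate argument is needed, so the proof reduces to this specialization.
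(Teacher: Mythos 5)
Your proposal is correct and matches the paper's own treatment: the paper defines dihedral data sets exactly as split metacyclic data sets of degree $2\cdot n$ and twist factor $-1$, records the same "simple computation" that each $(c_{j1},n_{j1})\in\{(0,1),(1,2)\}$, and then states the corollary as an immediate consequence of Proposition~\ref{prop:main}. Your added checks (admissibility of $k=-1$, and $n\geq 3$ ensuring the non-abelian case $\Z_n\rtimes_{-1}\Z_2\cong D_{2n}$) are exactly the routine specializations the paper leaves implicit.
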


\noindent The following proposition provides an alternative characterization of a $D_{2n}$-action in terms of the generator of its factor subgroup of order $n$.

\begin{prop}
	Let $F \in \Mod(S_g)$ be of order $n$. Then there exists an involution $G \in \Mod(S_g)$ such that $\langle F, G \rangle \cong D_{2n}$ if and only if $D_F$ has the form 
	\[(n,g_0,r; ((c_1,n_1),(-c_1,n_1), \ldots, (c_{s},n_{s}),(-c_{s},n_{s})). \tag{*}\] 
\end{prop}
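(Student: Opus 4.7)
The plan is to combine Lemma~\ref{prop:ds-action} (the data-set classification of cyclic-action conjugacy classes) with Theorem~\ref{thm:main} to handle both directions.

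For the forward direction, assume $\langle F,G\rangle \cong D_{2n}$ with $G$ an involution. For $n\geq 3$, $\langle F\rangle$ is the unique cyclic subgroup of order $n$ in $D_{2n}$, so $G$ lies outside $\langle F\rangle$, is a reflection, and satisfies $GFG = F^{-1}$; for $n=2$ the same relation holds trivially since $F = F^{-1}$. Hence $F\sim F^{-1}$ in $\Mod(S_g)$, and by Lemma~\ref{prop:ds-action} we have $D_F = D_{F^{-1}}$. Inverting a standard representative negates every local rotation angle, so each pair $(c_i,n_i)$ in $D_F$ corresponds to $(-c_i\bmod n_i, n_i)$ in $D_{F^{-1}}$ (and $r\mapsto n-r$ in the free-rotation case). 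Equality of multisets therefore forces the cone-pair multiset of $D_F$ to be invariant under $c\mapsto -c$, which is precisely form (*); self-paired $(1,2)$-points must occur with even multiplicity, and any $r>0$ must satisfy $2r\equiv 0\pmod n$.

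For the backward direction, given $D_F$ in form (*), I construct a dihedral data set $\D$ of degree $2\cdot n$, twist factor $-1$, and genus $g$ with $\D_1 = D_F$, and then invoke Theorem~\ref{thm:main}; the desired involution $G$ is realized by $\D_2$. The symmetric form of $D_F$ yields a natural involution $\bar{G}$ of $\O_{\langle F\rangle}$ that swaps each non-self-paired pair $\{(c_i,n_i),(-c_i,n_i)\}$ with $n_i>2$, swaps the self-paired $(1,2)$-cone-points in pairs, and (when needed) fixes some smooth points to meet the Riemann--Hurwitz requirement. The quotient orbifold $\O_H := \O_{\langle F\rangle}/\langle \bar{G}\rangle$ then prescribes the entries of $\D$: each non-self-paired swap contributes $[(0,1),(c_i,n_i),n_i]$, each swapped $(1,2)$-pair contributes $[(0,1),(1,2),2]$, and each $\bar{G}$-fixed smooth point contributes $[(1,2),(0,1),2]$. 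With $m=2$ and $k=-1$, conditions (i)--(iv) of Definition~\ref{defn:meta_cyc_dataset} are routine checks: (i) is Riemann--Hurwitz, (ii) is immediate since each $n_{i1}\in\{1,2\}$, and the alternating sums in (iii)--(iv) collapse under $k=-1$ and the $c\leftrightarrow -c$ symmetry.

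\textbf{Main obstacle.} The principal technical difficulty lies in verifying the surjectivity conditions (v)--(vi) in the backward direction when $g_0^H=0$: here $\phi_H$ has no hyperbolic generators available, so a reflection in $D_{2n}$ must come from a cone-generator with $n_{i1}=2$. I expect to handle this by a case analysis showing that $\bar{G}$ can always be arranged with enough swapped self-paired pairs or fixed smooth points to supply a reflection, and then by explicitly producing the integer tuples $(p_{i'}),(q_{i'})$ demanded by condition (v). Edge cases — for instance, $D_F$ consisting only of $(1,2)$-pairs, or $D_F$ containing only one orbit type — require direct construction, whereas the case $g_0^H\geq 1$ is straightforward since the hyperbolic generators of $\pi_1^{\orb}(\O_H)$ may be mapped into $D_{2n}$ to ensure surjectivity while the long relation is satisfied using condition (iv).
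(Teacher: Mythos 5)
Your backward direction follows the paper's strategy (build an involution $\bar{\G}$ of $\O_{\langle \F \rangle}$ swapping the paired cone points, write down a dihedral data set for the quotient, and invoke Theorem~\ref{thm:main}), but as sketched it has a genuine gap exactly where the paper does its real work. The quotient data you propose — one triple $[(0,1),(c_i,n_i),n_i]$ per swapped pair, $[(0,1),(1,2),2]$ per swapped $(1,2)$-pair, and $[(1,2),(0,1),2]$ per fixed point of $\bar{\G}$ — does not in general satisfy condition (iv) of Definition~\ref{defn:meta_cyc_dataset}: with $k=-1$ the reflection triples contribute nothing to $A$, and only \emph{one} member of each pair $\{c_i,-c_i\}$ survives in the quotient, so $A\equiv\sum_i \pm\, c_i\frac{n}{n_i}\pmod{n}$, which is generically nonzero. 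For instance, for $D_F=(3,1;(1,3),(2,3))$ (the paper's $D_6$-action on $S_3$) your tuple would give $A\equiv \pm 1\pmod 3$; the claimed ``collapse under the $c\leftrightarrow -c$ symmetry'' is simply not available downstairs. The paper repairs this by decorating two of the $t=2g_0+2$ reflection triples, i.e.\ sending two fixed points of the hyperelliptic involution to reflections of the form $\G\F^{j}$ with $j$ chosen to kill the sum (compare $[(1,2),(1,3),2]$ in its $D_6$ example), and this same decoration — together with $\lcm(n_1,\hdots,n_s)=n$ from Definition~\ref{defn:data_set}(iii) when $g_0=0$, and a $(1,n)$ entry when $g_0>0$ — is what makes the surjectivity condition (v) checkable with explicit $(p_{i'}),(q_{i'})$. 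Since you explicitly defer condition (v) as the ``main obstacle'' and your (iii)--(iv) verification is incorrect as stated, the backward direction is not yet a proof. Note also that the paper always quotients by the hyperelliptic involution, so $\O_H$ has genus $0$ and condition (vi) never enters; your contemplated $g_0^H\geq 1$ cases are an unnecessary complication.

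Your forward direction is a genuinely different and more elementary route than the paper's (which just cites Remark~\ref{rem:ext_weak_conj} and Proposition~\ref{prop:main} together with the explicit form of $\D_1$ computed there): deducing $F\sim F^{-1}$, hence $D_F=D_{F^{-1}}$, and reading off invariance of the cone-pair multiset under $c\mapsto -c$ is correct and arguably cleaner. But be aware that such invariance is not literally form $(*)$: you must still rule out odd multiplicity of the self-paired $(1,2)$ points, which requires Definition~\ref{defn:data_set}(iv) (once the other points pair off, $\sum_j \frac{n}{n_j}c_j\equiv 0\pmod n$ forces the number of $(1,2)$ points to be even), and in the free-rotation case $D_F=(n,g_0,r;)$ your condition $2r\equiv 0\pmod n$ combined with $\gcd(r,n)=1$ forces $n=2$ — a point worth stating explicitly, since $(*)$ as printed places no constraint on $r$. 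With those two justifications added, your forward direction stands on its own; the backward direction needs the corrective decoration and the explicit surjectivity check before it matches the paper.
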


\begin{proof}
Suppose that $D_F$ has the form $(*)$. Then $\O_{\langle \F \rangle}$ is an orbifold of genus $g_0$ with $2s$ cone points $[x_1],[y_1],\ldots, [x_s],[y_s]$, where $\P_{x_i} = (c_i,n_i)$ and 
  $\P_{y_i} = (-c_i,n_i)$, for $1 \leq i \leq s$. Up to conjugacy, let $\bar{\G} \in \Aut( \O_{\langle \F \rangle})$ be the hyperelliptic involution so that $\bar{\G}([x_i])= [y_i]$, for $1 \leq i \leq s$. To prove our assertion, it would suffice to show the existence of an involution $\G \in \Homeo^+(S_g)$ that induces $\bar{\G}$. This amounts to show that there exists a split metacyclic data set $\D$ of degree $2 \cdot n$ with twist factor $-1$ encoding the weak conjugacy class $(H,(\G,\F))$ so that $D_G$ has degree $2$. Consider the tuple
\begin{gather*} 
\D = ((2 \cdot n, -1),0; \underbrace{[(1,2),(0,1),2], \hdots , [(1,2),(0,1),2]}_{t-2 \text{ times}}, [(1,2),(c_{(t-1)2},n_{(t-1)2}),2] \\ [(1,2),(c_{t2},n_{t2}),2],  [(0,1),(c_{1},n_{1}),n_1], \hdots , [(0,1),(c_{s},n_{s}),n_s]), 
\end{gather*}
where $t= 2g_0+2,$ $$\small ((c_{(t-1)2},n_{(t-1)2}),  (c_{t2},n_{t2})) = \begin{cases}
((0,1), (1, -\sum_{i=1}^{s} c_i \frac{n}{n_i} (\text{mod}\,n))), & \text{if } g_0 = 0, \text{ and} \\
((1,n) ,(1,1-\sum_{i=1}^{s} c_i \frac{n}{n_i} (\text{mod} \, n))), & \text{if } g_0 > 0.
\end{cases}$$
\noindent It follows immediately that $\D$ satisfies conditions (i)-(iv) of Definition~\ref{defn:meta_cyc_dataset}.  As $t \geq 2$, by taking $v =1$, we may choose $(p_1, \hdots , p_{t+s}) = (1,0, \hdots , 0)$ to conclude that $\D$ also satisfies condition (v)(a). Since $t = 2 \iff g_0 = 0$, and when $g_0 = 0$, we have that $\text{lcm}(n_1, \hdots , n_s) = n$, from which condition (v)(b) follows. Finally, for the case when $g_0 \neq 0$, (v)(b) follows by choosing $(q_1, \hdots, q_{t-2}, q_{t-1}, \hdots , q_{t+s}) = (0, \hdots ,-1, 1, \hdots, 0)$. Thus, it follows that $\D$ is a split metacyclic data set. Further, a direct application of Theorem~\ref{thm:main} would show that $\D$ indeed encodes the weak conjugacy represented by $(H,(\G,\F))$, as desired. 

The converse follows immediately from Remark~\ref{rem:ext_weak_conj} and Proposition~\ref{prop:main}.
\end{proof}

We now provide a couple of examples of dihedral actions along with their realizations.

\begin{example}
Consider the $ \mathbb{Z}_3 \rtimes_{-1} \mathbb{Z}_2$-action $\langle \F, \G \rangle$ on $S_3$ illustrated in Figure~\ref{fig:D6_S3} below, where $$D_F = (3,1;(1,3),(2,3)) \text{ and }D_G = (2,1;(1,2),(1,2),(1,2),(1,2)).$$
\begin{figure}[H]
	\centering
	\labellist
	\small
	\pinlabel $\F$ at 173 52
	\pinlabel $\G$ at 90 180
	\pinlabel $\frac{2 \pi}{3}$ at 172 35
	\pinlabel $\pi$ at 102 168
	\endlabellist
	\includegraphics[width=27ex]{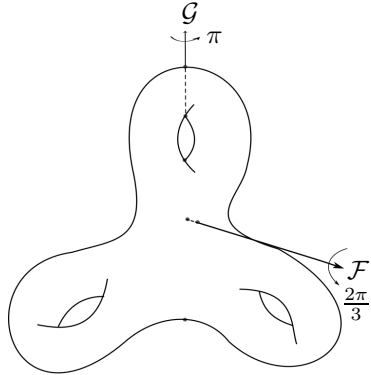}
	\caption{Realization of a $D_6$-action in $\Mod(S_3)$.}
	\label{fig:D6_S3}
	\end{figure}
\noindent The weak conjugacy class of the action $(\langle \F ,\G \rangle, (\G,\F))$ is encoded by \begin{gather*}
\D = ((2 \cdot 3,-1),0;[(1,2),(0,1),2],[(1,2),(0,1),2],[(1,2),(0,1),2], \\ [(1,2),(1,3),2],[(0,1),(2,3),3]).
\end{gather*}
\end{example}

\begin{example}
Consider the $\mathbb{Z}_4 \rtimes_{-1} \mathbb{Z}_2$-actions $\langle \F, \G \rangle$ and $\langle \F, \G' \rangle$ on $S_3$ illustrated in Figure~\ref{fig:D8_S3} below, where $D_F = (4,0;(1,4),(3,4),(1,4),(3,4))$, $D_G = (2,1;(1,2),(1,2),(1,2),(1,2))$, and $D_{G'} = (2,2,1;).$ 
\begin{figure}[H]
	\centering
	\labellist
     \small
	\pinlabel $\F$ at 110 455
	\pinlabel $\G$ at 240 150
	\pinlabel $\frac{\pi}{2}$ at 135 430
	\pinlabel $\pi$ at 225 125
	\pinlabel $\G'$ at -10 100
	\pinlabel $\pi$ at 25 80
	\endlabellist
	\includegraphics[width=20ex]{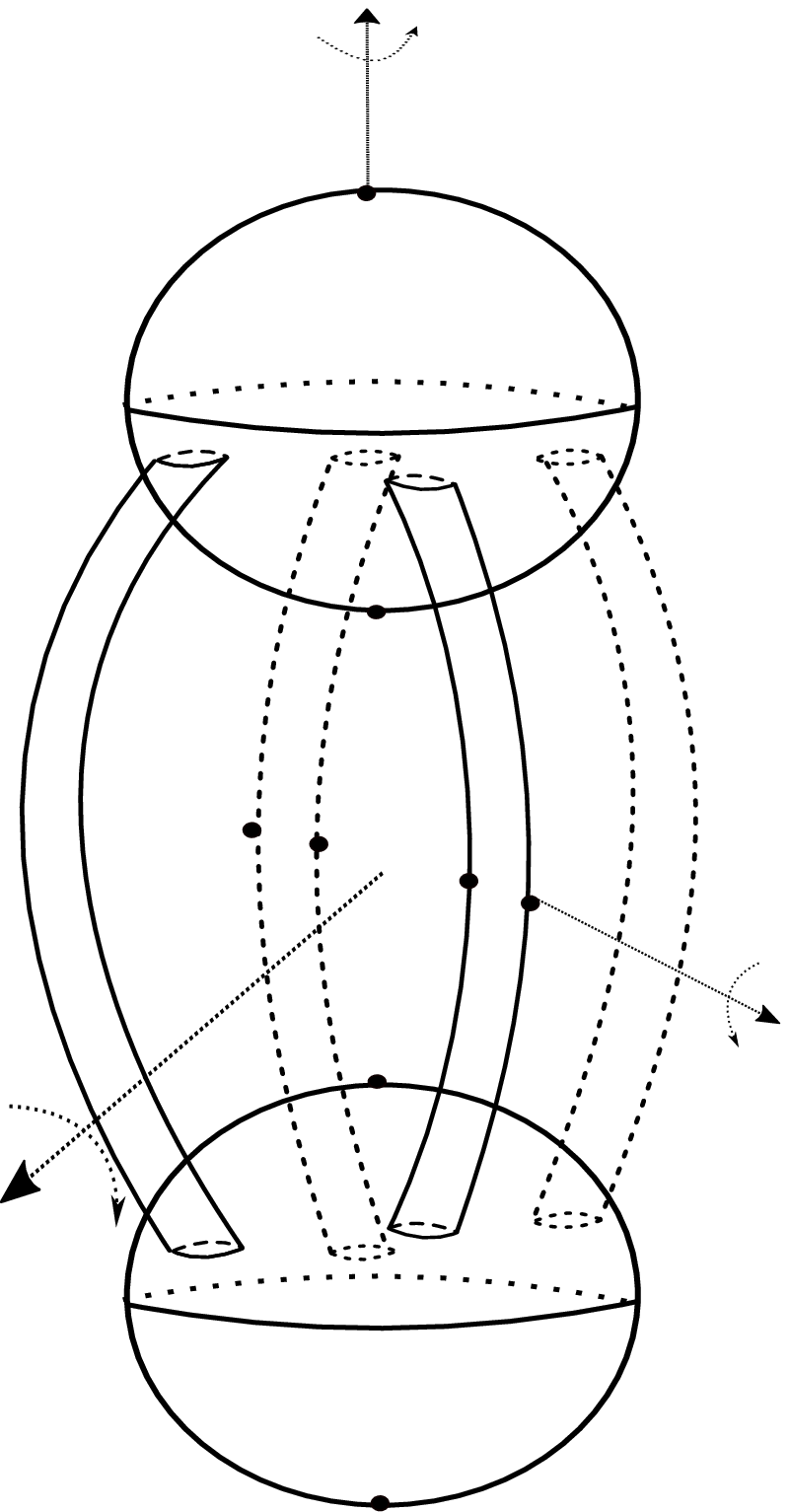}
	\caption{Realization of a $D_8$-action in $\Mod(S_3)$.}
	\label{fig:D8_S3}
	\end{figure}
	
	\noindent The weak conjugacy classes $(\langle \F ,\G \rangle, (\G,\F))$ and $(\langle \F ,\G' \rangle, (\G',\F))$ are encoded by $$((2 \cdot 4,-1),0;[(1,2),(0,1),2],[(1,2),(0,1),2],[(0,1),(1,4),4],[(0,1),(3,4),4])$$ and $$((2 \cdot 4,-1),0;[(1,2),(1,4),2],[(1,2),(1,4),2],[(0,1),(1,4),4],[(0,1),(3,4),4]),$$ respectively.
\end{example}

\subsection{Generalized quaternions}
\label{subsec:quater}
For $n \geq 2$, the generalized quaternion group $Q_{2^{n+1}}$ is a metacyclic group of order $2^{n+1} $ that admits the presentation
$$\langle x,y \, | \, x^{2^n} = y^4 =1, \,  x^{2^{n-1}} = y^2, \, y^{-1}xy = x^{-1}\rangle.$$ 

\begin{rem} 
\label{rem:no_free_inv}
Let $\D$ be a split metacyclic data set of genus $g$, degree $4 \cdot 2^n$ and twist factor $-1$ (as in Definition~\ref{defn:meta_cyc_dataset}) encoding a weak conjugacy class represented by $(H,(\G,\F))$. Suppose that $\D$ has the property that $[(c_{j1}, n_{j1}),(c_{j2}, n_{j2}), n_j] = [(1,2), (1,2), 2]$, for some $1 \leq j \leq \ell$. Then it follows from the proof of Proposition~\ref{prop:main} that under the epimorphism $\phi_H : \pi_1^{orb}(\O_H) \to H$ which preserves the order of torsion elements, the tuple $[(1,2), (1,2), 2]$ would correspond to an involution $\G^{2}\F^{2^{n-1}}\in H$ which defines a non-free action on $S_g$.
\end{rem} 

\noindent Remark~\ref{rem:no_free_inv} motivates the following definition. 
\begin{defn}
\label{defn:gen_quat_dataset}
A \textit{quaternionic data set} is a split metacyclic data set of degree $4 \cdot 2^n$ that has the form
$$\D = ((4 \cdot 2^n,-1), g_0; [(c_{11}, n_{11}),(c_{12}, n_{12}), n_1], \hdots ,[(c_{\ell 1}, n_{\ell 1}), (c_{\ell 2}, n_{\ell 2}), n_\ell] ),$$ such that $[(c_{j1}, n_{j1}),(c_{j2}, n_{j2}), n_j] \neq [(1,2), (1,2), 2]$, for $1 \leq j \leq \ell.$ 
\end{defn}

\begin{prop}
\label{prop:quat_action}
For $g,n \geq 2$, quaternionic data sets of genus $2g-1$ and degree $4 \cdot 2^n$ correspond to $Q_{2^{n+1}}$-actions on $S_g$. 
\end{prop}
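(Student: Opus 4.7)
The plan is to bridge quaternionic data sets and $Q_{2^{n+1}}$-actions through the central extension
\[1 \to \langle z \rangle \to \Z_{2^n} \rtimes_{-1} \Z_4 \xrightarrow{\pi} Q_{2^{n+1}} \to 1,\]
where $z := \tilde{\G}^2 \tilde{\F}^{2^{n-1}}$. A direct computation verifies that $z$ is a central involution of $\Z_{2^n} \rtimes_{-1} \Z_4$ and that quotienting by $\langle z \rangle$ identifies $\tilde{\G}^2$ with $\tilde{\F}^{2^{n-1}}$, yielding the defining relations of $Q_{2^{n+1}}$. My goal is then to show that quaternionic data sets of degree $4\cdot 2^n$ and genus $2g-1$ encode precisely those $\Z_{2^n}\rtimes_{-1}\Z_4$-actions on $S_{2g-1}$ in which $z$ acts freely, and that such actions correspond bijectively to $Q_{2^{n+1}}$-actions on $S_{2g-1}/\langle z\rangle \cong S_g$ (the genus identification follows from Riemann-Hurwitz, $\chi(S_{2g-1}) = 2\chi(S_g)$).

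To detect the free-$z$ condition combinatorially, I would analyze the surface-kernel epimorphism $\phi_H$ from the proof of Proposition~\ref{prop:main}. A short case analysis of cyclic subgroups of $\Z_{2^n}\rtimes_{-1}\Z_4$ shows that $\langle z\rangle$ is the unique cyclic subgroup containing $z$: subgroups generated by elements of the form $\tilde{\F}^a\tilde{\G}$ or $\tilde{\F}^a\tilde{\G}^3$ have order $4$ with unique involution $\tilde{\G}^2 \neq z$; those generated by $\tilde{\F}^a$ or $\tilde{\F}^a\tilde{\G}^2$ with $\tilde{\F}^a$ of order at least $4$ have unique involution $\tilde{\F}^{2^{n-1}} \neq z$; and the only remaining case collapses to $\langle z\rangle$ itself. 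Since the isotropy of the cone point associated with a tuple $[(c_{j1},n_{j1}),(c_{j2},n_{j2}),n_j]$ is $\langle\phi_H(\xi_j)\rangle$, it contains $z$ if and only if this tuple equals $[(1,2),(1,2),2]$. Hence the quaternionic condition is equivalent to $z$ acting freely on $S_{2g-1}$.

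Combining the above with Proposition~\ref{prop:main} yields the correspondence. Forward: a quaternionic data set gives a weak conjugacy class of $\Z_{2^n}\rtimes_{-1}\Z_4$-actions on $S_{2g-1}$ with $z$ free, whose quotient by $\langle z\rangle$ is a $Q_{2^{n+1}}$-action on $S_g$. Backward: a $Q_{2^{n+1}}$-action on $S_g$ lifts along $\pi$ to a $\Z_{2^n}\rtimes_{-1}\Z_4$-action on the $2$-sheeted cover $S_{2g-1}\to S_g$ with $\langle z\rangle$ as free deck group, whose data set, by the previous paragraph, is quaternionic. The main obstacle will be justifying the backward lifting step: given an order-preserving surjection $\phi_{Q_{2^{n+1}}}$, one must produce an order-preserving lift $\phi_{\tilde{H}}$, which I expect to follow by checking that the preimage under $\pi$ of every non-trivial torsion element of $Q_{2^{n+1}}$ contains two elements of matching order, so that the long relation can always be arranged to hold modulo $\langle z\rangle$. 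The verification that both operations respect weak conjugacy (Definition~\ref{defn:weak_conj}) is then automatic because $\langle z\rangle$ is canonically determined as the unique central $\Z_2$ in $\Z_{2^n}\rtimes_{-1}\Z_4$ whose quotient is $Q_{2^{n+1}}$.
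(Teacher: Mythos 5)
Your proposal is correct and takes essentially the same route as the paper: the paper also works with the central extension $1 \to \langle \G^{2}\F^{2^{n-1}}\rangle \to \Z_{2^n}\rtimes_{-1}\Z_4 \xrightarrow{q} Q_{2^{n+1}} \to 1$, uses that $q$ preserves orders off its kernel to lift and descend the surface-kernel epimorphisms (adjusting preimage choices so the long relation holds), identifies the exclusion of the tuple $[(1,2),(1,2),2]$ with freeness of the kernel action (its Remark~\ref{rem:no_free_inv}), and obtains the genus $2g-1$ from the free $\Z_2$-action via Riemann--Hurwitz. Your explicit verification that $\langle \G^{2}\F^{2^{n-1}}\rangle$ is the unique cyclic subgroup containing this central involution simply spells out the point the paper delegates to that remark.
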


\begin{proof}
Suppose that there exists an action of $H=Q_{2^{n+1}}$ on $S_g$. By Lemma~\ref{Harvey Condition}, there exists an epimorphism $\phi_H : \pi_1^{orb}(\O_H) \to H$
$$\displaystyle \xi_i \xmapsto{\phi_H} y^{c_{i1} \frac{m}{n_{i1}}} x^{c_{i2} \frac{n}{n_{i2}}}, \text{ for } 1 \leq i \leq \ell,$$ that is order-preserving on torsion elements. Let $H' = \Z_{2^n} \rtimes_{-1} \Z_4$. Since the canonical projection $q: H' \to H (\cong H'/\Z_2)$ which preserves the order of torsion elements on $H'\setminus \ker q$, the map $\phi_H$ naturally factors via $q$. Thus, as there are exactly two possible choices for $\phi_H \vert_{\{\xi_{i}: 1 \leq i \leq \ell\}}$ that preserves the order, at least one of which yields an action $H'$ on $S_{g'}$ (for some $g' > g$). A weak conjugacy class associated with this action is encoded by a split metacyclic data set of genus $g'$ and degree $2^{n+2} = 4\cdot 2^n$, which has one of the following forms
\begin{gather*}
((4 \cdot 2^n,-1), g_0; [(c_{11}, n_{11}),(c_{12}, n_{12}), n_1], \hdots ,[(c_{\ell 1}, n_{\ell 1}), (c_{\ell 2}, n_{\ell 2}), n_\ell] ) \\ \text{ or } \\ 
(4 \cdot 2^n,-1), g_0; [(c_{11}, n_{11}),(c_{12}, n_{12}), n_1], \hdots ,[(c_{\ell 1}', n_{\ell 1}'), (c_{\ell 2}', n_{\ell 2}'), n_\ell] ),
\end{gather*} 
where $c_{\ell 1}' \frac{4}{n_{\ell 1}'} \equiv c_{\ell 1} \frac{4}{n_{\ell 1}} +2 \pmod{4} \text{ and } c_{\ell 2}' \frac{2^n}{n_{\ell2}'} \equiv c_{\ell 2} \frac{2^n}{n_{\ell 2}} + 2^{n-1} \pmod{2^n}.$
Further, since $\ker q \cong \Z_2$ and $q$ preserves the orders of all $x\in H' \setminus \ker q$, it follows that $\ker q$ acts freely on $S_{g'}$. Hence, it follows that $g' = 2g-1$ and further by Remark~\ref{rem:no_free_inv}, both (possible) tuples cannot contain a triple of the type $[(1,2), (1,2), 2]$.

Conversely, if there exists a quaternionic data set $\D$ of genus $g' = 2g-1$ as in Definition~\ref{defn:gen_quat_dataset}. Then we obtain an epimorphism $\phi_{H'}: \pi_1^{orb}(\O_{H'}) \to H'$ which preserves the order of torsion elements, when composed with canonical projection $q: H' \to H$, yields an epimorphism $\phi_H: \pi_1^{orb}(\O_{H}) \to H$ which preserves the order of torsion elements, where $\pi_1^{orb}(\O_{H'}) = \pi_1^{orb}(\O_{H})$. Further, as $\D$ does not contain a triple of type $[(1,2), (1,2), 2]$, $\ker q$ acts freely on $S_{g'}$, thereby yielding an action of $Q_{2^{n+1}}$ on $S_g$, where $g' = 2g-1$.
\end{proof}

\begin{rem}
A crucial step in the proof (of Proposition~\ref{prop:quat_action}) is the establishment of the fact that the canonical projection $q : \Z_{2^n} \rtimes_{-1} \Z_4 \rightarrow Q_{2^{n+1}}$ is order-preserving. However, it is interesting to note that this fact does not generalize to arbitrary metacyclic groups~\cite{CEH} arising as quotients of split metacyclic groups. This motivates the study of finite non-split metacyclic actions on surfaces, which we plan to undertake in future works. 
\end{rem}

\begin{example}
The split metacyclic data set in Example~\ref{exm:Z4_Z4_S3} is quaternionic. Hence, this represents the weak conjugacy class of an induced $Q_{8}$-action on $S_3$.
\end{example}

\subsection{Lifting cyclic subgroups of mapping classes to split metacyclic groups} 
\label{subsec:liftable}
For $n,g \geq 2$, let $p : S_{\tilde{g}} \to S_g$ be a covering map (that is possibly branched) with deck transformation group $\Z_n \cong \langle \F \rangle$. Let $\LMod_p(S_g)$ (resp. $\SMod_p(S_{\tilde{g}})$) denote the liftable (resp. symmetric) mapping class groups of $S_g$ (resp. $S_{\tilde{g}}$) under $p$.

\begin{rem}
\label{rem:bes}
From Birman-Hilden theory~\cite{JB}, we have the exact sequence
\[1 \to \langle F \rangle \to \SMod_p(S_{\tilde{g}}) \to \LMod_p(S_g) \to 1. \tag{B}\]
Let $G \in \Mod(S_g)$ be of finite order. Then $G \in \LMod_p(S_g)$ if and only if $G$ has a lift $\tilde{G} \in \SMod_p(S_{\tilde{g}})$ of finite order so that the sequence $(B)$ yields a sequence of the form 
$$1 \to \langle F \rangle \to \langle F, \tilde{G} \rangle \to \langle G \rangle \to 1.$$ Thus, $G \in \LMod_p(S_g)$ if and only if for any lift $\tilde{G}$ of $G$, $\langle G \rangle$ lifts under $p$ to a metacyclic group $\langle F, \tilde{G} \rangle$.
\end{rem}

\noindent In the following corollary, we characterize the finite cyclic subgroups in $\Mod(S_{0,3})$ that lift to finite split metacyclic groups under branched covers induced by irreducible cyclic actions.

\begin{cor}
\label{cor:irred_F}
For $g,n \geq 2$, let $p : S_g \to S_{0,3}$ be a cover with deck transformation group $\langle \F \rangle$ with $D_F = (n,0;(c_1,n_1),(c_2,n_2),(c_3,n_3)).$ Then a $G' \in \Mod(S_{0,3})$ of order $m$ has a conjugate $G \in \LMod_p(S_{0,3})$ with a lift $\tilde{G} \in \SMod_p(S_g)$ such that $\langle F, \tilde{G} \rangle \cong \Z_n \rtimes_k \Z_m$ if and only if one of the following conditions hold.
\begin{enumerate}[(a)]
\item $D_{F} = (n,0;(c_1,n_1),(c_2,n),(c_2 k,n))$ for some $k \in \Z_n^{\times}$ such that $k^2 \equiv 1 \pmod{n}$.
\item $D_{F} = (n,0;(c_1,n),(c_1 k,n),(c_1 k^2 ,n))$ for some $k \in \Z_n^{\times}$ such that $k^3 \equiv 1 \pmod{n}$.
\end{enumerate}
\end{cor}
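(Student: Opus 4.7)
The plan is to recast the corollary at the level of the quotient orbifold. Since $F$ is irreducible, $\O_{\langle \F\rangle}$ is a sphere with exactly three cone points, so Birman--Hilden theory (Remark~\ref{rem:bes}) reduces the statement to the classification of finite-order orbifold automorphisms of $S_{0,3}$. Specifically, the desired lift $\tilde{G} \in \SMod_p(S_g)$ of a conjugate $G$ of $G'$, producing $\langle F, \tilde{G}\rangle \cong \Z_n \rtimes_k \Z_m$, exists if and only if one can find a finite-order $\G \in \Homeo^+(S_g)$ of order $m$ with $\G\F\G^{-1} = \F^{k}$ whose induced action $\bar{\G}$ on $\O_{\langle \F\rangle}$ realizes a conjugate of $G'$. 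By Lemma~\ref{lem:ind_aut}, $\bar{\G}$ is an element of $\Aut(\O_{\langle \F\rangle})$ with $|\bar{\G}| = m$ (using Lemma~\ref{lem:ind_aut}(iii) to rule out relations between $\F$ and $\G$); conversely, the construction used in the proof of Theorem~\ref{thm:main} realizes any such $\bar{\G}$ as the induced map of a suitable $\G$.

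Next I would enumerate the possible $\bar{\G}$. Every finite-order orientation-preserving homeomorphism of $S^2$ is conjugate to a rotation, and any nontrivial rotation fixes exactly two points of $S^2$. Consequently the only nontrivial permutations of three cone points realized by such rotations are involutions (fixing one cone point and swapping the other two) and $3$-cycles, forcing $m \in \{2,3\}$. For $m = 2$, after relabeling so that $\bar{\G}$ fixes $[x_1]$ and swaps $[x_2], [x_3]$, the order-preservation clause of Definition~\ref{defn:ind_auto} gives $n_2 = n_3$, and combined with Definition~\ref{defn:data_set}(iii) this forces $n_2 = n_3 = n$; the rotation-number clauses $kc_{x_2} \equiv c_{x_3}$ and $kc_{x_3} \equiv c_{x_2} \pmod{n}$ then yield $c_3 \equiv kc_2 \pmod n$ and $k^2 \equiv 1 \pmod n$, which is condition (a). For $m = 3$, the $3$-cycle forces $n_1 = n_2 = n_3 = n$ together with $c_{i+1} \equiv kc_i \pmod n$ with indices read cyclically, giving $k^3 \equiv 1 \pmod n$ and condition (b).

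For the converse, given $D_F$ in form (a) or (b), the corresponding rotation $\bar{\G} \in \Aut(\O_{\langle \F\rangle})$ exists by direct construction, and I would assemble an explicit split metacyclic data set $\D$ of degree $m \cdot n$ and twist factor $k$ with $\D_1 = D_F$ and $\D_2 = D_G$, where $D_G$ encodes the rotation $\bar{\G}$, in the style of the construction appearing in the proof of the dihedral characterization in Subsection~\ref{subsec:dihed}. Theorem~\ref{thm:main} then realizes the desired $\Z_n \rtimes_k \Z_m$-action in $\Mod(S_g)$, which yields the required lift via the exact sequence of Remark~\ref{rem:bes}. The main obstacle will be the direct verification of conditions (iv) and (v) of Definition~\ref{defn:meta_cyc_dataset} for the constructed $\D$, but the low signature of $\O_{\langle \F\rangle}$ together with the explicit permutation pattern of $\bar{\G}$ keeps the required combinatorics tractable.
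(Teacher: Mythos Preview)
Your proposal is correct and follows essentially the same route as the paper. Both arguments reduce the forward direction to the observation that a nontrivial finite-order rotation of $S^2$ fixes exactly two points, hence the induced automorphism of the three cone points is either a transposition ($m=2$) or a $3$-cycle ($m=3$), with Definition~\ref{defn:ind_auto} and Definition~\ref{defn:data_set}(iii) then pinning down the form of $D_F$; for the converse, both build an explicit split metacyclic data set and invoke Theorem~\ref{thm:main}. The only cosmetic difference is that the paper organizes the forward case analysis by which of the $n_i$ coincide, whereas you organize it by the value of $m$, and the paper writes out the candidate data sets $((2\cdot n,k),0;[(1,2),(0,1),2],[(1,2),(n-c_2,n),2n_1],[(0,1),(c_2,n),n])$ and $((3\cdot n,k),0;[(1,3),(0,1),3],[(2,3),(n-c_1,n),3],[(0,1),(c_1,n),n])$ explicitly rather than leaving them as a plan.
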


\begin{proof}
Suppose that $G' \in \Mod(S_{0,3})$ has a conjugate $G \in \LMod_p(S_{0,3})$ with a lift $\tilde{G} \in \SMod_p(S_g)$ such that $H =\langle F, \tilde{G} \rangle \cong \Z_n \rtimes_k \Z_m$. First, we claim that the $n_i$, for $1 \leq i \leq 3$, are not distinct. Suppose that we assume on the contrary that the $n_i$, for $1 \leq i \leq 3$, are indeed distinct. Since $\G' \in \Aut(\O_{\langle \F \rangle})$ and $|\G'| >1$, it would have to fix all three cone points of $\O_{\langle \F \rangle}$, which contradicts the fact that any nontrivial automorphism of the sphere has exactly two fixed points. Thus, the following two cases arise.
	
	\textit{Case} 1: $n_2 = n_3 = n \neq n_1$. In this case, $\G'$ fixes the cone point, say of order $n_1$, and should permute the remaining 2 cone points of orders $n_2$ and $n_3$. This implies that $D_{F}$ takes the form in condition (a) in our hypothesis (by Definition~\ref{defn:ind_auto}), and hence $H = \langle F, \tilde{G} \rangle \cong \Z_n \rtimes_k \Z_2$. 
	
	\textit{Case} 2: $n_i = n, 1 \leq i \leq 3$. In this case, if $\G'$ permutes all the three cone points cyclically, then $D_F$ takes the form in condition (b) in our hypothesis, and hence $H  \cong \Z_n \rtimes_k \Z_3$.  Alternatively, $\G'$ could also fix a cone point of order $n$ and permute the remaining 2 cone points, in which case, $D_F$ will take the form in condition (a).  
	
	Conversely, if $D_F = (n,0;(c_1,n_1),(c_2,n),(c_2k,n))$ for some $k \in \Z_n^{\times}$ such that $k^2 \equiv 1 \pmod{n}$. Up to conjugacy, let $\G' \in \Aut( \O_{\langle \F \rangle})$ be an involution so that $\G'$ maps the cone point represented by $(c_2,n)$ to the cone point represented by $(c_2 \cdot k,n)$. To prove our assertion, it would suffice to show the existence of an involution $\G \in \Homeo^+(S_g)$ that induces $\G'$. This amounts to showing that there exists a split metacyclic data set $\D$ of degree $2 \cdot n$ with twist factor $k$ encoding the weak conjugacy class $(H,(\G,\F))$ so that $D_G$ has degree $2$. Consider the tuple
	$((2 \cdot n,k),0; [(1,2),(0,1),2],[(1,2),(n-c_2,n),2 n_1],[(0,1),(c_2,n),n])$. By simple computation would reveal that conditions (i) - (iv) of Definition~\ref{defn:meta_cyc_dataset} hold true. Condition (v) is true by taking $v = 1$, $(p_1,p_2,p_3) = (1,0,0)$ and $(q_1,q_2,q_3) = (0,0,w)$ such that $w c_2 \equiv 1 \pmod{n}$, which proves our claim. 
	
	For the case when $D_{F} = (n,0;(c_1,n),(c_1 k ,n),(c_1k^2 ,n))$ for some $k \in \Z_n^{\times}$ such that $k^3 \equiv 1 \pmod{n}$, let $\G' \in \Aut( \O_{\langle \F \rangle})$ be of order $3$ so that for $1 \leq i \leq 2$, $\G'^i$ maps the cone point represented by $(c_1,n)$ to the cone point represented by $(c_1 k^i,n)$. By similar argument as above, we can show that the tuple $((3 \cdot n,k),0; [(1,3),(0,1),3],[(2,3),(n-c_1,n),3],[(0,1),(c_1,n),n])$ forms a split metacyclic data set of degree $3 \cdot n$ with twist factor $k$.
\end{proof}

\begin{example}
For $i=1,2$, consider the branched cover $p: S_3 \to \O_{\langle \F_i \rangle} (\approx S_{0,3})$, where $D_{F_1} = (8,0;(1,4),(1,8),(5,8))$ and $D_{F_2} = (8,0;(3,4),(3,8),(7,8))$. Then (up to conjugacy) the order-$2$ mapping class $G \in \LMod_p(S_{0,3})$ represented by an automorphism ${\G} \in \text{Aut}_5(S_{0,3})$, that permutes two cone points of order 8 and fixes order 4 cone point, lifts to a $\tilde{G} \in \SMod_p(S_3)$ with $D_{\tilde{G}} = (2,1;((1,2),4))$ such that $\langle F_i , \tilde{G} \rangle  \cong \mathbb{Z}_8 \rtimes_5 \mathbb{Z}_2$. Moreover, the weak conjugacy class of $(\langle \F_i , \tilde{\G} \rangle, (\tilde{\G}, \F_i))$, for $i=1,2$, is encoded by 
\begin{gather*} 
((2 \cdot 8,5),0;[(1,2),(0,1),2],[(1,2),(7,8),8], [(0,1),(1,8),8]) \text{ and } \\
((2 \cdot 8,5),0;[(1,2),(0,1),2],[(1,2),(1,8),8], [(0,1),(7,8),8]),
\end{gather*}
respectively. The geometric realization of these actions is illustrated in Figure~\ref{fig:Z8_5_Z2_S3} below, where for each $i$, the action $\F_i$ is realized by the rotation of a polygon of type $\P_{F_i}$ described in Theorem~\ref{res:1}. 
\begin{figure}[H]
	\centering
	\labellist
    \tiny
	\pinlabel $\G$ at 145 270
	\pinlabel $\pi$ at 160 250
	\pinlabel $(1,8)$ at -23 45
	\pinlabel $(1,4)$ at 140 20
	\pinlabel $(1,4)$ at 140 140
	\pinlabel $(5,8)$ at 298 55
	\pinlabel $\G$ at 490 270
	\pinlabel $\pi$ at 504 250
	\pinlabel $(3,8)$ at 323 38
	\pinlabel $(3,4)$ at 485 20
	\pinlabel $(3,4)$ at 485 140
	\pinlabel $(7,8)$ at 645 50
	\endlabellist
	\includegraphics[width=55ex]{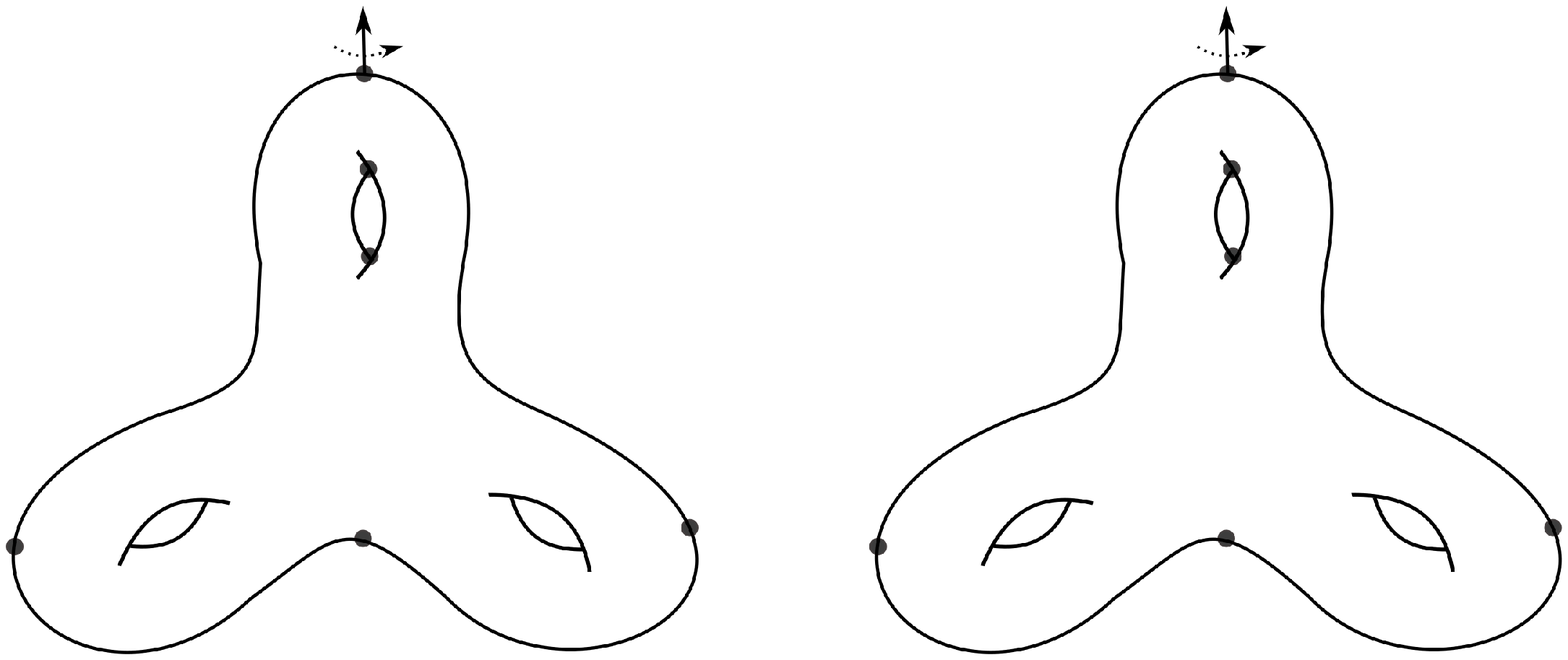}
	\caption{The realizations of two distinct $\mathbb{Z}_8 \rtimes_5 \mathbb{Z}_2$-actions on $S_3$.}
	\label{fig:Z8_5_Z2_S3}
	\end{figure}
\end{example}

\begin{prop}
For $g,n \geq 2$, let $p : S_{n(g-1)+1} \rightarrow S_g$ be a regular cover with deck transformation group $\Z_n \cong \langle \F \rangle$. Then any involution $G' \in \Mod(S_g)$ has a conjugate $G \in \LMod_p(S_g)$ with a lift $\tilde{G} \in \SMod_p(S_{n(g-1)+1})$ such that $\langle F, \tilde{G} \rangle \cong D_{2n}$.
\end{prop}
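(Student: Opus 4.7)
The plan is to apply Theorem~\ref{thm:main} by producing a split metacyclic data set $\D$ of degree $2 \cdot n$, twist factor $-1$, and genus $n(g-1)+1$ that encodes a $D_{2n}$-action $(H, (\tilde{\G}, \F_0))$ on $S_{n(g-1)+1}$ in which $\langle \F_0 \rangle$ acts freely (so that $S_{n(g-1)+1}/\langle \F_0 \rangle \approx S_g$) and in which the induced involution $\bar{\tilde{\G}}$ on $S_g$ has the same data set as $G'$. A subsequent $\Homeo^+$-conjugation would identify $\F_0$ with the given deck transformation $\F$, and Remark~\ref{rem:bes} would then deliver the desired conjugate $G \in \LMod_p(S_g)$ of $G'$ together with a lift $\tilde{G} \in \SMod_p(S_{n(g-1)+1})$ satisfying $\langle F, \tilde{G} \rangle \cong D_{2n}$.

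Writing $D_{G'} = (2, g_0'; ((1,2), b))$ with $b = 2(g - 2g_0' + 1)$, the target orbifold $\O_H$ must coincide with $S_g/\langle \G' \rangle$, having signature $(g_0'; 2^b)$. I would realize the $H$-action by invoking Lemma~\ref{Harvey Condition} and constructing an order-preserving surjection $\phi : \pi_1^{\orb}(\O_H) \to D_{2n}$, sending each elliptic generator $\xi_i$ to an involution $\tilde{\G} \F_0^{b_i}$ lying outside $\langle \F_0 \rangle$ (ensuring that $\phi^{-1}(\langle \F_0 \rangle)$ is torsion-free, so $\langle \F_0 \rangle$ acts freely on $S_{n(g-1)+1}$), and assigning the hyperbolic generators $\alpha_j, \beta_j$ (when $g_0' \geq 1$) so that the long relation closes. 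The construction splits into three cases. When $g_0' = 0$ (forcing $b \geq 6$), set $b_1 = 0$, $b_2 = 1$ and pick the remaining $b_i$'s so that $\sum_{i=1}^b (-1)^i b_i \equiv 0 \pmod n$; surjectivity follows since $\phi(\xi_1 \xi_2) = \F_0$. When $g_0' \geq 1$ and $b \geq 2$, set $\phi(\alpha_1) = \tilde{\G}$, $\phi(\beta_1) = \F_0$, trivialize the remaining hyperbolic generators, and select the $b_i$'s so that $\sum (-1)^i b_i \equiv 2 \pmod n$, absorbing the commutator $[\tilde{\G}, \F_0] = \F_0^{-2}$. When $b = 0$ (forcing $g$ odd and $g_0' \geq 2$), use two commutators $[\tilde{\G}, \F_0] [\tilde{\G}, \F_0^{-1}] = \F_0^{-2}\F_0^{2} = 1$ on $\alpha_1, \beta_1, \alpha_2, \beta_2$, trivializing the rest.

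By Lemma~\ref{prop:ds-action}, the induced involution $\bar{\tilde{\G}}$ is conjugate to $\G'$ since its data set is $D_{G'}$. The \textbf{main obstacle} is matching the constructed free action of $\langle \F_0 \rangle$ with the specific deck transformation $\F$ of $p$. The residual freedom in choosing the $b_i$'s and the images of hyperbolic generators should allow $\D_1$ to agree with $D_F$; since any two free $\Z_n$-actions on $S_{n(g-1)+1}$ with the same data set are $\Homeo^+$-conjugate by Lemma~\ref{prop:ds-action}, transporting $\tilde{\G}$ by such a conjugation identifies the cyclic factor with $\F$. The transporting homeomorphism descends to $S_g$ and preserves conjugacy classes of involutions, so the induced involution remains in the class of $\G'$, yielding the required $G$ and $\tilde{G}$.
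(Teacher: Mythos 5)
Your proposal is correct in substance and reaches the same intermediate goal as the paper -- a $D_{2n}$-action on $S_{n(g-1)+1}$ whose cyclic factor $\langle \F_0\rangle$ acts freely and whose induced involution on the genus-$g$ quotient has data set $D_{G'}$, finished off by Remark~\ref{rem:bes} -- but you implement it by a genuinely different route. The paper stays inside the data-set calculus: it writes down explicit dihedral data sets ($t$ copies of $[(1,2),(0,1),2]$ when $g_0\geq 1$; $t-2$ such copies plus two copies of $[(1,2),(1,n),2]$ when $g_0=0$), checks conditions (i)--(vi) of Definition~\ref{defn:meta_cyc_dataset} with explicit $(p_i),(q_i)$ vectors, and then quotes Theorem~\ref{thm:main}. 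You instead build the order-preserving surface-kernel epimorphism $\phi:\pi_1^{\orb}(\O_H)\to D_{2n}$ directly (each $\xi_i\mapsto \tilde{\G}\F_0^{b_i}$, hyperbolic generators chosen to close the long relation), invoke Lemma~\ref{Harvey Condition}, and get freeness of $\langle\F_0\rangle$ from the fact that all torsion maps to reflections outside the normal cyclic factor -- in effect re-deriving the special case of Proposition~\ref{prop:main} that is needed, with your $b_i$'s playing exactly the role of the paper's $c_{i2}$ entries and your congruences $\sum(-1)^ib_i\equiv 0$ or $\pm 2 \pmod n$ matching conditions (iii)--(iv). What the paper's route buys is that surjectivity and the weak conjugacy bookkeeping come for free from Theorem~\ref{thm:main}; what yours buys is transparency, a uniform treatment of all quotient genera, and an explicit handling of the free-involution case $b=0$, which the paper dismisses as ``easy to see.'' The one soft spot is your final identification of the constructed $\langle\F_0\rangle$ with the deck group of the given cover $p$: asserting that the residual freedom ``should allow'' $\D_1=D_F$ is not an argument, and what is really needed (and suffices, since $\langle F,\tilde G\rangle=\langle F^s,\tilde G\rangle$) is that any two free $\Z_n$-actions on $S_{n(g-1)+1}$ with quotient $S_g$ are conjugate as groups, so that a conjugating homeomorphism transports $\tilde\G$ into the normalizer of the actual deck group and descends to the required conjugate $G$ of $G'$; note, however, that the paper glosses over this identification entirely, so your sketch is at worst at the same level of rigor on this point.
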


\begin{proof}
Let $G' \in \Mod(S_g)$ be an involution. When $\G'$ generates a free action on $S_g$, it is easy to see that $(\langle F, \tilde{G} \rangle, (\tilde{G}, F))$ represents a weak conjugacy class in $\Mod(S_{n(g-1)+1})$ with $\langle F, \tilde{G} \rangle \cong D_{2n}$. Now, we assume that $\G'$ generates a non-free action with $D_{G'}= (2, g_0; ((1,2),t))$. By Theorem~\ref{thm:main} and Remark~\ref{rem:bes}, it suffices to show that there exists a dihedral data set $\D$ of degree $2\cdot n$ and genus $n(g-1)+1$ representing the weak conjugacy class of $(\langle F, \tilde{G} \rangle, (\tilde{G}, F))$. When $g_0 \geq 1$, we take $\D$ to be the tuple $$((2 \cdot n,-1),g_0; \underbrace{[(1,2),(0,1),2], \hdots ,[(1,2),(0,1),2]}_{t ~ \text{times}},$$ and when $g_0 = 0$, $t \geq 4$, and so we take $\D$ to be the tuple 
\begin{gather*}
((2 \cdot n,-1),0; \underbrace{[(1,2),(0,1),2], \hdots, [(1,2),(0,1),2]}_{t-2 ~ \text{times}}, \\ [(1,2),(1,n),2], [(1,2),(1,n),2]).
\end{gather*}
It is an easy computation to check that $\D$ satisfies conditions (i)-(iv) of Definition~\ref{defn:meta_cyc_dataset} in both cases. When $g_0 = 0$, taking $v=1$, $$(p_1, \hdots , p_t) =   (1,0, \hdots , 0), \text{ and } (q_1,\ldots,q_{t}) = (0, \hdots, 0,1,1,0)$$ we obtain condition (v). Moreover, when $g_0 = 1$, we take $v=1$, 
$$(p_1, \hdots , p_t) = (1,0, \hdots , 0), \text{ and } (q_1,\ldots,q_{t}) = (0, \hdots, 0),$$ thereby verifying condition (vi). Thus, we have shown that $\D$ is a dihedral data set as desired. Finally, it follows from Theorem~\ref{thm:main} that $\D$ encodes the weak conjugacy class of $(\langle F, \tilde{G} \rangle, (\tilde{G}, F))$. 
\end{proof}

\noindent Note that the same $\mathbb{Z}_{2}$-action can lift to multiple non-isomorphic groups under a regular cyclic cover. We illustrate this phenomenon in the following example.

\begin{example}
Let $p : S_5 \rightarrow S_2$ be a regular $4$-sheeted cover with deck transformation group $\Z_4= \langle \F \rangle$ as illustrated in Figure~\ref{fig:lift_Z2_S5} below. 
\begin{figure}[H]
	\centering
	\labellist
	\small
	\pinlabel $\F$ at 85 238
	\pinlabel $\tilde{\G_2}$ at 195 108
	\pinlabel $\frac{\pi}{2}$ at 105 225
	\pinlabel $\pi$ at 225 100
	\pinlabel $\tilde{\G_1}$ at 195 65
	\pinlabel $\pi$ at 185 85
	\pinlabel $\F \tilde{\G_1}$ at -8 40
	\pinlabel $\pi$ at -6 60
	\endlabellist
	\includegraphics[width=55ex]{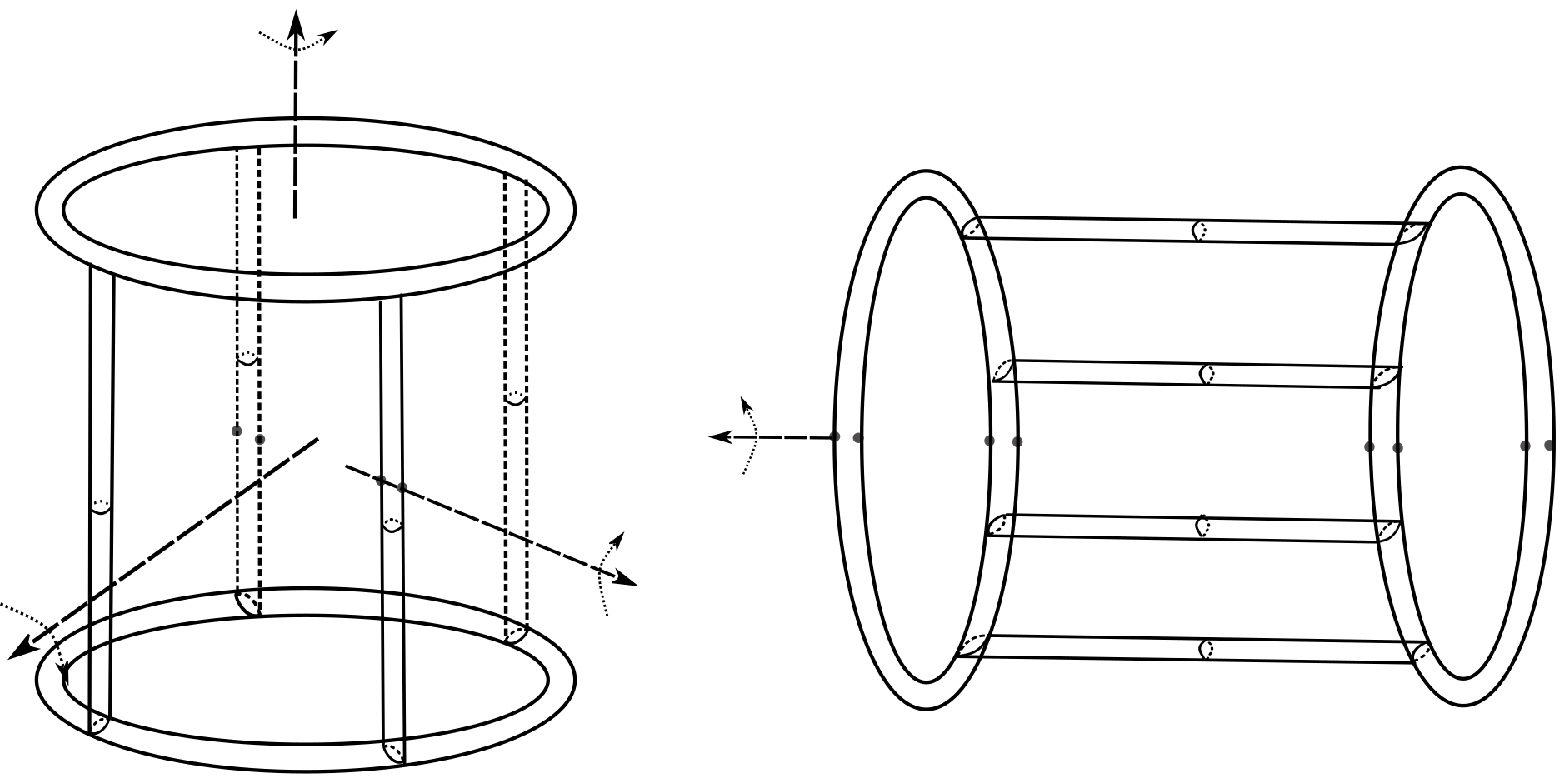}
	\caption{Two distinct lifts $\tilde{G}_1,\tilde{G}_2 \in \SMod(S_5)$ of an involution in $G \in \Mod(S_2)$.}
	\label{fig:lift_Z2_S5}
	\end{figure}
\noindent The involution $G \in \Mod(S_2)$ with $D_G = (2,1;(1,2),(1,2))$ has two distinct lifts $\tilde{G}_1,\tilde{G}_2 \in \SMod_p(S_5)$ (as indicated) such that $\langle F, \tilde{G}_1 \rangle \cong D_{8}$ and $\langle F, \tilde{G}_2 \rangle \cong \mathbb{Z}_2 \times \mathbb{Z}_4$. Note that the weak conjugacy class of $(\langle \F, \tilde{\G}_1 \rangle, (\tilde{\G}_1,\F))$ is represented by $((2 \cdot 4,-1),1;[(1,2),(0,1),2],[(1,2),(0,1),2])$.
\end{example}

\noindent The following proposition provides a sufficient condition for the liftability of $\Z_m$-actions whose corresponding orbifolds are spheres with a cone point of order $m$. 

\begin{prop}
\label{prop:lift_sph_ordern_cpt}
For $g,n \geq 2$, let $p : S_{n(g-1)+1} \rightarrow S_g$ be a regular $n$-sheeted cover with deck transformation group $\Z_n \cong \langle \F \rangle$. Let $G' \in \Mod(S_g)$ of be order $m$ such that $D_{G'} = (m,0;(c_1,m_1), \hdots , (c_{\ell},m_{\ell}))$ with $m_{\ell} = m$ (say).  Then $G'$ has a conjugate $G \in \LMod_{p}(S_g)$ with a lift $\tilde{G} \in \SMod_p(S_{n(g-1)+1})$ such that $\langle F, \tilde{G} \rangle \cong \Z_n \rtimes_k \Z_m$ if the following conditions hold.
\begin{enumerate}[(a)]
\item  There exists $a_1, \hdots , a_{\ell -1} \in \Z$, and $k \in \Z_n^{\times}$, $k^m \equiv 1 \pmod{n}$ such that
$$\displaystyle \sum_{i=1}^{\ell -1} a_i (k^{c_i \frac{m}{m_i}} -1) \prod_{s=i+1}^{\ell -1} k^{c_s \frac{m}{m_s}} \equiv 0 \pmod{n}.$$
\item  Let $d_i \frac{n}{n_i} \equiv a_i (k^{c_i \frac{m}{m_i}} -1) \pmod{n}$, for $1\leq i \leq \ell -1$, where $\gcd(d_i,n_i) = 1$ and $n_i \mid n$. Then we have $$\displaystyle \mathrm{lcm}(n_1, n_2, \hdots , n_{\ell -1} )= n.$$
\end{enumerate}
\end{prop}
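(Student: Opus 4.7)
The plan is to invoke Theorem~\ref{thm:main} together with Remark~\ref{rem:bes}: these reduce the claim to producing a split metacyclic data set $\D$ of degree $m\cdot n$, twist factor $k$, and genus $n(g-1)+1$ whose induced cyclic data sets satisfy $\D_1 = D_F$ and $\D_2 = D_{G'}$. Since $\F$ is the deck group of a regular unbranched cover, $D_F = (n, g, r;)$ records a free rotation, and the orbifold $\O_{\langle \F,\tilde{\G}\rangle}$ coincides with $\O_{\langle \G'\rangle} \approx S_{0,\ell}$ with cone orders $m_1,\ldots,m_\ell$. Consequently, $\D$ must have $g_0 = 0$, exactly $\ell$ branching triples, and the $i$th triple must contribute a cone point of order $m_i$ to the quotient.

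The candidate I will write down is
$$
\D = ((m\cdot n, k), 0; [(c_1,m_1),(d_1,n_1), m_1], \ldots, [(c_{\ell-1},m_{\ell-1}),(d_{\ell-1},n_{\ell-1}), m_{\ell-1}], [(c_\ell, m),(0,1), m]),
$$
where the $d_i \in \Z_{n_i}^{\times}$ with $n_i \mid n$ are the integers furnished by hypothesis (b); the trivial $\F$-component $(0,1)$ in the last triple reflects that the cone point of order $m$ in $\O_{\langle \G'\rangle}$ lifts to a fixed point of $\tilde{\G}$. The verification of conditions (i)--(v) of Definition~\ref{defn:meta_cyc_dataset} then proceeds as follows. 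Condition (i) is a Riemann--Hurwitz check reducing via $\frac{2(n(g-1)+1)-2}{mn} = \frac{2g-2}{m}$ to the Riemann--Hurwitz equation for $D_{G'}$. For (ii)(b), I will show $\beta_i = 1$ throughout: when $i<\ell$, the required congruence follows from the defining relation $d_i\frac{n}{n_i}\equiv a_i(k^{c_im/m_i}-1) \pmod n$ together with the telescoping identity $(k^{c_im/m_i}-1)\sum_{j=0}^{m_i-1}k^{c_imj/m_i}= k^{c_im}-1 \equiv 0 \pmod n$, while for $i=\ell$ it is trivial since $c_{\ell 2}=0$. Condition (iii) is precisely the long relation for $D_{G'}$. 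Condition (iv) follows by factoring $k^{c_\ell}$ out of the $s=\ell$ term in the defining product for $A$ and substituting the formula for $d_i\frac{n}{n_i}$; what remains is $k^{c_\ell}$ times the expression of hypothesis (a), which vanishes modulo $n$.

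The surjectivity condition (v) remains, and here I expect (v)(b) to be the main obstacle. For (v)(a) I will take $v=1$ and $(p_1,\ldots,p_\ell) = (0,\ldots,0,c_\ell^{-1})$ (reducing mod $m$); the first congruence is immediate, and the second sum collapses because the only nonzero $p_{i'}$ occurs at $i'=\ell$ where $c_{\ell 2}=0$. For (v)(b), I plan to leverage the lcm condition of hypothesis (b): since $\mathrm{lcm}(n_1,\ldots,n_{\ell-1}) = n$, the elements $\{d_i n/n_i : 1 \leq i \leq \ell-1\}$ generate $\Z_n$, so some $\Z$-linear combination of them is $\equiv 1 \pmod n$. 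The delicate point is to realize this combination through the telescoping coefficients $\sum_{s=1}^{q_{i'}}k^{c_{i1}m/n_{i1}(q_{i'}-s)}$ demanded in (v)(b) while simultaneously enforcing $\sum_{i'}q_{i'} c_{i1} m/n_{i1}\equiv 0 \pmod m$. The strategy is to choose $v$ sufficiently large so that the $\ell$-periodic nature of the products $\prod_{t'>i'} k^{c_{t1}m/n_{t1}}$ (which cycles back to $1$ after each block of $\ell$ by condition (iii)) allows independent tuning of each $d_i n/n_i$-coefficient, and then to absorb any residual first-sum discrepancy using the slots corresponding to $i\equiv \ell$, where $c_\ell \in \Z_m^{\times}$ permits such an adjustment while $c_{\ell 2}=0$ ensures the second sum is undisturbed. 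Once (v)(b) is verified, $\D$ is a bona fide split metacyclic data set and the conclusion follows from Theorem~\ref{thm:main} and Remark~\ref{rem:bes}.
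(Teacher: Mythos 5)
Your proposal is correct and follows essentially the same route as the paper: the same reduction via Theorem~\ref{thm:main} and Remark~\ref{rem:bes}, the identical candidate data set $\D$ with $[(c_\ell,m),(0,1),m]$ as the last triple, the same choice $v=1$, $(p_1,\ldots,p_\ell)=(0,\ldots,0,c_\ell^{-1})$ for condition (v)(a), and condition (iv) extracted from hypothesis (a) by factoring out $k^{c_\ell}$. Your telescoping verification of (ii)(b) and your sketch for (v)(b) (using that $\mathrm{lcm}(n_1,\ldots,n_{\ell-1})=n$ makes the elements $d_i\,n/n_i$ generate $\Z_n$, with padding by zero exponents over enough cyclic blocks) are in fact more explicit than the paper, which simply asserts that (v)(b) follows from hypothesis (b).
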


\begin{proof}
By Theorem~\ref{thm:main} and Remark~\ref{rem:bes}, it suffices to show that the tuple 
\begin{gather*}\D = ((m \cdot n, k),0; [(c_1,m_1),(d_1,n_1),m_1], \ldots, \\ [(c_{\ell -1},m_{\ell -1}),(d_{\ell -1},n_{\ell -1}),m_{\ell -1}],  [(c_{\ell},m_{\ell}),(0,1),m_{\ell}]) 
\end{gather*}
forms a split metacyclic data set of genus $n(g-1)+1$ that represents the weak conjugacy class of $(\langle F, \tilde{G}\rangle, (\tilde{G},F))$ for some lift $\tilde{G}$ of $G$ under $p$. It can be verified easily that $\D$ satisfies conditions (i)-(iii) of Definition~\ref{defn:meta_cyc_dataset}, and further, condition (iv) follows from condition (a) in our hypothesis. By taking $v=1$, 
$(p_1, \hdots ,p_{\ell}) = (0, \hdots ,0,w)$ such that $w c_{\ell} \equiv 1 \pmod{m}$, we see that condition (v)(a) holds. Finally, condition (v)(b) follows from condition (b) in our hypothesis, and our assertion follows.
\end{proof}

\noindent Using similar arguments, we can show the following. 

\begin{prop}
\label{prop:lift_sph_no_ordern_cpt}
For $g,n \geq 2$, let $p : S_{n(g-1)+1} \rightarrow S_g$ be a regular $n$-sheeted cover with deck transformation group $\Z_n \cong \langle \F \rangle$. Let $G' \in \Mod(S_g)$ be of order $m$ such that $D_{G'} = (m,0;(c_1,m_1), \hdots , (c_{\ell},m_{\ell}))$ with $m_{i} \neq m$, for $1 \leq i \leq \ell$.  Then $G'$ has a conjugate $G \in \LMod_{p}(S_g)$ with a lift  $\tilde{G} \in \SMod_p(S_{n(g-1)+1})$ such that $\langle F, \tilde{G} \rangle \cong \Z_n \rtimes_k \Z_m$ if following conditions hold.
\begin{enumerate}[(i)]
\item  There exists $a_1, \hdots , a_{\ell} \in \Z$, and $k \in \Z_n^{\times}$, $k^m \equiv 1 \pmod{n}$ such that
$$\displaystyle \sum_{i=1}^{\ell} a_i (k^{c_i \frac{m}{m_i}} -1) \prod_{s=i+1}^{\ell} k^{c_s \frac{m}{m_s}} \equiv 0 \pmod{n}.$$
\item There exists $(p_1, \ldots , p_{\ell v}), (q_1, \ldots, q_{\ell v}) \in \Z^{\ell v}$ and $v \in \mathbb{N}$ such that condition (v)(b) of Definition~\ref{defn:meta_cyc_dataset} holds, where for $1 \leq i \leq\ell$, we have
$$c_{i1}\frac{m}{n_{i1}} \equiv c_i\frac{m}{m_i} \pmod{m} \text{ and } c_{i2}\frac{n}{n_{i2}} \equiv a_i(k^{c_i\frac{m}{m_i}}-1) \pmod{n}.$$
\end{enumerate}
\end{prop}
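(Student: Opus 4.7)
The plan is to mirror the proof of Proposition~\ref{prop:lift_sph_ordern_cpt} by explicitly producing a split metacyclic data set $\D$ of degree $m\cdot n$, twist factor $k$, and genus $n(g-1)+1$, and then invoking Theorem~\ref{thm:main} together with Remark~\ref{rem:bes} to obtain the desired lift. Concretely, I would take
\[\D = ((m\cdot n,k),0;[(c_1,m_1),(d_1,n_1),n_1'],\ldots,[(c_\ell,m_\ell),(d_\ell,n_\ell),n_\ell']),\]
where for each $1\le i\le \ell$, the pair $(d_i,n_i)$ is the unique one with $n_i\mid n$ and $d_i\in\Z_{n_i}^{\times}$ satisfying $d_i\,n/n_i\equiv a_i(k^{c_im/m_i}-1)\pmod n$, and $n_i'$ is the integer prescribed by condition (ii)(b) of Definition~\ref{defn:meta_cyc_dataset}. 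The first cyclic factor $\D_1$ then matches $D_{G'}$ by construction, which under Remark~\ref{rem:bes} is precisely what is needed to realize a conjugate $G\in\LMod_p(S_g)$ of $G'$ with a lift $\tilde G\in\SMod_p(S_{n(g-1)+1})$ such that $\langle F,\tilde G\rangle\cong\Z_n\rtimes_k\Z_m$.

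The bulk of the verifications parallel those in the proof of Proposition~\ref{prop:lift_sph_ordern_cpt}. Condition (i) of Definition~\ref{defn:meta_cyc_dataset} is a direct Riemann--Hurwitz computation: since $D_{G'}$ is a data set of degree $m$ and genus $g$, multiplying the associated Euler characteristic identity through by $n$ yields the target genus $n(g-1)+1$. Condition (ii) follows by construction of the triples $[(c_i,m_i),(d_i,n_i),n_i']$. Condition (iii) reduces to $\sum_i c_i\,m/m_i\equiv 0\pmod m$, which is the long relation already built into the data set $D_{G'}$. Condition (iv) is immediately equivalent to hypothesis (i) of the proposition, and condition (v)(b) is exactly hypothesis (ii).

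The only place at which the argument departs from its predecessor is condition (v)(a): the existence of a tuple $(p_1,\ldots,p_{\ell v})\in\Z^{\ell v}$ with $\sum p_{i'} c_i\,m/m_i\equiv 1\pmod m$ together with the accompanying congruence modulo $n$. In Proposition~\ref{prop:lift_sph_ordern_cpt} this was trivial thanks to the cone point of order $m$, and here no such $m_i$ exists; this is expected to be the main obstacle. The key observation is that because $D_{G'}$ is a data set with $g_0=0$, condition (iii) of Definition~\ref{defn:data_set} forces $\lcm(m_1,\ldots,\widehat{m_i},\ldots,m_\ell)=m$ for every $i$, so in particular $\gcd(m/m_1,\ldots,m/m_\ell)=1$. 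Combined with $\gcd(c_i,m_i)=1$, this produces integers $p_1,\ldots,p_\ell$ solving $\sum p_i c_i\,m/m_i\equiv 1\pmod m$. I then extend $(p_1,\ldots,p_\ell)$ to a length-$\ell v$ tuple by iterating the same cyclic pattern that hypothesis (ii) uses to control the $(q_{i'})$-congruences, which is precisely the mechanism that allows one to simultaneously satisfy the companion congruence modulo $n$. Once (v)(a) is in hand, $\D$ is a bona fide split metacyclic data set, Theorem~\ref{thm:main} supplies a weak conjugacy class $(\langle F,\tilde G\rangle,(\tilde G,F))$ encoded by $\D$, and Remark~\ref{rem:bes} packages the cyclic-factor data into the desired lift, completing the proof.
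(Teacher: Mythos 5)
Your proposal is correct and takes essentially the same route the paper intends: the paper disposes of this proposition with ``similar arguments'' to Proposition~\ref{prop:lift_sph_ordern_cpt}, i.e.\ exactly your plan of forming the tuple $((m\cdot n,k),0;[(c_1,m_1),(d_1,n_1),m_1],\ldots)$ with $d_i\frac{n}{n_i}\equiv a_i(k^{c_i m/m_i}-1)\pmod n$, checking Definition~\ref{defn:meta_cyc_dataset} from hypotheses (i)--(ii) (your gcd argument from $\lcm(m_1,\ldots,\widehat{m_i},\ldots,m_\ell)=m$ plus appending powers of the $(q_{i'})$-word to kill the residual $\F$-part does legitimately yield (v)(a), and each $\beta_i=1$ since $k^m\equiv1\pmod n$), and then invoking Theorem~\ref{thm:main} with Remark~\ref{rem:bes}. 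One small slip: in the paper's notation $\D_1=D_F$ is the data of the order-$n$ deck generator (which here acts freely because $n_i=n_{i1}=m_i$); what must coincide with $D_{G'}$ is the induced quotient action $\bar{G}$ on $\O_{\langle \F\rangle}\approx S_g$, namely $(m,0;(c_1,m_1),\ldots,(c_\ell,m_\ell))$, which your construction does ensure, so the argument stands.
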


\noindent A consequence of Propositions~\ref{prop:lift_sph_ordern_cpt}-\ref{prop:lift_sph_no_ordern_cpt} is the following. 

\begin{cor}
For $g\geq 2$ and prime $n$, let $p : S_{n(g-1)+1} \rightarrow S_g$ be a regular $n$-sheeted cover with deck transformation group $\Z_n \cong \langle \F \rangle$. Let $G' \in \Mod(S_g)$ be of order $m$ such that the genus of $\O_{\langle \G' \rangle}$ is zero. Then $G'$ has a conjugate $G \in \LMod_{p}(S_g)$ with a lift  $\tilde{G} \in \SMod_p(S_{n(g-1)+1})$ such that $\langle F, \tilde{G} \rangle \cong \Z_n \rtimes_k \Z_m$ if there exists $k \in \Z_n^{\times}$ such that $|k| =m$.
\end{cor}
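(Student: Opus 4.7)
The plan is to reduce, based on whether some cone-point order $m_i$ of $\O_{\langle \G'\rangle}$ equals $m$, to either Proposition~\ref{prop:lift_sph_ordern_cpt} or Proposition~\ref{prop:lift_sph_no_ordern_cpt}, and verify its hypotheses. Writing $D_{G'}=(m,0;(c_1,m_1),\dots,(c_\ell,m_\ell))$, two consequences of the hypotheses drive the argument. First, since $n$ is prime, each candidate $n_i$ in a lift data set lies in $\{1,n\}$ and $\Z_n$ is a field, so the $\lcm$-condition on lift data sets amounts to the requirement that at least one $n_i=n$. Second, since $|k|=m$, the element $K_i:=k^{c_im/m_i}$ has multiplicative order $m_i$ in $\Z_n^{\times}$, and hence $K_i-1$ is a unit whenever $m_i\geq 2$; combining this with the long relation $\sum c_im/m_i\equiv 0 \pmod m$ for $G'$ gives the product identity $\prod_{i=1}^{\ell}K_i=1$.

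In Case~1 (some $m_i=m$, WLOG $m_\ell=m$), condition~(iii) of Definition~\ref{defn:data_set} together with $g\geq 2$ forces $\ell\geq 3$. The single linear equation of Proposition~\ref{prop:lift_sph_ordern_cpt}(a) in the $\ell-1 \geq 2$ variables $a_1,\dots,a_{\ell-1}$ over the field $\Z_n$ has a solution space of dimension at least $\ell-2 \geq 1$; I would pick any solution with some $a_i\neq 0$, forcing the corresponding $n_i=n$ and verifying condition~(b). Surjectivity of the resulting $\phi_H$ onto $H=\Z_n\rtimes_k\Z_m$ is then immediate: $\phi_H(\xi_\ell)=\G^{c_\ell}$ with $\gcd(c_\ell,m)=1$ places $\G$ in the image, after which $\G^{-c_im/m_i}\phi_H(\xi_i)$ produces a nontrivial, hence unit, power of $\F$ in the image, so that $\F\in\mathrm{im}(\phi_H)$ and $\mathrm{im}(\phi_H)=H$.

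In Case~2 (all $m_i<m$), condition~(iii) again forces $\ell\geq 3$, and I would verify Proposition~\ref{prop:lift_sph_no_ordern_cpt}. Condition~(i) is satisfied by the constant choice $a_i=1$ via a telescoping identity using $\prod_iK_i=1$. For condition~(ii), which amounts to $\phi_H$ being surjective onto $H$, I would recast the assignment $\{a_i\}$ as a $1$-cocycle $\psi: \pi_1^{\orb}(\O_{\langle \G'\rangle})\to\Z_n$ where $\Z_n$ is a $\Z_m$-module via multiplication by $k$; a constant choice $a_i=a$ then corresponds to a coboundary, giving an image equal to a complement of $\Z_n$ in $H$ rather than $H$ itself. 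I would then perturb within the $(\ell-1)$-dimensional solution space of~(i) to a non-constant solution, which exists because $\ell\geq 3$ makes this space strictly larger than the $1$-dimensional subspace of constants. Such a non-coboundary $\psi$ forces $\Z_n\subseteq\mathrm{im}(\phi_H)$, which combined with the surjectivity of $\phi_H$ onto $H/\Z_n\cong\Z_m$ (inherited from the surjectivity of $\phi_{\langle \G'\rangle}$), yields $\mathrm{im}(\phi_H)=H$.

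The main obstacle is the surjectivity condition in Case~2: the natural symmetric choice $a_i=$ constant satisfies the long relation but produces only a complement of $\Z_n$ rather than the full metacyclic group. The cocycle-perturbation argument above circumvents this, and relies crucially on both primality of $n$ (so that $\Z_n$ is a field admitting a clean dimension count for the perturbation) and $|k|=m$ (so that $K_i\neq 1$, making constant cocycles genuinely nonzero coboundaries that distinguish complements from the full $H$).
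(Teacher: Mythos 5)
Your argument is correct and follows the same overall route as the paper: split into the cases where some $m_i=m$ and where all $m_i<m$, reduce to Propositions~\ref{prop:lift_sph_ordern_cpt} and~\ref{prop:lift_sph_no_ordern_cpt} respectively, and exploit the primality of $n$ together with $|k|=m$ to make each $k^{c_i m/m_i}-1$ a unit modulo $n$. The paper differs only in how the tuple $(a_i)$ is produced: it writes down an explicit solution supported on two consecutive indices (one entry equal to $1$, the next solving the linear relation using the inverse of the corresponding unit), which in Case 1 immediately yields condition (b) of Proposition~\ref{prop:lift_sph_ordern_cpt}; your dimension count over the field $\Z_n$ achieves the same thing, and your additional surjectivity check in Case 1 is not needed since that proposition only asks for its conditions (a) and (b). The genuine added value of your write-up is in Case 2: the paper exhibits a solution of condition (i) of Proposition~\ref{prop:lift_sph_no_ordern_cpt} and simply asserts that the proposition applies, leaving condition (ii) (the condition (v)(b)-type requirement of Definition~\ref{defn:meta_cyc_dataset}, i.e.\ that a generator of $\Z_n$ lies in the image) unverified. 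You isolate exactly why this is the delicate point: constant tuples $(a_i)$, although they solve (i) by the telescoping identity and $\prod_i k^{c_i m/m_i}=1$, correspond to coboundaries and produce only a conjugate of the complement $\langle \G\rangle$ rather than all of $\Z_n\rtimes_k\Z_m$; perturbing to a non-constant solution (possible since $\ell\geq 3$) forces the image, which already surjects onto $\Z_m$, to meet $\Z_n$ nontrivially and hence (by primality) to contain it. The paper's explicit vectors happen to be non-constant, so its choices do satisfy (ii) by the same reasoning, but that verification is left implicit there; your cocycle/complement observation supplies it cleanly.
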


\begin{proof}
Let $D_{G'} = (m,0;(c_1,m_1), \hdots , (c_{\ell},m_{\ell}))$. First, let us assume (without loss of generality) that $m_{\ell} = m$. By choosing
$$(a_1,\ldots,a_{\ell-1}) = (0,\ldots,0,1,-(k^{c_{\ell-2}\frac{m}{m_{\ell-2}}}-1)\cdot k^{c_{\ell-1}\frac{m}{m_{\ell-1}}}\cdot(k^{c_{\ell-1}\frac{m}{m_{\ell-1}}}-1)^{-1}),$$ we see that condition (i) of Proposition~\ref{prop:lift_sph_ordern_cpt} holds true. Moreover, since $|k|=m$, we have $\gcd((k^{c_{\ell-2}\frac{m}{m_{\ell-2}}}-1),n)=1$, and so condition (ii) also holds, and our assertion follows.

Similarly, for the case when each $m_{i} <m$ for $1 \leq i \leq \ell$, the result follows by taking $$(a_1,\ldots,a_{\ell}) = (0,\ldots,0,1,-(k^{c_{\ell-1}\frac{m}{m_{\ell-1}}}-1)\cdot k^{c_{\ell}\frac{m}{m_{\ell}}}\cdot(k^{c_{\ell}\frac{m}{m_{\ell}}}-1)^{-1}),$$ and applying Proposition~\ref{prop:lift_sph_no_ordern_cpt}.

\end{proof}

 \subsection{Infinite split metacyclic subgroups of $\Mod(S_g)$} 
\label{subsec:inf_meta}
An \textit{infinite split metacyclic group} that is isomorphic to $\Z \rtimes_{-1}  \Z_{2m}$ admits a presentation of the form
 \begin{equation}
 \label{eq:inf_split}
 \langle x,y \, |\, y^{2m}=1, y^{-1}xy = x^{-1} \rangle.
 \end{equation}
\noindent In this subsection, we give an explicit construction of an infinite metacyclic subgroup isomorphic to $\Z \rtimes_{-1}  \Z_{2m}$ of $\Mod(S_g)$. Let $T_c \in \Mod(S_g)$ denote the left-handed Dehn twist about a simple closed curve $c$ in $S_g$. A \textit{root of $T_c$ of degree $s$} is an $F \in \Mod(S_g)$ such that $F^s = T_c$. In the following lemma, by using some basic properties of Dehn twists~\cite[Chapter 3]{FM}, we show that a root of Dehn twist cannot generate an infinite split metacyclic group that admits a presentation as in~(\ref{eq:inf_split}).
\begin{lemma}
	\label{lem:no_root_inf_split}
	For $g \geq 2$, no root of $T_c$ is a generator of any infinite split metacyclic subgroup of $\Mod(S_g)$ of type in Equation~\ref{eq:inf_split}.
\end{lemma}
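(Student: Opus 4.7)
The plan is to argue by contradiction, exploiting the well-known fact that in $\Mod(S_g)$ with $g \geq 2$, a left-handed Dehn twist is never equal to (in fact never conjugate to) a right-handed one. Everything else is a short computation inside the given presentation.

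Suppose, for contradiction, that some $F \in \Mod(S_g)$ satisfying $F^s = T_c$ for some $s \geq 1$ occurs as a generator of an infinite split metacyclic subgroup $H = \langle F, G \rangle$ of $\Mod(S_g)$ with presentation
\[
\langle x, y \,|\, y^{2m} = 1,\, y^{-1} x y = x^{-1} \rangle.
\]
Because $T_c$ has infinite order in $\Mod(S_g)$ for $g \geq 2$, so does $F$. Hence $F$ cannot play the role of the torsion generator $y$, and must therefore correspond to $x$; consequently $G$ corresponds to $y$, satisfies $G^{2m} = 1$, and $G^{-1} F G = F^{-1}$.

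Raising the conjugation relation to the $s$-th power gives
\[
G^{-1} T_c G \;=\; G^{-1} F^{s} G \;=\; (G^{-1} F G)^{s} \;=\; F^{-s} \;=\; T_c^{-1}.
\]
On the other hand, the standard conjugation formula $\phi T_c \phi^{-1} = T_{\phi(c)}$ (see~\cite[Chapter 3]{FM}), valid for any orientation-preserving mapping class, identifies the left-hand side as the left-handed Dehn twist $T_{G^{-1}(c)}$. Combining these, one would have $T_{G^{-1}(c)} = T_c^{-1}$, contradicting the fact that a left-handed Dehn twist about a simple closed curve in $S_g$ is never equal to a right-handed Dehn twist about any simple closed curve (equivalently, the handedness of a Dehn twist is preserved under conjugation by elements of $\Mod(S_g)$).

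The only substantive ingredient is this last contradiction, drawn from standard Dehn twist theory, so I do not anticipate any genuine obstacle. The only minor point worth isolating is the observation that the root $F$ is forced to play the role of the infinite-order generator $x$ (rather than the torsion generator $y$); this is immediate from $T_c$ being of infinite order in $\Mod(S_g)$.
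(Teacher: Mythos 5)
Your proof is correct and takes essentially the same approach as the paper: both derive $T_{G^{-1}(c)} = T_c^{-1}$ from the relation $G^{-1}FG = F^{-1}$ and conclude via the impossibility of a left-handed Dehn twist equaling a right-handed one. The only (harmless) difference is that you raise the relation to the $s$-th power directly, whereas the paper first treats the case $F = T_c$ and then reduces $s > 1$ to it by passing to the subgroup $\langle F^s, G \rangle$.
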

\begin{proof}
	Let $F$ be a root of $T_c$ of degree $s$. Suppose we assume on the contrary that for some $g \geq 2$, there exists an infinite split metacyclic subgroup $H \cong \Z \rtimes_{-1} \Z_{2m}$ of $\Mod(S_g)$ that admits the presentation
	$$H = \langle F,G \, |\, G^{2m}=1, G^{-1}FG = F^{-1} \rangle.$$
	
	First, we consider the case when $s=1$, that is, $F=T_c$. Then we have that 
	$$G^{-1}T_cG= T_c^{-1} \implies T_{G^{-1}(c)}= T^{-1}_c,$$ which is impossible. Thus, we have that $H \neq \langle G, T_c \rangle$, which contradicts our assumption.
	
	For $s >1$, suppose that $H = \langle F,G \rangle$. Then the subgroup $\langle F^s, G \rangle$ of $H$ would also be a split metacyclic group. Since $F^s = T_c$, this would contradict our conclusion in the previous case, and so our assertion follows.
\end{proof}

\noindent By a \textit{multitwist} in $\Mod(S_g)$, we mean a finite product of powers of commuting Dehn twists. In view of Lemma~\ref{lem:no_root_inf_split}, a natural question that arises is whether a multitwist in $\Mod(S_g)$ can generate an infinite split metacyclic group. In the following examples, we answer this question in the affirmative.
\begin{example}
	\label{eg:inf_dihed}
	Let $F' \in \Mod(S_2)$ be of order $3$ with $$D_{F'} = (3,0;((1,3),2),((2,3),2)).$$ First, we note that $\F'$ has four fixed points on $S_2$. Further, it induces a local rotation angle of $2\pi/3$ around two of these points (corresponding to the two $(1,3)$ pairs in $D_{F'}$) and rotation angle of $4\pi/3$ around the remaining two points (corresponding to the two $(2,3)$ pairs in $D_{F'}$), as indicated in Figure~\ref{fig:bound_pair}. Considering this action on two distinct copies of $S_2$, we remove invariant disks around a distinguished $(1,3)$-type fixed point and a distinguished $(2,3)$-type fixed point in each of the two copies. We now attach two annuli connecting the resulting boundary components across the two surfaces so that: 
	\begin{enumerate}[(a)]
		\item each annulus connects a pair of boundary components where the induced rotation angle is the same, as shown in Figure~\ref{fig:bound_pair} below, and further,
		\item the annulus connecting the boundary components with rotation $4\pi/3$ (with the nonseparating curve $c$) has a $1/3^{rd}$ twist, while the other (with the nonseparating curve $d$) has a $-1/3^{rd}$ twist.
	\end{enumerate}
	\begin{figure}[H]
		\centering
		\small
		\labellist
		\pinlabel $(1,3)$ at 75 -1
		\pinlabel $c$ at 200 8
		\pinlabel $d$ at 200 120
		\pinlabel $(2,3)$ at 75 130
		\pinlabel $(1,3)$ at 325 -1
		\pinlabel $(2,3)$ at 325 130
		\pinlabel $(1,3)$ at 145 100
		\pinlabel $(1,3)$ at 259 100
		\pinlabel $(2,3)$ at 145 28
		\pinlabel $(2,3)$ at 259 28
		\pinlabel $\pi$ at 420 77
		\pinlabel $\G$ at 442 79
		\endlabellist
		\includegraphics[width=70ex]{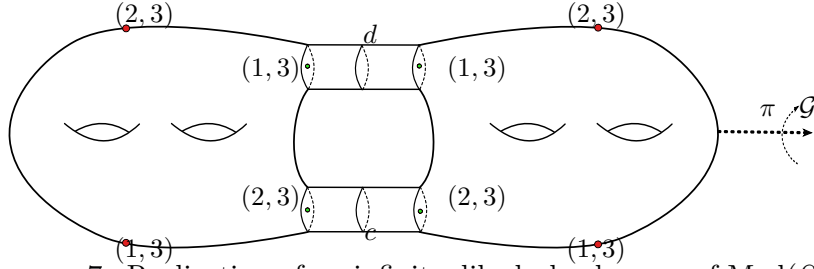}
		\caption{Realization of an infinite dihedral subgroup of $\Mod(S_5)$.}
		\label{fig:bound_pair}
	\end{figure}
	\noindent Thus, by applying the theory developed in~\cite{KP}, we obtain an $F \in \Mod(S_5)$, which is a root of the bounding pair map $T_cT_d^{-1}$ of degree $3$. Now, we consider the hyperelliptic involution $G \in \Mod(S_5)$ with $D_G = (2,0;((1,2),12))$ (also indicated in Figure~\ref{fig:bound_pair}). By our construction, it follows that $GFG^{-1} = F^{-1}$, and so we  have $\langle F, G \rangle \cong \Z \rtimes_{-1} \Z_2$.
\end{example}

\begin{example}
	\label{eg:inf_metafree}
	Let $F' \in \Mod(S_5)$ be of order $3$ with $$D_{F'} = (3,1;((1,3),2),((2,3),2)).$$ First, we note that $\F'$ has four fixed points on $S_5$. Furthermore, it induces a local rotation angle of $2\pi/3$ around two of these points (corresponding to the two $(1,3)$ pairs in $D_{F'}$) and rotation angle of $4\pi/3$ around the remaining two points (corresponding to the two $(2,3)$ pairs in $D_{F'}$), as indicated in Figure~\ref{fig:inf_metafree}. Considering this action on two distinct copies of $S_5$, we remove invariant disks around all fixed point in each of the two copies. We now attach four annuli connecting the resulting boundary components across the two surfaces so that: 
	\begin{enumerate}[(a)]
		\item each annulus connects a pair of boundary components where the induced rotation angle is the same, as shown in Figure~\ref{fig:inf_metafree} below, and further,
		\item the annulus connecting the boundary components with rotation $4\pi/3$ (with the nonseparating curve $c_1$ and $c_3$) has a $1/3^{rd}$ twist, while the other (with the nonseparating curve $c_2$ and $c_4$) has a $-1/3^{rd}$ twist.
	\end{enumerate}
	\begin{figure}[H]
		
		\centering
		\labellist
		\tiny
		\pinlabel $\G$ at 275 595
		\pinlabel $\frac{\pi}{2}$ at 300 555
		\pinlabel $(1,3)$ at 295 485
		\pinlabel $(1,3)$ at 215 460
		\pinlabel $(1,3)$ at 160 465
		\pinlabel $(2,3)$ at 350 485
		\pinlabel $(2,3)$ at 410 455
		\pinlabel $(2,3)$ at 470 467
		\pinlabel $(1,3)$ at 270 315
		\pinlabel $(1,3)$ at 330 340
		\pinlabel $(1,3)$ at 310 300
		\pinlabel $(2,3)$ at 70 300
		\pinlabel $(2,3)$ at 80 350
		\pinlabel $(2,3)$ at 115 315
		\pinlabel $c_1$ at 500 350
		\pinlabel $c_2$ at 185 340
		\pinlabel $c_3$ at 100 185
		\pinlabel $c_4$ at 345 185
		\pinlabel $(1,3)$ at 260 208
		\pinlabel $(1,3)$ at 215 225
		\pinlabel $(1,3)$ at 190 180
		\pinlabel $(2,3)$ at 420 205
		\pinlabel $(2,3)$ at 455 180
		\pinlabel $(2,3)$ at 470 233
		\pinlabel $(1,3)$ at 290 45
		\pinlabel $(1,3)$ at 300 67
		\pinlabel $(1,3)$ at 365 80
		\pinlabel $(2,3)$ at 68 65
		\pinlabel $(2,3)$ at 120 80
		\pinlabel $(2,3)$ at 135 45
		\endlabellist
		\includegraphics[width=50ex]{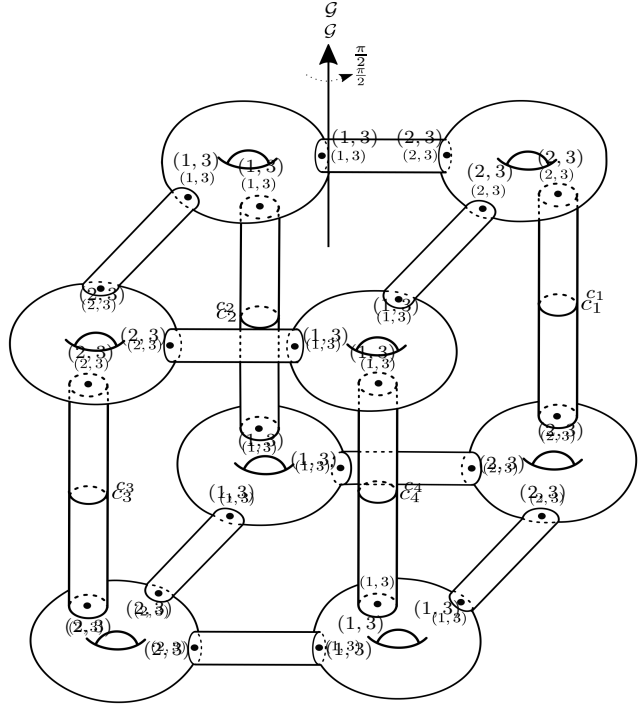}
		\caption{Realization of an infinite metacyclic subgroup of $\Mod(S_{13})$.}
		\label{fig:inf_metafree}
		
	\end{figure}
	\noindent Thus, by applying the theory developed in~\cite{KP}, we obtain an $F \in \Mod(S_{13})$, which is a root of the multitwist $T_{c_1}T_{c_2}^{-1}T_{c_3}T_{c_4}^{-1}$ of degree $3$. Now, we consider a $G \in \Mod(S_{13})$ with $D_G = (4,4,1;)$ (also indicated in Figure~\ref{fig:inf_metafree}). By our construction, as $\Z_3 \rtimes_{-1} \Z_4 \cong\langle F',G' \rangle \leq \Mod(S_5)$, where $D_{G'} = (4,2,1;)$, it follows that $GFG^{-1} = F^{-1}$, and so we  have $\langle F, G \rangle \cong \Z \rtimes_{-1} \Z_4$.
\end{example}

\noindent  Generalizing the above all constructions in Example~\ref{eg:inf_dihed} and Example~\ref{eg:inf_metafree}, we have the following.
\begin{prop}
	\label{prop:inf_split_meta}
	For $i = 1,2$, let $H_i = \langle F_i , G_i \rangle \leq \Mod(S_{g_i})$ with $H_i  \cong \Z_n \rtimes_{-1} \Z_{2m}$, such that the weak conjugacy class $(H_i,(G_i,F_i))$ is represented by a split metacyclic data set $\mathcal{D}_{H_i}$ containing a tuple $[(0,1),(a_i,n),n]$. Then there exists an infinite metacyclic subgroup $ \Mod(S_{g_1+g_2+2m-1})$ that is isomorphic to $\Z \rtimes_{-1} \Z_{2m}$ and generated by a periodic mapping class of order $2m$ and a root of a multitwist of degree $n$.
\end{prop}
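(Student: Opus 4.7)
The plan is to generalize the explicit constructions of Examples~\ref{eg:inf_dihed} and~\ref{eg:inf_metafree}. The hypothesized tuple $[(0,1),(a_i,n),n]$ in $\mathcal{D}_{H_i}$, having $c_{i1}=0$ and $n_{i1}=1$, corresponds via $\phi_{H_i}$ to the element $\F_i^{a_i}$, which generates $\langle \F_i\rangle$ since $a_i \in \Z_n^{\times}$. Consequently, the associated cone point of order $n$ in $\O_{H_i}$ has as its preimage in $S_{g_i}$ a single $H_i$-orbit $\{x_i^{(0)},\ldots,x_i^{(2m-1)}\}$ of $2m$ points, each fixed by $\F_i$, on which $\G_i$ acts as a $2m$-cycle; after relabelling I may assume $\G_i(x_i^{(j)}) = x_i^{(j+1)}$ (indices mod $2m$). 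The relation $\G_i\F_i\G_i^{-1}=\F_i^{-1}$ moreover forces the local $\F_i$-rotation type at $x_i^{(j)}$ to alternate between a fixed pair $(c,n)$ and $(n-c,n)$ along this $\G_i$-orbit.

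Next, I would excise small $\langle \F_i\rangle$-invariant closed disks $D_i^{(j)}$ around each $x_i^{(j)}$, chosen so that $\G_i(D_i^{(j)}) = D_i^{(j+1)}$, and then attach $2m$ annuli $A^{(j)}$ joining $\partial D_1^{(j)}$ to $\partial D_2^{(j)}$, each carrying a fractional twist of magnitude $\pm 1/n$. The sign of the twist on $A^{(j)}$ would be dictated by the local $\F_i$-rotation type at $x_i^{(j)}$, producing a pattern that is $\langle \G_i\rangle$-equivariant, exactly as in Examples~\ref{eg:inf_dihed} and~\ref{eg:inf_metafree}. A standard Euler-characteristic count shows that the resulting closed surface has genus $g_1+g_2+2m-1$. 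Invoking the framework of~\cite{KR2,KP}, the $\F_i$-actions on the bounded surfaces $S_{g_i}\setminus\bigsqcup_j D_i^{(j)}$ will then patch together across these twisted cylinders to produce a mapping class $F\in \Mod(S_{g_1+g_2+2m-1})$ whose $n$-th power equals the nontrivial multitwist $\prod_j T_{c_j}^{e_j}$, where $c_j$ is the core curve of $A^{(j)}$ and $e_j\in\{\pm 1\}$; in particular $F$ will have infinite order. Simultaneously, the equivariance of the twist pattern will ensure that $\G_1$ and $\G_2$ descend to a periodic mapping class $G\in \Mod(S_{g_1+g_2+2m-1})$ of order $2m$.

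To conclude, I would verify that the relation $\G_i\F_i\G_i^{-1}=\F_i^{-1}$ inside each $H_i$, when restricted to the disk complements and combined with the equivariance of the twist pattern, promotes to $GFG^{-1}=F^{-1}$ in $\Mod(S_{g_1+g_2+2m-1})$. Together with the infinite order of $F$ (which immediately forces $\langle F\rangle\cap\langle G\rangle = 1$), this yields $\langle F,G\rangle \cong \Z\rtimes_{-1}\Z_{2m}$, generated by a periodic mapping class of order $2m$ and a degree-$n$ root of a nontrivial multitwist, as required.

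The main obstacle will be prescribing the sign pattern $(e_j)$ for the fractional twists so that it is simultaneously $\langle \G_i\rangle$-equivariant and rotationally compatible at both ends of every annulus, in order that the gluings assemble into bona fide mapping classes rather than only partial ones. This reduces to a careful bookkeeping of $\F_i$-rotation angles along each $\langle \G_i\rangle$-orbit of disks, exploiting the alternation of rotation types established above: conjugation by $\G_i$ inverts $\F_i$ and hence reverses the orientation of local rotation, which matches precisely the required reversal of twist sign. Once this equivariant pattern is pinned down, the remaining assertions follow from the general theory of roots of multitwists developed in~\cite{KP}.
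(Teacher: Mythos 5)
Your overall route is the same as the paper's: generalize Examples~\ref{eg:inf_dihed} and~\ref{eg:inf_metafree} by equivariantly excising disks around the $2m$-point orbit determined by the tuple $[(0,1),(a_i,n),n]$, inserting $2m$ twisted annuli, and assembling $G$ by a $2m$-compatibility of $\G_1$ and $\G_2$; your identification of that orbit, the alternation of the local rotation types along it, and the genus count are all correct. However, there is a genuine gap at exactly the point you set aside as ``bookkeeping'': nothing in your argument guarantees that the assembled map $F$ is a root of a \emph{nontrivial} multitwist. The twisting forced on an annulus is governed by the angle deficit at its two ends: if a boundary component of rotation type $(a_1,n)$ is matched with one of type $(a_2,n)$, the exponent of the Dehn twist about the core curve in $F^n$ is $\pm(a_1^{-1}+a_2^{-1}) \pmod{n}$ (so your claims that the fractional twists have magnitude $1/n$ and that $e_j \in \{\pm 1\}$ are already inaccurate in general). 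In particular, when $a_2 \equiv -a_1 \pmod{n}$ these angles cancel, the gluing is an honest $2m$-compatibility in the sense of Theorem~\ref{res:2}, and the resulting $F$ is \emph{periodic}; then $\langle F,G\rangle$ is finite and the conclusion fails. Since your relabelling of the two orbits leaves the relative alignment of rotation types unspecified, your construction can land precisely in this degenerate situation, and the asserted ``nontriviality'' of the multitwist is never verified.

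The paper's proof handles this by an explicit case distinction: when $a_2 \neq n-a_1$ it glues the $(a_1,n)$-points to the $(a_2,n)$-points, producing multitwist exponents $\pm(a_1^{-1}+a_2^{-1}) \not\equiv 0 \pmod{n}$, whereas when $a_2 = n-a_1$ it instead glues the $(a_1,n)$-points to the $(n-a_2,n)$-points, producing exponents $\pm\bigl(a_1^{-1}+(n-a_2)^{-1}\bigr) \equiv \pm 2a_1^{-1}$, which is nonzero modulo $n$ because the actions are non-abelian (so $n>2$). To repair your argument you must fix the alignment of the two orbits explicitly in this way and check that the resulting twist exponent does not vanish; once that is done, the remaining steps you outline (equivariance of the sign pattern, $GFG^{-1}=F^{-1}$, $|G|=2m$, and the group-theoretic conclusion that $\langle F,G\rangle \cong \Z \rtimes_{-1} \Z_{2m}$) agree with the paper.
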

\begin{proof}
	As $\mathcal{D}_{H_i}$ contains a tuple $[(0,1),(a_i,n),n]$, by Proposition~\ref{prop:main}, we have 
	\begin{gather*} D_{F_1}= (n,g_0;(c_1,n_1), \hdots , (c_s,n_s), \underbrace{(a_1,n),(n-a_1,n) , \hdots ,(a_1,n),(n-a_1,n)}_{m \text{ times }}) \\ \text{ and} \\ D_{F_2}= (n,g_0';(c_1',n_1'), \hdots , (c_t',n_t'), \underbrace{(a_2,n),(n-a_2,n) , \hdots ,(a_2,n),(n-a_2,n)}_{m \text{ times }}).
	\end{gather*} Taking inspiration from the theory developed in~\cite{KR2,KP} and Examples~\ref{eg:inf_dihed}-\ref{eg:inf_metafree}, we glue $2m$ annuli connecting the boundary components resulting from removing invariant disks around the orbit points corresponding to the pairs $(a_1,n)$ and 
	$$\begin{cases}
	(a_2,n), & \text{ if } a_2 \neq n - a_1, \text{ or}\\	
	(n-a_2,n), & \text{ if } a_2 = n - a_1.
	\end{cases}$$
This yields a degree-$n$ root $F$ of a multitwist of the form 
$$\begin{cases}
	\prod_{i=1}^{2m} T_{c_i}^{(-1)^{i+1}(a_1^{-1}+a_2^{-1})}, & \text{ if } a_2 \neq n - a_1, \text{ or}\\	
	\prod_{i=1}^{2m} T_{c_i}^{(-1)^{i+1}(a_1^{-1}+(n-a_2)^{-1})}, & \text{ if } a_2 = n - a_1,
	\end{cases}$$
where $a_ia_i^{-1} \equiv 1 \pmod{n}$ and $a_1^{-1} + a_2^{-1} \in \mathbb{Z}_n$. By considering the action $G$ obtained by performing a $2m$-compatibility on $\G_1$ and $\G_2$ (see Section~\ref{sec:hyp_str}), we see that $\langle F,G\rangle \cong \Z \rtimes_{-1} \Z_{2m}$, as desired.
\end{proof}
The group for $m = 1$ in the presentation of the infinite split metacyclic group of the type in the Equation~\ref{eq:inf_split} is known as infinite dihedral group. Here is the corollary which directly follows from Proposition~\ref{prop:inf_split_meta}
\begin{cor}
	For $g \geq 5$, there exists an infinite dihedral subgroup of $\Mod(S_g)$ that is generated by an involution and a root of a bounding pair map of degree $3$.
\end{cor}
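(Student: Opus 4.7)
My plan is to specialize Proposition~\ref{prop:inf_split_meta} to the parameters $n=3$ and $m=1$. In this case the ambient infinite metacyclic group $\Z\rtimes_{-1}\Z_{2m}$ collapses to $\Z\rtimes_{-1}\Z_2$, the infinite dihedral group, and the multitwist appearing in the proof of that proposition becomes $T_{c_1}T_{c_2}^{-1}$, which is exactly a bounding pair map by the gluing construction (the two curves $c_1,c_2$ cobound the subsurface produced by the $2m=2$-compatibility). Thus the whole task reduces to exhibiting, for each $g\geq 5$, two $D_6$-actions $\langle F_i,G_i\rangle\leq\Mod(S_{g_i})$ with $g_1+g_2+1=g$, each of whose split metacyclic data sets contains a triple of the shape $[(0,1),(a_i,3),3]$.

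For $g=5$, Example~\ref{eg:inf_dihed} already supplies such actions on both copies of $S_2$: the data $D_{F'}=(3,0;((1,3),2),((2,3),2))$ together with the hyperelliptic involution yields a $D_6$-action whose associated split metacyclic data set has the required triples, since the $(1,3)$ and $(2,3)$ fixed points of $\F'$ are swapped in pairs by $\bar{\G}$ (so the corresponding cone points of $\O_H$ contribute triples with $(c_{i1},n_{i1})=(0,1)$ and $(c_{i2},n_{i2})=(a,3)$). Plugging $g_1=g_2=2$ into Proposition~\ref{prop:inf_split_meta} then reproduces exactly the infinite dihedral subgroup of $\Mod(S_5)$ already constructed in Example~\ref{eg:inf_dihed}.

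For $g>5$, I would keep $g_1=2$ (reusing the above action on $S_2$) and set $g_2=g-3\geq 2$. On $S_{g_2}$ the goal is to build a $\Z_3$-action whose data set has the symmetric form described in Subsection~\ref{subsec:dihed}, so that the characterization there yields an involution completing it to a $D_6$-action. A direct Riemann--Hurwitz count shows that $\Z_3$-actions with signature $(g_0;3,\ldots,3)$ and $\ell$ cone points of order $3$ on $S_{g_2}$ exist whenever $\ell=g_2+2-3g_0\geq 0$, so one of the two choices $g_0\in\{0,1\}$ always yields a positive even $\ell$ (use $g_0=0$ for $g_2$ even and $g_0=1$ for $g_2$ odd). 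Pairing the cone points alternately as $(1,3),(2,3)$ gives a symmetric data set, and its associated split metacyclic data set carries a triple $[(0,1),(a,3),3]$ for each such pair, as required.

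The main obstacle is the parity bookkeeping at the end: one must verify that for every $g\geq 5$ the above prescription indeed yields a valid data set (so that Harvey's criterion, Lemma~\ref{Harvey Condition}, produces the $\Z_3$-action) and that the resulting symmetric $D_6$-action genuinely contributes a triple of the right shape to its split metacyclic data set via Proposition~\ref{prop:main}. Both verifications reduce to checking conditions (i)--(iv) of Definition~\ref{defn:meta_cyc_dataset} on the candidate tuple and are essentially the same computation as in the dihedral proposition of Subsection~\ref{subsec:dihed}. Once these are in place, a single application of Proposition~\ref{prop:inf_split_meta} with the data above delivers an infinite dihedral subgroup of $\Mod(S_g)$ generated by an involution and a degree-$3$ root of the bounding pair map $T_{c_1}T_{c_2}^{-1}$.
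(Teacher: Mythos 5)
Your proposal follows the paper's own route: the corollary is deduced there directly from Proposition~\ref{prop:inf_split_meta} with $m=1$, $n=3$ (Example~\ref{eg:inf_dihed} being the genus-$5$ instance), and your construction of suitable $D_6$-actions on $S_{g-3}$ via symmetric $\Z_3$-data sets and the dihedral characterization merely fills in details the paper leaves implicit. The one point you gloss over is the twist exponent in Proposition~\ref{prop:inf_split_meta}: the multitwist is $T_{c_1}^{e}T_{c_2}^{-e}$ with $e\equiv a_1^{-1}+a_2^{-1}$, so to obtain literally a bounding pair map (rather than its square) you should choose the tuples with $a_1=a_2=2$, giving $e\equiv 1\pmod 3$.
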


\section{Hyperbolic structures realizing split metacyclic actions}
\label{sec:hyp_str}

We begin this section by providing an algorithm for obtaining the hyperbolic structures that realize finite split metacyclic subgroups of $\Mod(S_g)$ (up to weak conjugacy) as groups of isometries. 

\begin{enumerate}[\textit{Step} 1.]
\item Consider a weak conjugacy class represented by $(H,(\G,\F))$.

\item Use Theorem~\ref{thm:main} to determine the conjugacy classes $D_F$ (resp. $D_G$) of the generators $F$ (resp. $G$). 

\item We apply Lemma~\ref{lem:fix_metacyclic}, and Theorems~\ref{res:1}-\ref{res:2},  to obtain the hyperbolic structures that realize $H$ as a group of isometries.
\end{enumerate}

\noindent We now describe the geometric realizations of some split metacyclic actions on $S_3$ and $S_5$ represented by the split metacyclic data sets listed in Tables~\ref{tab:split_meta_dsets} and~\ref{tab:split_meta_dsets_s5} in Section~\ref{sec:classify}.

\begin{figure}[htbp]
	\centering
	\labellist
	\tiny
	\pinlabel $\G$ at 170 130
	\pinlabel $\pi$ at 175 110
	\pinlabel $(1,3)$ at -15 50
	\pinlabel $(1,3)$ at 75 18
	\pinlabel $(1,3)$ at 75 83
	\pinlabel $(2,3)$ at 275 55
	\pinlabel $(2,3)$ at 182 18
	\pinlabel $(2,3)$ at 182 83
	\endlabellist
	\includegraphics[width=32ex]{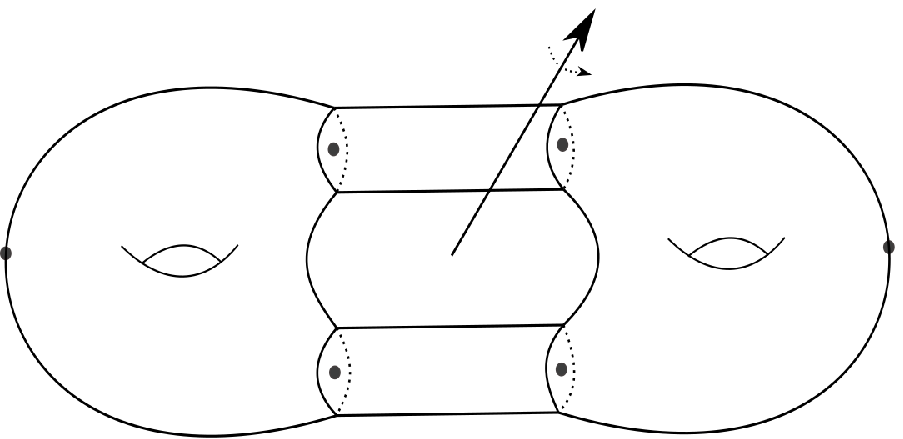}
	\caption{\small A realization of a $D_6$-action $\langle \F, \G \rangle$ on $S_3$, where $D_G = (2,2,1;)$ and $D_F = (3,1;(1,3),(2,3))$. The action $\F$ is realized through two $1$-compatibilities between two actions $F'$ and $F''$ on $S_1$ with $D_{F'} = (3,0;((1,3),3))$ and $D_{F''} = (3,0;((2,3),3)))$. The weak conjugacy class of $(\langle \F, \G\rangle, (\G,\F))$ is encoded by the first split metacyclic data set in Table~\ref{tab:split_meta_dsets}.}
	\label{fig:D6_S3_1}
	\end{figure}
	
\begin{figure}[htbp]
	\centering
	\labellist
	\tiny
	\pinlabel $\G_1$ at 175 130
	\pinlabel $\pi$ at 175 105
	\pinlabel $(1,2)$ at -5 17
	\pinlabel $(1,2)$ at -5 85
	\pinlabel $(1,4)$ at 76 17
	\pinlabel $(1,4)$ at 76 85
	\pinlabel $(1,2)$ at 261 17
	\pinlabel $(1,2)$ at 261 85
	\pinlabel $(3,4)$ at 180 17
	\pinlabel $(3,4)$ at 180 85
	\pinlabel $\G_2$ at 130 140
	\pinlabel $\pi$ at 139 120
	\endlabellist
	\includegraphics[width=32ex]{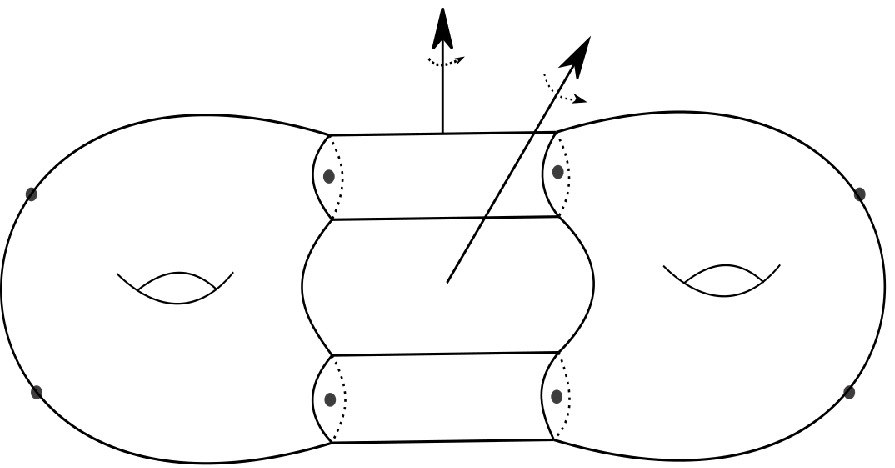}
	\caption{\small The realizations of two distinct $D_8$-actions $\langle \F, \G_1 \rangle$ and $\langle \F , \G_2 \rangle$ on $S_3$, where $D_F = (4,1;((1,2),2))$, $D_{G_1} = (2,2,1;)$, and $D_{G_2} = (2,1;((1,2),4)$. The action $\F$ is realized via two 1-compatibilities between two actions $\F'$ and $\F''$ on $S_1$, where $D_{F'} = (4,0;((1,4),2),(1,2))$ and $D_{F''} = (4,0;((3,4),2),(1,2))$. The weak conjugacy classes of $(\langle \F, \G_1 \rangle, (\G_1,\F))$ and $(\langle \F, \G_2 \rangle, (\G_2,\F))$ are encoded by split metacyclic data sets nos. 3 and 6, respectively, in Table~\ref{tab:split_meta_dsets}.}
	\label{fig:D8_S3_1}
	\end{figure}
	
	 \begin{figure}[htbp]
	\centering
	\labellist
	\tiny
	\pinlabel $(1,3)$ at 115 11
	\pinlabel $(1,3)$ at 115 225
	\pinlabel $\G$ at 218 195
	\pinlabel $\frac{\pi}{2}$ at 210 177
	\pinlabel $(2,3)$ at -15 105
	\pinlabel $(2,3)$ at 240 100
	\pinlabel $(1,3)$ at 180 200
	\pinlabel $(1,3)$ at 50 205
	\pinlabel $(1,3)$ at 180 10
	\pinlabel $(1,3)$ at 50 10
	\pinlabel $(2,3)$ at 5 55
	\pinlabel $(2,3)$ at 220 50
	\pinlabel $(2,3)$ at 5 155
	\pinlabel $(2,3)$ at 220 150
	\endlabellist
	\includegraphics[width=32ex]{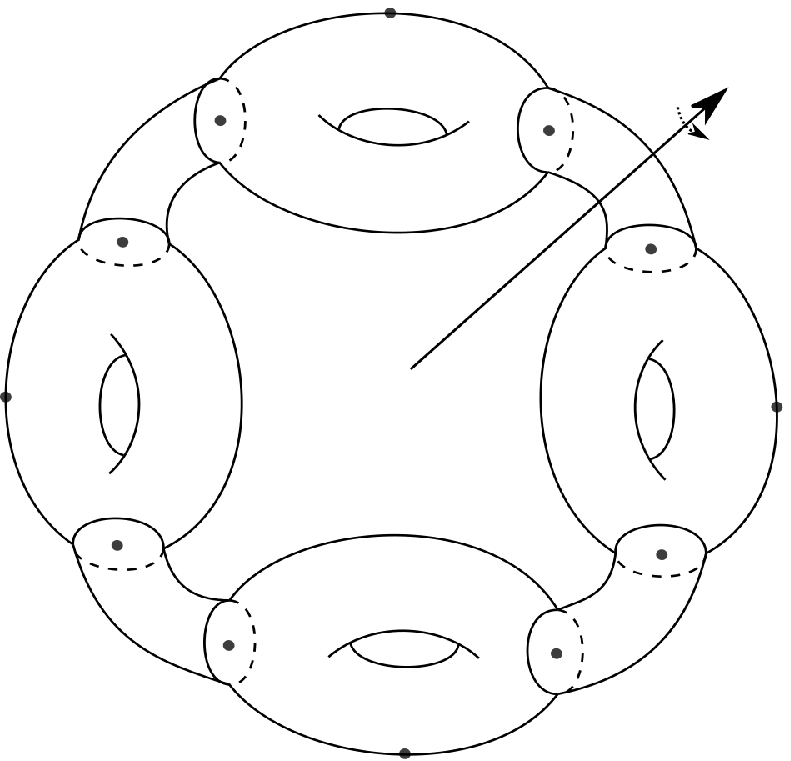}
	\caption{\small A realization of a $\mathbb{Z}_3 \rtimes_{-1} \mathbb{Z}_4$-action $\langle \F, \G \rangle$ on $S_5$, where $D_G = (4,2,1;)$ and $D_F = (3,1;((1,3),2),((2,3),2))$. The action $\F$ is realized via two 1-compatibilities  between the action $\F'$ on two copies of $S_2$ with $D_{F'} = (3,0;((1,3),2),((2,3),2))$. Furthermore, the action $\F'$ is realized by a 1-compatibility between the action $\F''$ and $\F'''$ on $S_1$, where $D_{F''} = (3,0;((1,3),3))$ and $D_{F'''} = (3,0;((2,3),3))$. The weak conjugacy class of $(\langle \F, \G\rangle, (\G,\F))$ is encoded by the split metacyclic data set no. 13 in Table~\ref{tab:split_meta_dsets_s5}.}
	\label{fig:Z3Z4_S5}
	\end{figure}

	 \begin{figure}[H]
	\centering
	\labellist
	\tiny
	\pinlabel $\G_1$ at -8 110
	\pinlabel $\pi$ at 15 124
	\pinlabel $\G_2$ at 128 242
	\pinlabel $\pi$ at 141 221
	\pinlabel $\G_3$ at 252 148
	\pinlabel $\pi$ at 233 125
	\pinlabel $(1,2)$ at 155 185
	\pinlabel $(1,2)$ at 88 190
	\pinlabel $(1,2)$ at 145 140
	\pinlabel $(1,2)$ at 94 140
	\pinlabel $(1,2)$ at 148 70
	\pinlabel $(1,2)$ at 95 70
	\pinlabel $(1,2)$ at 135 8
	\pinlabel $(1,2)$ at 98 10
	\pinlabel $(3,8)$ at 189 148
	\pinlabel $(1,8)$ at 51 148
	\pinlabel $(5,8)$ at 190 62
	\pinlabel $(7,8)$ at 51 62
	\endlabellist
	\includegraphics[width=32ex]{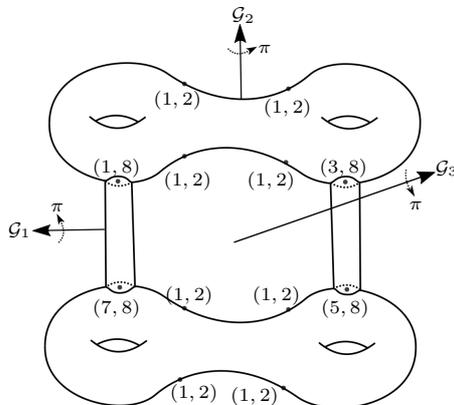}
	\caption{Realization of $\mathbb{Z}_8 \rtimes_{-1} \mathbb{Z}_2$-action $\langle \F, \G_1 \rangle$, $\mathbb{Z}_8 \rtimes_{3} \mathbb{Z}_2$-action $\langle \F, \G_2 \rangle$ and $\mathbb{Z}_8 \rtimes_{5} \mathbb{Z}_2$-action $\langle \F, \G_3 \rangle$ on $S_5$, where $D_{G_1} = D_{G_2} = (2,2;((1,2),4))$, $D_{G_3} = (2,3,1;)$ and $D_F = (8,1;((1,2),2))$. The action $\F$ is realized via two 1-compatibilities  between two actions $\F'$ and $\F''$ on $S_2$ where $D_{F'} = (8,0;(1,2),(1,8),(3,8))$ and $D_{F''} = (8,0;(1,2),(5,8),(7,8))$. The weak conjugacy class of $(\langle \F, \G_i \rangle, (\G_i,\F))$ $1 \leq i \leq 3$ is encoded by the split metacyclic data set nos. 25, 24, and 21, respectively, in Table~\ref{tab:split_meta_dsets_s5}.}
	\label{fig:Z8_Z2_S5}
	\end{figure}

\section{Classification of the weak conjugacy classes in \\ $\Mod(S_3)$ and $\Mod(S_5)$}
\label{sec:classify}

In this section, we will use Theorem~\ref{thm:main} to classify the weak conjugacy classes in $\Mod(S_3)$ and $\Mod(S_5)$. For brevity, we will further assume the following equivalence of the split metacyclic data sets (i.e. the weak conjugacy classes). 
\begin{defn}
\label{defn:eq_data_sets}
Two split metacyclic data sets 
\begin{gather*}
\D=((m \cdot n,k),g_0;[(c_{11},n_{11}),(c_{12},n_{12}),n_1], \cdots , [(c_{\ell 1},n_{\ell 1}),(c_{\ell 2},n_{\ell 2}),n_{\ell} ]) \\ \text{ and } 
\\ \D'=((m \cdot n,k),g_0;[(c'_{11},n'_{11}),(c'_{12},n'_{12}),n'_1], \cdots , [(c'_{\ell 1},n'_{\ell 1}),(c'_{\ell 2},n'_{\ell 2}),n'_{\ell} ])
\end{gather*} 
\noindent are said to be \textit{equivalent} if for each tuple $[(c'_{i1},n'_{i1}),(c'_{i2},n'_{i2}),n'_i]$, there exists a unique tuple $[(c_{j1},n_{j1}),(c_{j2},n_{j2}),n_j]$ satisfying the following conditions:
\begin{enumerate}[(i)]
\item $(c'_{i1},n'_{i1}) = (c_{j1},n_{j1})$,
\item $n'_i = n_j,$ and
\item $c'_{i2} \frac{n}{n'_{i2}} \equiv c_{j2} \frac{n}{n_{j2}} k^{a_i} + b_i (k^{c_{j1} \frac{m}{n_{j1}}} -1) \pmod{n}$ for some $a_i,b_i \in \mathbb{Z}.$
\end{enumerate}
\end{defn}
\noindent Note that equivalent data sets $\D$ and $\D'$ as in Definition~\ref{defn:eq_data_sets} satisfy 
$\D_i' = \D_i$, for $i = 1,2$. We will now provide a classification of the weak conjugacy classes of finite split metacyclic subgroups of $\Mod(S_3)$ and $\Mod(S_5)$ (up to this equivalence) in Tables~\ref{tab:split_meta_dsets} and~\ref{tab:split_meta_dsets_s5}, respectively, 

\begin{landscape}
\begin{table}[htbp]
\small
\begin{center}
    \begin{tabular}{|c|c|c|}
    \hline
      \textbf{Group} & \textbf{Weak conjugacy classes in $\Mod(S_3)$} & \textbf{Cyclic factors $[D_G;D_F]$}\\ \hline
      \multirow{2}{*}{$\Z_3 \rtimes_{-1} \Z_2$} & $((2 \cdot 3,-1),1;[(0,1),(1,3),3])$ & $[(2,2,1;);(3,1;(1,3),(2,3))]$\\ \cline{2-3} 
      & \makecell{$((2 \cdot 3,-1),0;[(1,2),(0,1),2]_3,[(1,2),(1,3),2], [(0,1),(2,3),3])^*$} & $[(2,1;((1,2),4));(3,1;(1,3),(2,3))]$\\
      \hline
       \multirow{4}{*}{$\Z_4 \rtimes_{-1} \Z_2$} & $((2 \cdot 4,-1),1;[(0,1),(1,2),2])$ & $[(2,2,1;);(4,1;((1,2),2))]$\\ \cline{2-3}
        & \makecell{$((2 \cdot 4,-1),0;[(1,2),(0,1),2]_2, [(0,1),(1,4),4], [(0,1),(3,4),4])$} & $[(2,1;((1,2),4));(4,0;((1,4),2),((3,4),2))]$\\ \cline{2-3}
        & \makecell{$((2 \cdot 4,-1),0;[(1,2),(1,4),2]_2, [(0,1),(1,4),4], [(0,1),(3,4),4])$} & $[(2,2,1;);(4,0;((1,4),2),((3,4),2))]$\\ \cline{2-3}
        & \makecell{$((2 \cdot 4,-1),0;[(1,2),(0,1),2]_2,[(1,2),(1,4),2], [(1,2),(3,4),2], [(0,1),(1,2),2])$} & $[(2,1;((1,2),4));(4,1;((1,2),2))]$\\
      \hline
      \multirow{2}{*}{$\Z_3 \rtimes_{-1} \Z_4$} & $((4 \cdot 3,-1),0;[(1,4),(0,1),4],[(1,4),(1,3),4], [(1,2),(2,3),6])$ & $[(4,0;((1,4),2),((1,2),3));(3,1;(1,3),(2,3))]$\\ \cline{2-3}
      & \makecell{$((4 \cdot 3,-1),0;[(3,4),(0,1),4],[(3,4),(1,3),4], [(1,2),(2,3),6])$} & $[(4,0;((3,4),2),((1,2),3));(3,1;(1,3),(2,3))]$\\
      \hline
      \multirow{2}{*}{$\Z_6 \rtimes_{-1} \Z_2$} & $((2 \cdot 6,-1),0;[(1,2),(0,1),2],[(1,2),(1,3),2], [(0,1),(1,2),2], [(0,1),(1,6),6])$ & $[(2,1;((1,2),4);(6,0;((1,2),2),(1,6),(5,6))]$\\ \cline{2-3}
      & \makecell{$((2 \cdot 6,-1),0;[(1,2),(1,6),2],[(1,2),(1,2),2], [(0,1),(1,2),2], [(0,1),(1,6),6])$} & $[(2,2,1;);(6,0;((1,2),2),(1,6),(5,6))]$\\
      \hline
       \multirow{4}{*}{$\Z_4 \rtimes_{-1} \Z_4$} & $((4 \cdot 4,-1),0;[(1,4),(0,1),4],[(0,1),(1,4),4], [(3,4),(1,4),4])^\dagger$ &  $[(4,0;((1,4),2),((1,2),3));(4,0;((1,4),2),((3,4),2))]$ \\ \cline{2-3}
       & \makecell{$((4 \cdot 4,-1),0;[(1,4),(0,1),4],[(1,4),(1,4),4], [(1,2),(3,4),4])^\dagger$} & $[(4,0;((1,4),2),((1,2),3));(4,1;((1,2),2))]$ \\ \cline{2-3}
       & \makecell{$((4 \cdot 4,-1),0;[(3,4),(0,1),4],[(0,1),(1,4),4], [(1,4),(1,4),4])^\dagger$} & $[(4,0;((3,4),2),((1,2),3));(4,0;((1,4),2),((3,4),2))]$ \\ \cline{2-3}
       & \makecell{$((4 \cdot 4,-1),0;[(3,4),(0,1),4],[(3,4),(1,4),4],[(1,2),(3,4),4])^\dagger$} & $[(4,0;((3,4),2),((1,2),3));(4,1;((1,2),2))]$ \\
      \hline
        \multirow{2}{*}{$\Z_8 \rtimes_{5} \Z_2$} & $((2 \cdot 8,5),0;[(1,2),(0,1),2],[(1,2),(7,8),8], [(0,1),(1,8),8])$ & $[(2,1;((1,2),4));(8,0;(1,4),(1,8),(5,8))]$\\ \cline{2-3}
        & \makecell{$((2 \cdot 8,5),0;[(1,2),(0,1),2],[(1,2),(1,8),8], [(0,1),(7,8),8])$} & $[(2,1;((1,2),4));(8,0;(3,4),(3,8),(7,8))]$\\
      \hline
      \multirow{2}{*}{$\Z_7 \rtimes_{2} \Z_3$} & $((3 \cdot 7,2),0;[(1,3),(0,1),3],[(2,3),(6,7),3], [(0,1),(1,7),7])$ & $[(3,1;(1,3),(2,3));(7,0;(1,7),(2,7),(4,7))]$\\ \cline{2-3}
      & \makecell{$((3 \cdot 7,2),0;[(1,3),(0,1),3],[(2,3),(1,7),3], [(0,1),(6,7),7])$} & $[(3,1;(1,3),(2,3));(7,0;(3,7),(6,7),(5,7))]$\\
      \hline
       \multirow{4}{*}{$\Z_{12} \rtimes_{2} \Z_2$} & $((2 \cdot 12,2),0;[(1,2),(0,1),2],[(1,2),(11,12),4], [(0,1),(1,12),12])$ &  $[(2,1;((1,2),4));(12,0;(1,12),(5,12),(1,2))]$ \\ \cline{2-3}
      & \makecell{$((2 \cdot 12,2),0;[(1,2),(0,1),2],[(1,2),(5,12),4], [(0,1),(7,12),12])$} & $[(2,1;((1,2),4));(12,0;(7,12),(11,12),(1,2))]$ \\ \cline{2-3}
      & \makecell{$((2 \cdot 12,2),0;[(1,2),(1,6),2],[(1,2),(1,12),4], [(0,1),(1,12),12])$} & $[(2,2,1;);(12,0;(1,12),(5,12),(1,2))]$ \\ \cline{2-3}
      & \makecell{$((2 \cdot 12,2),0;[(1,2),(1,6),2],[(1,2),(7,12),4], [(0,1),(7,12),12])$} & $[(2,2,1;);(12,0;(7,12),(11,12),(1,2))]$ \\
      \hline
    \end{tabular}
    \caption{The weak conjugacy classes of finite non-abelian split metacyclic subgroups of $\Mod(S_3)$. Note that each data set of type $\dagger$ is quaternionic, and therefore corresponds to the weak conjugacy action of a $Q_{8}$-action on $S_2$.(*The suffix refers to the multiplicity of the tuple in the split metacyclic data set.)}
    \label{tab:split_meta_dsets}
    \end{center}
\end{table}
\end{landscape}

\begin{landscape}
\begin{table}[htbp]
\small
\centering
\resizebox{21cm}{!}
{\begin{tabular}{|c|c|c|}
    \hline
      \textbf{Group} & \textbf{Weak conjugacy classes in $\Mod(S_5)$} & \textbf{Cyclic factors $[D_G;D_F]$}\\ \hline
      \multirow{2}{*}{$\Z_3 \rtimes_{-1} \Z_2$} & $((2 \cdot 3,-1),1;[(0,1),(1,3),3]_2)^*$ & $[(2,3,1;);(3,1;((1,3),2),((2,3),2))]$\\ \cline{2-3} 
      & \makecell{$((2 \cdot 3,-1),0;[(1,2),(0,1),2]_4,[(0,1),(1,3),3]_2)$} & $[(2,2;((1,2),4));(3,1;((1,3),2),((2,3),2))]$\\
      \hline
       \multirow{6}{*}{$\Z_4 \rtimes_{-1} \Z_2$} & $((2 \cdot 4,-1),1;[(1,2),(0,1),2]_2)$ & $[(2,2;((1,2),4));(4,2,1;)]$\\ \cline{2-3}
       & \makecell{$((2 \cdot 4,-1),1;[(1,2),(1,4),2]_2)$} & $[(2,3,1;);(4,2,1;)]$\\ \cline{2-3}
        & \makecell{$((2 \cdot 4,-1),0;[(1,2),(0,1),2]_2,[(0,1),(1,2),2], [(0,1),(1,4),4]_2)$} & $[(2,2;((1,2),4));(4,0;((1,2),2),((1,4),2),((3,4),2))]$\\ \cline{2-3}
        & \makecell{$((2 \cdot 4,-1),0;[(1,2),(1,4),2]_2,[(0,1),(1,2),2], [(0,1),(1,4),4]_2)$} & $[(2,3,1;);(4,0;((1,2),2),((1,4),2),((3,4),2))]$\\ \cline{2-3}
        & \makecell{$((2 \cdot 4,-1),0;[(1,2),(0,1),2]_4,[(1,2),(1,4),2]_2)$} & $[(2,1;((1,2),8));(4,2,1;)]$\\ \cline{2-3}
        & \makecell{$((2 \cdot 4,-1),0;[(1,2),(0,1),2]_2,[(1,2),(1,4),2]_4)$} & $[(2,2;((1,2),4));(4,2,1;)]$\\
      \hline
      \multirow{4}{*}{$\Z_5 \rtimes_{-1} \Z_2$} & $((2 \cdot 5,-1),1;[(0,1),(1,5),5])$ & $[(2,3,1;);(5,1;(1,5),(4,5))]$\\ \cline{2-3}
      & \makecell{$((2 \cdot 5,-1),1;[(0,1),(2,5),5])$} & $[(2,3,1;);(5,1;(2,5),(3,5))]$\\  \cline{2-3}
      & \makecell{$((2 \cdot 5,-1),0;[(1,2),(0,1),2]_3,[(1,2),(4,5),2], [(0,1),(1,5),5])$} & $[(2,2;((1,2),4));(5,1;(1,5),(4,5))]$\\  \cline{2-3}
      & \makecell{$((2 \cdot 5,-1),0;[(1,2),(0,1),2]_3,[(1,2),(3,5),2], [(0,1),(2,5),5])$} & $[(2,2;((1,2),4));(5,1;(2,5),(3,5))]$\\ 
      \hline
      \multirow{3}{*}{$\Z_3 \rtimes_{-1} \Z_4$} & $((4 \cdot 3,-1),1;[(0,1),(1,3),3])$ & $[(4,2,1;);(3,1;((1,3),2),((2,3),2))]$\\ \cline{2-3}
      & \makecell{$((4 \cdot 3,-1),0;[(1,2),(0,1),2],[(0,1),(1,3),3],[(1,4),(0,1),4],[(1,4),(2,3),4])$} & $[(4,0;((1,4),2),((1,2),5));(3,1;((1,3),2),((2,3),2))]$\\ \cline{2-3}
      & \makecell{$((4 \cdot 3,-1),0;[(1,2),(0,1),2],[(0,1),(1,3),3],[(3,4),(0,1),4],[(3,4),(2,3),4])$} & $[(4,0;((3,4),2),((1,2),5));(3,1;((1,3),2),((2,3),2))]$\\
      \hline
      \multirow{4}{*}{$\Z_6 \rtimes_{-1} \Z_2$} & $((2 \cdot 6,-1),1;[(0,1),(1,3),3])$ & $[(2,3,1;);(6,1;(1,3),(2,3))]$\\ \cline{2-3}
      & \makecell{$((2 \cdot 6,-1),0;[(1,2),(0,1),2]_2,[(0,1),(1,6),6],[(0,1),(5,6),6])$} & $[(2,2;((1,2),4));(6,0;((1,6),2),((5,6),2))]$\\ \cline{2-3}
      & \makecell{$((2 \cdot 6,-1),0;[(1,2),(1,6),2]_2,[(0,1),(1,6),6],[(0,1),(5,6),6])$} & $[(2,3,1;);(6,0;((1,6),2),((5,6),2))]$\\ \cline{2-3}
      & \makecell{$((2 \cdot 6,-1),0;[(1,2),(0,1),2],[(1,2),(2,3),2],[(1,2),(1,6),2]_2, [(0,1),(1,3),3])$} & $[(2,2;((1,2),4));(6,1;(1,3),(2,3))]$\\
      \hline
       $\Z_4 \rtimes_{-1} \Z_4$ & $((4 \cdot 4,-1),1;[(0,1),(1,2),2])$ &  $[(4,2,1;);(4,1;((1,2),4))]$ \\ 
      \hline
        \multirow{3}{*}{$\Z_8 \rtimes_{5} \Z_2$} & $((2 \cdot 8,5),1;[(0,1),(1,2),2])$ & $[(2,3,1;);(8,1;((1,2),2))]$\\ \cline{2-3}
        & \makecell{$((2 \cdot 8,5),0;[(1,2),(1,4),4],[(0,1),(1,8),8],[(1,2),(1,8),8])$} & $[(2,3,1;);(8,0;(1,2),(3,4),(1,8),(5,8))]$\\  \cline{2-3}
        & \makecell{$((2 \cdot 8,5),0;[(1,2),(1,4),4],[(0,1),(3,8),8],[(1,2),(3,8),8])$} & $[(2,3,1;);(8,0;(1,2),(1,4),(3,8),(7,8))]$\\
      \hline
      $\Z_8 \rtimes_{3} \Z_2$ & $((2 \cdot 8,3),0;[(1,2),(0,1),2],[(1,2),(1,4),2],[(1,2),(1,8),4],[(1,2),(3,8),4])$ &  $[(2,2;((1,2),4));(8,1;((1,2),2))]$ \\
      \hline
      $\Z_8 \rtimes_{-1} \Z_2$ & $((2 \cdot 8,-1),0;[(1,2),(0,1),2]_2,[(1,2),(1,8),2],[(1,2),(5,8),2],[(0,1),(1,2),2])$ &  $[(2,2;((1,2),4));(8,1;((1,2),2))]$ \\
      \hline
      \multirow{4}{*}{$\Z_5 \rtimes_{-1} \Z_4$} & $((4 \cdot 5,-1),0;[(1,4),(0,1),4],[(1,4),(1,5),4],[(1,2),(4,5),10])$ & $[(4,0;((1,4),2),((1,2),5));(5,1;(2,5),(3,5))]$\\ \cline{2-3}
      & \makecell{$((4 \cdot 5,-1),0;[(1,4),(0,1),4],[(1,4),(2,5),4],[(1,2),(3,5),10])$} & $[(4,0;((1,4),2),((1,2),5));(5,1;(1,5),(4,5))]$\\  \cline{2-3}
      & \makecell{$((4 \cdot 5,-1),0;[(3,4),(0,1),4],[(3,4),(1,5),4],[(1,2),(4,5),10])$} & $[(4,0;((3,4),2),((1,2),5));(5,1;(2,5),(3,5))]$\\ \cline{2-3}
      & \makecell{$((4 \cdot 5,-1),0;[(3,4),(0,1),4],[(3,4),(2,5),4],[(1,2),(3,5),10])$} & $[(4,0;((3,4),2),((1,2),5));(5,1;(1,5),(4,5))]$\\
      \hline
      \multirow{4}{*}{$\Z_{10} \rtimes_{-1} \Z_2$} & $((2 \cdot 10,-1),0;[(1,2),(0,1),2],[(1,2),(1,5),2],[(0,1),(1,2),2],[(0,1),(3,10),10])$ & $[(2,2;((1,2),4));(10,0;((1,2),2),(3,10),(7,10))]$\\ \cline{2-3}
      & \makecell{$((2 \cdot 10,-1),0;[(1,2),(0,1),2],[(1,2),(2,5),2],[(0,1),(1,2),2],[(0,1),(1,10),10])$} & $[(2,2;((1,2),4));(10,0;((1,2),2),(1,10),(9,10))]$\\ \cline{2-3}
      & \makecell{$((2 \cdot 10,-1),0;[(1,2),(1,10),2],[(1,2),(3,10),2],[(0,1),(1,2),2],[(0,1),(3,10),10])$} & $[(2,3,1;);(10,0;((1,2),2),(3,10),(7,10))]$\\ \cline{2-3}
      & \makecell{$((2 \cdot 10,-1),0;[(1,2),(1,10),2],[(1,2),(1,2),2],[(0,1),(1,2),2],[(0,1),(1,10),10])$} & $[(2,3,1;);(10,0;((1,2),2),(1,10),(9,10))]$\\
      \hline
    \end{tabular}}
    \caption{The weak conjugacy classes of finite non-abelian split metacyclic subgroups of $\Mod(S_5)$.(*The suffix refers to the multiplicity of the tuple in the split metacyclic data set.)}
    \label{tab:split_meta_dsets_s5}
\end{table}
\end{landscape}

\begin{landscape}
	\centering
	Continuation of Table 2. \vspace{5mm}
	\begin{table*}[htbp]
		\small
		\centering
		\resizebox{21cm}{!}
		{\begin{tabular}{|c|c|c|}
				\hline
				\textbf{Group} & \textbf{Weak conjugacy classes in $\Mod(S_5)$} & \textbf{Cyclic factors $[D_G;D_F]$}\\ \hline
				\multirow{4}{*}{$\Z_6 \rtimes_{-1} \Z_4$} & $((4 \cdot 6,-1),0;[(1,4),(0,1),4],[(3,4),(1,6),4],[(0,1),(5,6),6])$ & $[(4,0;((1,4),2),((1,2),5));(6,0;((1,6),2),((5,6),2))]$\\ \cline{2-3}
				& \makecell{$((4 \cdot 6,-1),0;[(3,4),(0,1),4],[(1,4),(1,6),4],[(0,1),(5,6),6])$} & $[(4,0;((3,4),2),((1,2),5));(6,0;((1,6),2),((5,6),2))]$\\ \cline{2-3}
				& \makecell{$((4 \cdot 6,-1),0;[(1,4),(0,1),4],[(1,4),(1,6),4],[(1,2),(5,6),6])$} & $[(4,0;((1,4),2),((1,2),5));(6,1;(1,3),(2,3))]$\\ \cline{2-3}
				& \makecell{$((4 \cdot 6,-1),0;[(3,4),(0,1),4],[(3,4),(1,6),4],[(1,2),(5,6),6])$} & $[(4,0;((3,4),2),((1,2),5));(6,1;(1,3),(2,3))]$\\
				\hline
				\multirow{4}{*}{$\Z_{15} \rtimes_{4} \Z_2$} & $((2 \cdot 15,4),0;[(1,2),(0,1),2],[(1,2),(14,15),6],[(0,1),(1,15),15])$ & $[(2,2;((1,2),4));(15,0;(1,15),(4,15),(2,3))]$\\ \cline{2-3}
				& \makecell{$((2 \cdot 15,4),0;[(1,2),(0,1),2],[(1,2),(13,15),6],[(0,1),(2,15),15])$} & $[(2,2;((1,2),4));(15,0;(2,15),(8,15),(1,3))]$\\  \cline{2-3}
				& \makecell{$((2 \cdot 15,4),0;[(1,2),(0,1),2],[(1,2),(8,15),6],[(0,1),(7,15),15])$} & $[(2,2;((1,2),4));(15,0;(7,15),(13,15),(2,3))]$\\  \cline{2-3}
				& \makecell{$((2 \cdot 15,4),0;[(1,2),(0,1),2],[(1,2),(4,15),6],[(0,1),(11,15),15])$} & $[(2,2;((1,2),4));(15,0;(11,15),(14,15),(1,3))]$\\ 
				\hline
				\multirow{8}{*}{$\Z_{20} \rtimes_{9} \Z_2$} & $((2 \cdot 20,9),0;[(1,2),(0,1),2],[(1,2),(19,20),4],[(0,1),(1,20),20])$ & $[(2,2;((1,2),4));(20,0;(1,20),(9,20),(1,2))]$\\ \cline{2-3}
				& \makecell{$((2 \cdot 20,9),0;[(1,2),(1,10),2],[(1,2),(1,20),4],[(0,1),(1,20),20])$} & $[(2,3,1;);(20,0;(1,20),(9,20),(1,2))]$\\ \cline{2-3}
				& \makecell{$((2 \cdot 20,9),0;[(1,2),(0,1),2],[(1,2),(17,20),4],[(0,1),(3,20),20])$} & $[(2,2;((1,2),4));(20,0;(3,20),(7,20),(1,2))]$\\ \cline{2-3}
				& \makecell{$((2 \cdot 20,9),0;[(1,2),(1,10),2],[(1,2),(19,20),4],[(0,1),(3,20),20])$} & $[(2,3,1;);(20,0;(3,20),(7,20),(1,2))]$\\ \cline{2-3}
				& \makecell{$((2 \cdot 20,9),0;[(1,2),(0,1),2],[(1,2),(9,20),4],[(0,1),(11,20),20])$} & $[(2,2;((1,2),4));(20,0;(11,20),(19,20),(1,2))]$\\ \cline{2-3}
				& \makecell{$((2 \cdot 20,9),0;[(1,2),(1,10),2],[(1,2),(11,20),4],[(0,1),(11,20),20])$} & $[(2,3,1;);(20,0;(11,20),(19,20),(1,2))]$\\ \cline{2-3}
				& \makecell{$((2 \cdot 20,9),0;[(1,2),(0,1),2],[(1,2),(7,20),4],[(0,1),(13,20),20])$} & $[(2,2;((1,2),4));(20,0;(13,20),(17,20),(1,2))]$\\ \cline{2-3}
				& \makecell{$((2 \cdot 20,9),0;[(1,2),(1,10),2],[(1,2),(9,20),4],[(0,1),(13,20),20])$} & $[(2,3,1;);(20,0;(13,20),(17,20),(1,2))]$\\
				\hline
				
		\end{tabular}}

	\end{table*}
\end{landscape}

\section*{Acknowledgements}
The first and third authors were supported by the UGC-JRF fellowship. The authors would also like to thank Dr. Siddhartha Sarkar for some helpful discussions.

\bibliographystyle{plain} 
\bibliography{meta_real}
\end{document}